\documentclass[reqno]{amsart}
\usepackage{setspace,tikz,xcolor,mathrsfs,listings,multicol}
\usepackage{amssymb}
\usepackage{rotating}
\usepackage[centertableaux]{ytableau}
\usepackage{enumerate}
\usepackage[all,cmtip]{xy}
\usetikzlibrary{arrows,matrix,positioning}
\tikzset{>=stealth',
  head/.style = {fill = white, text=black},
  plaque/.style = {draw, rectangle, minimum size = 10mm}, 
  pil/.style={->,thick},
  junct/.style = {draw,circle,inner sep=0.5pt,outer sep=0pt, fill=black}
  }

\usepackage[colorlinks=true, pdfstartview=FitV, linkcolor=blue, citecolor=blue, urlcolor=blue]{hyperref}


\newcommand{\G}{\mathfrak{G}}  
\newcommand{\wG}{\mathfrak{J}}  
\newcommand{\hG}{\mathfrak{H}}  
\newcommand{\dG}{\mathfrak{g}}  
\newcommand{\dwG}{\mathfrak{j}}  

\newcommand{\xx}{\mathbf{x}}
\newcommand{\HH}{\mathcal{H}}

\newcommand{\iso}{\cong}
\newcommand{\bbb}{\mathsf{b}}
\newcommand{\gp}{\mathsf{g}}

\newcommand{\abs}[1]{\lvert #1 \rvert}

\newcommand{\fsl}{\mathfrak{sl}}
\newcommand{\gl}{\mathfrak{gl}}
\newcommand{\sym}{S}
\newcommand{\ml}[1]{ \mathbf{\color{darkred} #1} } 

\DeclareMathOperator{\wt}{wt} 
\DeclareMathOperator{\rd}{rd} 
\DeclareMathOperator{\ssyt}{SST} 
\DeclareMathOperator{\svt}{SVT} 
\DeclareMathOperator{\mvt}{MVT} 
\DeclareMathOperator{\hvt}{HVT} 
\DeclareMathOperator{\rpp}{RPP} 
\DeclareMathOperator{\vst}{VST} 
\DeclareMathOperator{\Gr}{Gr} 
\DeclareMathOperator{\RSK}{RSK} 

\newcommand{\ZZ}{\mathbb{Z}}

\newcommand{\CC}{\mathbb{C}}

\newcommand{\mcB}{\mathcal{B}}

\newcommand{\mcF}{\mathcal{F}}

\newcommand{\mcP}{\mathcal{P}}

\newcommand{\bplus}{{\color{blue}+}}
\newcommand{\bminus}{{\color{red}-}}


\usepackage{listings}
\lstdefinelanguage{Sage}[]{Python}
{morekeywords={False,sage,True},sensitive=true}
\lstset{
  frame=single,
  showtabs=False,
  showspaces=False,
  showstringspaces=False,
  commentstyle={\ttfamily\color{dgreencolor}},
  keywordstyle={\ttfamily\color{dbluecolor}\bfseries},
  stringstyle={\ttfamily\color{dgraycolor}\bfseries},
  language=Sage,
  basicstyle={\footnotesize\ttfamily},
  aboveskip=0.75em,
  belowskip=0.75em,
  xleftmargin=.15in,
}
\definecolor{dblackcolor}{rgb}{0.0,0.0,0.0}
\definecolor{dbluecolor}{rgb}{0.01,0.02,0.7}
\definecolor{dgreencolor}{rgb}{0.2,0.4,0.0}
\definecolor{dgraycolor}{rgb}{0.30,0.3,0.30}

\definecolor{darkred}{rgb}{0.7,0,0} 
\newcommand{\defn}[1]{{\color{darkred}\emph{#1}}} 

\usepackage[colorinlistoftodos]{todonotes}

\theoremstyle{plain}
\newtheorem{thm}{Theorem}[section]
\newtheorem{lemma}[thm]{Lemma}

\newtheorem{prop}[thm]{Proposition}
\newtheorem{cor}[thm]{Corollary}
\theoremstyle{definition}
\newtheorem{dfn}[thm]{Definition}
\newtheorem{ex}[thm]{Example}
\newtheorem{remark}[thm]{Remark}

\numberwithin{equation}{section}

\begin{document}
\title[Crystal structures for canonical Grothendiecks]{Crystal structures for canonical Grothendieck functions}

\author[G.~Hawkes]{Graham Hawkes}
\address[G.~Hawkes]{Department of Mathematics, University of California, Davis, One Shields Avenue, Davis, CA 95616, USA}
\email{hawkes@math.ucdavis.edu}
\urladdr{https://www.math.ucdavis.edu/~hawkes/}

\author[T.~Scrimshaw]{Travis Scrimshaw}
\address[T.~Scrimshaw]{School of Mathematics and Physics, The University of Queensland, St.\ Lucia, QLD 4072, Australia}
\email{tcscrims@gmail.com}
\urladdr{https://people.smp.uq.edu.au/TravisScrimshaw/}

\keywords{canonical Grothendieck function, crystal, quantum group, multiset-valued tableau, hook-valued tableau, valued-set tableau}
\subjclass[2010]{05E05, 05A19, 14M15, 17B37}

\thanks{TS was partially supported by the Australian Research Council DP170102648.}

\begin{abstract}
We give a $U_q(\mathfrak{sl}_n)$-crystal structure on multiset-valued tableaux, hook-valued tableaux, and valued-set tableaux, whose generating functions are the weak symmetric, canonical, and dual weak symmetric Grothendieck functions, respectively.
We show the result is isomorphic to a (generally infinite) direct sum of highest weight crystals, and for multiset-valued tableaux and valued-set tableaux, we provide an explicit bijection.
As a consequence, these generating functions are Schur positive; in particular, the canonical Grothendieck functions, which was not previously known.
We also give an extension of Hecke insertion to express a dual stable Grothendieck function as a sum of Schur functions.
\end{abstract}

\maketitle

\section{Introduction}
\label{sec:introduction}

The Grassmannian $\Gr(n, k)$ is the set of $k$-dimensional hyperplanes in $\CC^n$. Lascoux and Sch\"utzenberger~\cite{LS82,LS83} introduced Grothendieck polynomials to represent the K-theory ring of the Grassmannian. In particular, they correspond to the K-theory classes of structure sheaves of Schubert varieties, and so they are indexed by permutations in $\sym_n$. By taking the stable limit of $n \to \infty$, Fomin and Kirillov~\cite{FK94,FK96} initiated the study of stable Grothendieck functions, where they also replaced the sign corresponding to the degree by a parameter $\beta$ (which corresponds to taking the connective K-theory~\cite{Hudson}).
Stable Grothendieck functions have been well-studied using a variety of methods; see for example~\cite{BKSTY08,BM12,BS16,Buch02,GMPPRST16,IIM17,IN09,IS14,Iwao19,LamPyl07,Lenart00,Monical16,MS13,MS14,MPS18,MPS18II,PP16,PS18,PS19,PY:genomic,PY18,TY09,WZJ16} and references therein.

The subset of stable Grothendieck functions corresponding to Grassmannian permutations are called symmetric Grothendieck functions and form a basis for (an appropriate completion of) the ring of symmetric functions over $\ZZ[\beta]$.
Recall that Schur functions correspond to the characters of the general-linear Lie algebra $\gl_n$ when restricted to $n$ variables.
Symmetric Grothendieck functions $\G_{\lambda}$ are known to be Schur positive~\cite{Lenart00} with a finite expansion in each degree $\beta$, and so we can apply the involution $\omega$ that sends a Schur function $s_{\mu}$ to the Schur function $s_{\mu'}$ of the conjugate $\mu'$ of $\mu$.
The resulting basis is known as the weak stable Grothendieck functions $\wG_{\lambda}$.
Since the basis of Grothendieck functions is a (upper) filtered basis, we can consider its Hopf dual basis under the Hall inner product, which can be defined by considering Schur functions as an orthonormal basis, called the dual symmetric Grothendieck functions and denoted by $\dG_{\lambda}$.
By also applying $\omega$, we obtain the dual weak symmetric Grothendieck functions $\dwG_{\lambda}$.
Furthermore, each of the above families are known to have combinatorial interpretations:
\begin{itemize}
\item symmetric Grothendieck functions using set-valued tableaux,
\item weak symmetric Grothendieck functions using multiset-valued tableaux,
\item dual symmetric Grothendieck functions using reverse plane partitions, and
\item dual weak symmetric Grothendieck polynomials using valued-set tableaux.
\end{itemize}

In an effort to unify the bases $\{\G_{\lambda}\}_{\lambda}$ and $\{\wG_{\lambda}\}_{\lambda}$ by constructing a basis invariant under $\omega$, Yeliussizov introduced in~\cite{Yel17} the canonical Grothendieck functions $\hG_{\lambda}$ and fused the corresponding combinatorics in hook-valued tableaux. Furthermore, up to a coefficient of $(\alpha + \beta)$, the canonical Grothendieck functions have the same structure coefficients and coproduct as the symmetric Grothendeick functions. Similarly, he defined the dual canonical Grothendieck functions as the corresponding dual basis and described it combinatorially using rim border tableaux and showed they are Schur positive.

Since a Schur function restricted to $n$ variables is a character of the special-linear Lie algebra $\fsl_n$, a generating function of some set $\mathcal{B}$ that is Schur positive implies that there should be a $U_q(\fsl_n)$-crystal structure~\cite{K90,K91} on $\mathcal{B}$ with each connected component isomorphic to the highest weight crystal $B(\lambda)$ for every $s_{\lambda}$ summand.
Indeed, this was done for symmetric Grothendieck functions~\cite{MPS18} and for dual symmetric Grothendieck functions~\cite{Galashin17}.
Thus, a natural question is to construct such crystals on multiset-valued tableaux, hook-valued tableaux, valued-set tableaux, and rim border tableaux.
In this paper, we construct such a $U_q(\fsl_n)$-crystal structure on the first three combinatorial objects: multiset-valued tableaux, hook-valued tableaux, and valued-set tableaux.

Furthermore, we show that multiset-valued tableaux have many analogous results from~\cite{MPS18} for set-valued tableaux. More specifically, we extend the notion of the uncrowding crystal isomorphism from~\cite[Sec.~6]{Buch02} to an explicit crystal isomorphism from mutliset-valued tableaux to the usual crystal on semistandard tableaux. Furthermore, we extend Hecke insertion~\cite{BKSTY08} to give a crystal structure on weakly decreasing factorizations and give a positive Schur expansion of general weak stable Grothendieck functions. We also have chosen our reading word on multiset-valued tableaux so that it is a crystal embedding.

Conversely, whenever we have a crystal structure on a set $\mathcal{B}$, the corresponding generating function is Schur positive. Our final result is constructing a $U_q(\fsl_n)$-crystal structure on hook-valued tableaux, which immediately implies that the canonical Grothendieck functions are Schur positive. It was not previously known that the canonical Grothendieck functions are Schur positive.

Our crystal structure on hook-valued tableaux is a combination of the crystal structures on set-valued tableaux and multiset-valued tableaux. However, we are not able to provide an explicit isomorphism with a highest weight crystal and instead must rely on the Stembridge axioms~\cite{Stembridge03}. Indeed, the set-valued (resp.\ multset-valued) tableaux crystal structure preserves rows (resp.\ columns), each of which is isomorphic to hook shape, and so the crystal structures are incompatible with no straightforward extension of uncrowding.

In addition to the crystal structure on valued-set tableaux (which we describe using two different reading words), we provide an analog of uncrowding that we call inflation. That is we give an explicit crystal isomorphism from valued-set tableaux to the usual crystal on semistandard tableaux that is based on~\cite[Thm.~9.8]{LamPyl07}. As with uncrowding, inflation is based on the Robinson--Schensted--Knuth (RSK) algorithm being a crystal isomorphism and recording the difference between the shapes.

Since dual canonical Grothendieck are Schur positive~\cite[Thm.~9.8]{Yel17}, there should exist a $U_q(\fsl_n)$-crystal structure on rim border tableaux with an additional marking of all interior boxes by either $\alpha$ or $\beta$ as the exponent of $(\alpha + \beta)$ corresponds to the number of interior boxes. However, the crystal structure appears to be more complicated than simply combining the crystal structures on reverse plane partitions and valued-set tableaux. Thus, it remains an open problem to construct a $U_q(\fsl_n)$-crystal on marked rim border tableaux. Moreover, it is unlikely that inflation will extend to marked rim border tableaux.

This paper is organized as follows.
In Section~\ref{sec:background}, we provide the necessary background.
In Section~\ref{sec:crystal_MVT}, we give a crystal structure on multiset-valued tableaux, the uncrowding map, and a variation of Hecke insertion for weak stable Grothendieck polynomials.
In Section~\ref{sec:hooks}, we construct a crystal structure on hook-valued tableaux.
In Section~\ref{sec:vst}, we construct a crystal structure on valued-set tableaux and give the inflation map.

\subsection*{Acknowledgements}
The authors would like to thank Rebecca Patrias, Oliver Pechenik, and Damir Yeliussizov for useful discussions and Anne Schilling for comments on an earlier version of this manuscript.
This work benefited from computations using \textsc{SageMath}~\cite{sage, combinat}.

\section{Background}
\label{sec:background}

In this section, we give the necessary background on the crystal structure on set-valued tableaux and on (weak) symmetric/canonical Grothendieck functions.
We use English convention for partitions and tableaux.
Let $\xx = (x_1, x_2, x_3, \ldots)$ be a countable sequence of indeterminants.
Let $\fsl_n$ denote the special linear Lie algebra (\textit{i.e.}, the simple Lie algebra of type $A_{n-1}$) over $\CC$ and $U_q(\fsl_n)$ the corresponding Drinfel'd--Jimbo quantum group.
Let $\lambda = (\lambda_1, \lambda_2, \dotsc, \lambda_{\ell})$ be a partition; a sequence of weakly decreasing positive integers.
Let $\ell(\lambda) = \ell$ denote the length of $\lambda$.

\subsection{Semistandard tableaux, set-valued tableaux, and crystals}

A \defn{(semistandard) set-valued tableau of shape $\lambda$} is a filling $T$ of the boxes of Young diagram of $\lambda$ by finite nonempty sets of positive integers so that rows are weakly increasing and columns are strictly increasing in the following sense:
For every set $A$ to the left of a set $B$ in the same row, we have $\max A \leq \min B$, and for $C$ below $A$ in the same column, we have $\max A < \min C$.
A set-valued tableau is a \defn{semistandard (Young) tableau} if all sets have size $1$.
Let $\svt^n(\lambda)$ (resp.~$\ssyt^n(\lambda)$) denote the set of set-valued (resp.\ semistandard) tableaux of shape $\lambda$ with entries at most $n$.

In~\cite{MPS18}, a $U_q(\fsl_n)$-crystal structure, in the sense of Kashiwara~\cite{K90,K91}, was given on $\svt^n(\lambda)$.
Recalling this crystal structure, we begin with the \defn{crystal operators} $e_i, f_i \colon \svt^n(\lambda) \to \svt^n(\lambda) \sqcup \{0\}$, where $i \in I := \{1, \dotsc, n-1\}$.

\begin{dfn}
\label{defn:svt_crystal_ops}
Fix some $T \in \svt^n(\lambda)$ and $i \in I$.
Write $\bplus$ above each column of $T$ containing $i$ but not $i+1$, and write $\bminus$ above each column containing $i+1$ but not $i$. Next cancel signs in ordered pairs $\bminus \bplus$ until obtaining a sequence of the form $\bplus \cdots \bplus \bminus \cdots \bminus$ called the \defn{$i$-signature}.
\begin{description}
\item[\defn{$e_i T$}] If there is not a $\bminus$ in the resulting sequence, then $e_i T = 0$. Otherwise let $\bbb$ correspond to the box of the leftmost uncanceled $\bminus$. Then $e_i T$ is given by one of the following:
\begin{itemize}
\item if there exists a box $\bbb^{\leftarrow}$ immediately to the left of $\bbb$ that contains an $i+1$, then remove the $i+1$ from $\bbb^{\leftarrow}$ and add an $i$ to $\bbb$;
\item otherwise replace the $i+1$ in $\bbb$ with an $i$.
\end{itemize}

\item[\defn{$f_i T$}] If there is not a $\bplus$ in the resulting sequence, then $f_i T = 0$. Otherwise let $\bbb$ correspond to the box of the rightmost uncanceled $\bplus$. Then $f_i T$ is given by one of the following:
\begin{itemize}
\item if there exists a box $\bbb^{\rightarrow}$ immediately to the right of $\bbb$ that contains an $i$, then remove the $i$ from $\bbb^{\rightarrow}$ and add an $i+1$ to $\bbb$;
\item otherwise replace the $i$ in $\bbb$ with an $i+1$.
\end{itemize}
\end{description}
\end{dfn}

For a set-valued tableau $T \in \svt^n(\lambda)$ the \defn{weight} is defined as
\begin{equation}
\label{eq:weight_defn}
\wt(T) := x_1^{m_1} x_2^{m_2} \cdots x_n^{m_n},
\end{equation}
where $m_i$ is the number of occurrences of $i$ in $T$.
Denote $\abs{T} := \sum_{i=1}^n m_i$.
Define the statistics
\[
\varepsilon_i(T) = \max \{k \in \ZZ_{\geq 0} \mid e_i^k T \neq 0\},
\qquad\quad
\varphi_i(T) = \max \{k \in \ZZ_{\geq 0} \mid f_i^k T \neq 0\}.
\]
This gives a $U_q(\fsl_n)$-crystal structure on $\svt^n(\lambda)$.\footnote{The standard references for crystal structures consider the weight as an additive group, but we consider it as a multiplicative group because it useful for defining polynomials in the sequel.}
In particular, for $T, T' \in \svt^n(\lambda)$, we have
\[
e_i T = T' \Longleftrightarrow T = f_i T'.
\]
We say a $T \in \svt^n(\lambda)$ is \defn{highest weight} if $e_i T = 0$ for all $i \in i$.
For more details on crystals, we refer the reader to~\cite{BS17,K91}.

When we restrict this crystal structure to semistandard Young tableaux of shape $\lambda$, we exactly recover the crystal $B(\lambda)$ of the irreducible highest weight $U_q(\fsl_n)$-representation of highest weight $\lambda$~\cite{K90,K91}. Furthermore, the crystal operators from Definition~\ref{defn:svt_crystal_ops} also give a crystal structure on words of length $\ell$, which we naturally equate with the tensor product $B(\Lambda_1)^{\otimes \ell}$ (for more details, we refer the reader to~\cite{BS17}).
The Lusztig involution is an involution on highest weight crystals $\ast \colon B(\lambda) \to B(\lambda)$ that sends the highest weight element to the lowest weight element and extended as a crystal isomorphism
\[
e_i(T^*) \mapsto (f_{n+1-i}T)^*,
\qquad
f_i(T^*) \mapsto (e_{n+1-i}T)^*,
\qquad
\wt(T^*) = w_0 \wt(T),
\]
where $w_0$ is the permutation that reverses all entries. We extend this functorially to tensor products by applying it to every factor and then reversing the factors. The Lusztig involution is also given by the Sch\"utzenberger involution (or evacuation) on semistandard tableaux~\cite{Lenart07}.

Recall that for two $U_q(\fsl_n)$-crystals $\mcB, \mcB'$, a \defn{strict crystal morphism} $\psi \colon \mcB \to \mcB'$ is a map $\psi \colon \mcB \sqcup \{0\} \to \mcB \sqcup \{0\}$ such that
\[
\psi(0) = 0,
\qquad
\psi(e_i b) = e_i \psi(b),
\qquad
\psi(f_i b) = f_i \psi(b),
\qquad
\wt\bigl( \psi(b) \bigr) = \wt(b),
\]
where we consider $e_i 0 = 0$ and $f_i 0 = 0$.
We say $\psi$ is an \defn{embedding} (resp.\ \defn{isomorphism}) if $\psi^{-1}(0) = \{0\}$ (resp. $\psi$ is a bijection).
When there exists an isomorphism $\psi \colon \mcB \to \mcB'$, we say $\mcB$ is isomorphic to $\mcB'$ and denote this by $\mcB \iso \mcB'$.

\begin{thm}[{\cite[Thm.~3.9]{MPS18}}]
\label{thm:svt_crystal}
Let $\lambda$ be a partition. Then
\[
\svt^n(\lambda) \iso \bigoplus_{\lambda \subseteq \mu} B(\mu)^{\oplus S_{\lambda}^{\mu}},
\]
where the $S_{\lambda}^{\mu}$ is the ghest weight elements of weight $\mu$ in $\svt^n(\lambda)$.
\end{thm}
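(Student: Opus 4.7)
The plan is to reduce to the semisimple decomposition of tensor powers of the vector representation by exhibiting a reading word on $\svt^n(\lambda)$ that is a strict $U_q(\fsl_n)$-crystal morphism. Concretely, I would define a column reading word
\[
\rd \colon \svt^n(\lambda) \to \bigsqcup_{k \geq \abs{\lambda}} B(\Lambda_1)^{\otimes k}
\]
by listing the boxes of $T$ column by column from left to right, within each column from bottom to top, and within each box in weakly decreasing order.

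The key technical step is to verify that $\rd$ intertwines the crystal operators of Definition~\ref{defn:svt_crystal_ops} with the tensor-product crystal operators. Restricted to the letters $\{i, i+1\}$, the block of $\rd(T)$ produced by a single column of $T$ is a (possibly empty) run of $i+1$'s followed by a run of $i$'s, because across boxes the column is strictly increasing and within each box the entries are listed decreasingly. Such a block collapses under the $\bminus \bplus$ cancellation rule to at most one sign, which is exactly the sign Definition~\ref{defn:svt_crystal_ops} assigns to that column. Concatenating over all columns, the reduced $i$-signature of $\rd(T)$ therefore coincides with the column $i$-signature of $T$, so the rightmost uncancelled $\bplus$ of $\rd(T)$ lies in the block coming from the distinguished box $\bbb$ of the definition. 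A case analysis on whether the right neighbour $\bbb^{\rightarrow}$ of $\bbb$ contains an $i$ then matches the tensor action of $f_i$ with the two sub-cases of Definition~\ref{defn:svt_crystal_ops} (replace the $i$ in $\bbb$ in place, or slide it into $\bbb^{\rightarrow}$); the argument for $e_i$ is symmetric.

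Once $\rd$ is known to be a strict crystal morphism, every connected component of $\svt^n(\lambda)$ maps into a single connected component of the target, and each connected component of $\bigsqcup_{k} B(\Lambda_1)^{\otimes k}$ is isomorphic to some $B(\mu)$ by the standard semisimple decomposition. Comparing $\varepsilon_i$ and $\varphi_i$ on the source and target identifies each source component with its target $B(\mu)$, so $\svt^n(\lambda)$ decomposes as a direct sum of highest weight crystals, with $B(\mu)$ occurring once for each highest weight element of weight $\mu$ — that is, with multiplicity $S_\lambda^\mu$. The containment $\mu \supseteq \lambda$ in the indexing set can be established separately via Buch's uncrowding~\cite{Buch02}, which realises any highest weight $T \in \svt^n(\lambda)$ as a semistandard tableau of shape $\mu \supseteq \lambda$ of the same weight. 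The main obstacle is the case analysis in the second paragraph: one must verify carefully that in every configuration the rightmost uncancelled $\bplus$ in the reading word sits in the block corresponding to the box $\bbb$ singled out by Definition~\ref{defn:svt_crystal_ops}, and that repacking the modified word back into the boxes of $\lambda$ reproduces exactly the ``replace or slide'' action — this is delicate because the block boundaries of $\rd(T)$ are not determined by the underlying word alone.
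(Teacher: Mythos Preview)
The paper does not supply its own proof of this theorem; it is quoted from \cite[Thm.~3.9]{MPS18} as background. That said, your proposed reading-word approach has a genuine gap.

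Your claim that the reduced $i$-signature of $\rd(T)$ coincides with the column $i$-signature of Definition~\ref{defn:svt_crystal_ops} is correct, so $\varepsilon_i$ and $\varphi_i$ are preserved. But the assertion that $\rd$ intertwines the crystal operators fails precisely in the ``slide'' case. Take $\lambda = (2)$, $n=2$, and let $T$ be the single-row set-valued tableau with entries $\{1\},\{1,2\}$. Then $\rd(T) = 1\,2\,1$; the rightmost uncancelled $\bplus$ is the leading $1$, so $f_1\rd(T) = 2\,2\,1$. On the other hand Definition~\ref{defn:svt_crystal_ops} acts on the left box $\bbb$; since $\bbb^{\rightarrow}$ contains a $1$, we remove it and add a $2$ to $\bbb$, so $f_1 T$ has entries $\{1,2\},\{2\}$ and $\rd(f_1 T) = 2\,1\,2 \neq 2\,2\,1$. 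Hence $\rd$ is not a strict crystal morphism, and the argument that each connected component of $\svt^n(\lambda)$ embeds into a single $B(\mu)$ does not go through as written. (The two words are Knuth-equivalent, so one could try to repair the approach by composing with $\RSK$, but that is a different argument and needs its own justification.)

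For comparison, the actual proof in \cite{MPS18} proceeds row by row rather than column by column: a single row of a set-valued tableau is shown to be isomorphic to a highest weight crystal, and $\svt^n(\lambda)$ then sits inside a tensor product of its rows. The present paper alludes to this after Theorem~\ref{thm:mvt_crystal}, and in the remark following Lemma~\ref{lemma:fixed_reading} notes that a \emph{different} reading word---the one from \cite[Def.~2.5]{BM12}---does satisfy the required intertwining property for set-valued tableaux; the reading word you wrote down is not that one.
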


For a partition $\lambda$, we will sometimes write it as $\sum_{i=1}^n c_i \Lambda_i$, where $c_i$ denotes the number of columns of height $i$.
This is the usual identification of partitions with the dominant weights associated to $\fsl_n$ using the fundamental weights.

See Figure~\ref{fig:svt_crystal_ex} for an example of the crystal structure on set-valued tableaux.

\begin{figure}
\[
\ytableausetup{boxsize=2.0em}
\begin{tikzpicture}[>=latex,scale=2,every node/.style={scale=0.7},baseline=0]
\node (m) at (0,0) {$\ytableaushort{11,22}$};
\node (f2m) at (0,-1) {$\ytableaushort{11,23}$};
\node (f22m) at (1,-2) {$\ytableaushort{11,33}$};
\node (f12m) at (-1,-2) {$\ytableaushort{12,23}$};
\node (f122m) at (0,-3) {$\ytableaushort{12,33}$};
\node (f1122m) at (0,-4) {$\ytableaushort{22,33}$};
\draw[->,red] (m) -- node[midway,right] {\small $2$} (f2m);
\draw[->,red] (f2m) -- node[midway,above right] {\small $2$} (f22m);
\draw[->,blue] (f2m) -- node[midway, above left] {\small $1$} (f12m);
\draw[->,red] (f12m) -- node[midway,below left] {\small $2$} (f122m);
\draw[->,blue] (f22m) -- node[midway, below right] {\small $1$} (f122m);
\draw[->,blue] (f122m) -- node[midway, right] {\small $1$} (f1122m);
\node (t) at (2,0) {$\ytableaushort{11,2{2,\!3}}$};
\node (f2t) at (2,-1) {$\ytableaushort{11,{2,\!3}3}$};
\node (f12t) at (2,-2) {$\ytableaushort{12,{2,\!3}3}$};
\draw[->,red] (t) -- node[midway,right] {\small $2$} (f2t);
\draw[->,blue] (f2t) -- node[midway, right] {\small $1$} (f12t);
\node (b) at (3,0) {$\ytableaushort{1{1,\!2},23}$};
\node (f2b) at (3,-1) {$\ytableaushort{1{1,\!2},33}$};
\node (f12b) at (3,-2) {$\ytableaushort{{1,\!2}2,33}$};
\draw[->,red] (b) -- node[midway,right] {\small $2$} (f2b);
\draw[->,blue] (f2b) -- node[midway, right] {\small $1$} (f12b);
\node (all) at (4,0) {$\ytableaushort{1{1,\!2},{2,\!3}3}$};
\end{tikzpicture}
\]
\ytableausetup{boxsize=0.8em}
\caption{The $U_q(\fsl_3)$-crystal structure on $\svt^3\left( \ydiagram{2,2} \right)$.}
\label{fig:svt_crystal_ex}
\end{figure}

\subsection{Canonical Grothendieck functions}

From~\cite{Buch02}, we can define a \defn{symmetric Grothendieck function} as
\[
\G_{\lambda}(\xx; \beta) := \sum_{T \in \svt^{\infty}(\lambda)} \beta^{\abs{T}-\abs{\lambda}} \wt(T),
\]
where $\abs{\lambda}$ denotes the size of $\lambda$ (\textit{i.e.}, the number of boxes in $\lambda$).
The value $\abs{T} - \abs{\lambda}$ is the so-called \defn{excess} statistic.
When $\beta = 0$, we recover the \defn{Schur function}:
\[
s_{\lambda}(\xx) = \sum_{T \in \ssyt^{\infty}(\lambda)} \wt(T).
\]
Note that when we restrict to $n$ variables $x_1, x_2, \dotsc, x_n$, we recover the \defn{$\beta$-character} of $\svt^n(\lambda)$ and $\G_{\lambda}$ (and $s_{\lambda}$) is a polynomial (as opposed to a formal power series).
For more on Schur functions, we refer the reader to~\cite[Ch.~7]{ECII}.

The \defn{weak symmetric Grothendieck function} is defined by
\begin{equation}
\label{eq:weak_definition}
\wG_{\lambda}(\xx; \alpha) := \G_{\lambda}\left( \frac{x_1}{1-\alpha x_1}, \frac{x_2}{1-\alpha x_2}, \frac{x_3}{1-\alpha x_3}, \ldots; \alpha \right),
\end{equation}
which recovers the definition given in~\cite[Thm.~6.11]{PP16} when $\alpha = -1$ and $x_i \mapsto -x_i$ (which is for the conjugate shape of the definition in~\cite{LamPyl07}).
Indeed, following~\cite{PP16} we have
\[
\frac{x_i}{1 - \alpha x_i} = x_i + \alpha x_i^2 + \alpha^2 x_i^3 + \cdots = \sum_{k=0}^{\infty} \alpha^k x_i^{k+1},
\]
which is equivalent to allowing multisets to fill the tableaux.
More explicitly, define a \defn{(semistandard) multiset-valued tableau of shape $\lambda$} to be a filling $T$ of the boxes of $\lambda$ by finite nonempty multisets of positive integers such that rows are weakly increasing and columns are strictly increasing in the same sense as for set-valued tableaux.
Let $\mvt^n(\lambda)$ denote the set of all multiset-valued tableaux of shape $\lambda$ and max entry $n$.
Thus, we arrive at the combinatorial definition of~\cite{LamPyl07} for weak symmetric Grothendieck functions:
\[
\wG_{\lambda}(\xx; \alpha) = \sum_{T \in \mvt^{\infty}(\lambda)} \alpha^{\abs{T}-\abs{\lambda}} \wt(T).
\]
A third equivalent way to define a weak symmetric Grothendieck function is by using the involution $\omega$ on symmetric functions given by $\omega s_{\lambda}(\xx) = s_{\lambda'}(\xx)$, where $\lambda'$ is the conjugate partition of $\lambda$.

\begin{prop}[{\cite[Prop.~9.22]{LamPyl07}}]
\label{prop:conjugate_defn}
We have
\[
\wG_{\lambda'}(\xx; \alpha) = \omega \G_{\lambda}(\xx; \alpha).
\]
\end{prop}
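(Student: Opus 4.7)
The plan is to reduce the identity to a comparison of Schur expansions. First, by Lenart's theorem~\cite{Lenart00}, the symmetric Grothendieck function is Schur positive, so
\[
\G_\lambda(\xx;\beta) = \sum_{\mu \supseteq \lambda} \beta^{\abs{\mu}-\abs{\lambda}}\, g_{\lambda\mu}\, s_\mu(\xx),
\]
with nonnegative integer coefficients $g_{\lambda\mu}$ counting elegant skew tableaux of shape $\mu/\lambda$. Applying the involution $\omega$ (which sends $s_\mu \mapsto s_{\mu'}$) and then re-indexing $\nu := \mu'$ (a bijection from $\{\mu : \mu \supseteq \lambda\}$ to $\{\nu : \nu \supseteq \lambda'\}$ that preserves size), one obtains
\[
\omega \G_\lambda(\xx;\alpha) = \sum_{\nu \supseteq \lambda'} \alpha^{\abs{\nu}-\abs{\lambda'}}\, g_{\lambda,\nu'}\, s_\nu(\xx).
\]

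Next, I would produce a matching Schur expansion of $\wG_{\lambda'}$ directly from its multiset-valued combinatorial definition. Invoking the uncrowding construction developed in Section~\ref{sec:crystal_MVT}, each element of $\mvt^\infty(\lambda')$ corresponds uniquely to a pair consisting of a semistandard tableau of some shape $\nu \supseteq \lambda'$ together with auxiliary recording data. Summing first over the recording data with $\nu$ fixed yields
\[
\wG_{\lambda'}(\xx;\alpha) = \sum_{\nu \supseteq \lambda'} \alpha^{\abs{\nu}-\abs{\lambda'}}\, h_{\lambda',\nu}\, s_\nu(\xx),
\]
where $h_{\lambda',\nu}$ counts the possible recording data for each shape $\nu$.

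Matching coefficients then reduces the proposition to the combinatorial identity $h_{\lambda',\nu} = g_{\lambda,\nu'}$. I would establish this by constructing an explicit bijection between uncrowding recording data from shape $\lambda'$ to shape $\nu$ and elegant skew fillings of shape $\nu'/\lambda$, essentially by conjugating shapes and reinterpreting column-multiplicities of the multiset-valued tableau as elegant row-entries of the conjugate shape.

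The main obstacle is verifying this final bijection, since the entry bounds defining elegant tableaux (entries in row $i$ bounded above by $i-1$) must correspond exactly to the constraints forced by column-strictness and row-weakness of a multiset-valued semistandard filling after transposing. If this turns out to be delicate, a fully algebraic alternative is to bypass combinatorics: expand $s_\mu\bigl(x_1/(1-\alpha x_1), x_2/(1-\alpha x_2), \ldots\bigr)$ into ordinary Schur functions via a standard generating-function computation, combine with~\eqref{eq:weak_definition} and Lenart's expansion of $\G_{\lambda'}$, and verify termwise that the result agrees with the expansion of $\omega \G_\lambda$ above. This latter route is quicker but less transparent combinatorially.
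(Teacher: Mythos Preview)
Your proposal is correct and matches the paper's own argument essentially line for line. In the paper, the proposition is first stated as a citation in the background section, but a self-contained proof is given immediately after Theorem~\ref{thm:flagged_decomposition}: Theorem~\ref{thm:flagged_decomposition} identifies your ``recording data'' $h_{\lambda',\nu}$ as the cardinality $\lvert\mcF^c_{\nu/\lambda'}\rvert$ of column-flagged increasing tableaux, and the paper then simply observes that these are the conjugates of Lenart's row-flagged increasing tableaux (your $g_{\lambda,\nu'}$), which is exactly the bijection you describe. So the ``main obstacle'' you flag is in fact a one-line conjugation of shapes, not a delicate verification; the entry bound ``column $i$ strictly less than $i$'' transposes directly into Lenart's ``row $i$ strictly less than $i$'' condition because increasing tableaux are invariant under conjugation.
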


Unlike for symmetric Grothendieck functions, we do not obtain a polynomial when we restrict a weak symmetric Grothendieck function to a finite number of variables (\textit{i.e.}, it remains a formal power series).

Symmetric Grothendieck functions and weak symmetric Grothendieck functions have a common generalization given by Yeliussizov~\cite{Yel17}.
A \defn{hook tableau} is a semistandard Young tableau $T$ of the form
\[
\ytableausetup{boxsize=1.8em}
\ytableaushort{h{A_1}{\cdots}{A_k},{L_1},{\raisebox{-2pt}{$\vdots$}},{L_{\ell}}}\,.
\]
We call $h$ the \defn{hook entry} and the entries $A(T) := (A_1, \dotsc, A_k)$ the \defn{arm} and $L(T) := (L_1, \dotsc, L_{\ell})$ the \defn{leg}.
Let $L^+(T) := \{h\} \cup L(T)$ denote the \defn{extended leg}.
A \defn{(semistandard) hook-valued tableau of shape $\lambda$} is a filling $T$ of the boxes of $\lambda$ by hook tableaux such that the rows are weakly increasing and the columns are strictly increasing the in the same sense as for (multi)set-valued tableaux.
Let $\hvt^n(\lambda)$ denote the set of hook-valued tableau of shape $\lambda$ with max entry $n$.
Thus, following~\cite{Yel17}, we define the \defn{canonical Grothendieck polynomial} as
\[
\hG_{\lambda}(\xx; \alpha, \beta) := \sum_{T \in \hvt^{\infty}(\lambda)} \alpha^{\abs{A(T)}} \beta^{\abs{L(T)}} \wt(T).
\]
Note that
\[
\hG_{\lambda}(\xx; \alpha, 0) = \wG_{\lambda}(\xx; \alpha),
\qquad\qquad
\hG_{\lambda}(\xx; 0, \beta) = \G_{\lambda}(\xx; \beta).
\]
Furthermore, Equation~\eqref{eq:weak_definition} follows from~\cite[Prop.~3.4]{Yel17}.

\subsection{Dual canonical Grothendieck functions}

A \defn{reverse plane partition (RPP) of shape $\lambda$} is a filling of $\lambda$ by positive integers such that rows and columns are weakly increasing.
Define the weight of a RRP $P$ to be
\[
\wt(P) := x_1^{m_1} x_2^{m_2} \dotsm x_n^{m_n},
\]
where here $m_i$ is the number of \emph{columns} that contain an $i$ in $P$. As before, denote $\abs{P} := \sum_{i=1}^n m_i$.
Let $\rpp^n(\lambda)$ denote the set of reverse plane partitions with maximum entry $n$.
The \defn{dual symmetric Grothendieck function} $\dG_{\lambda}(\xx; \beta)$ is defined combinatorially by
\[
\dG_{\lambda}(\xx; \beta) = \sum_{P \in \rpp^{\infty}(\lambda)} \beta^{\abs{\lambda} - \abs{P}} \wt(P),
\]
The dual symmetric Grothendieck functions were shown to form a basis Hopf dual to the symmetric Grothendieck functions under the Hall inner product~\cite[Thm.~9.15]{LamPyl07}.
Furthermore, dual symmetric Grothendieck functions are known to be Schur positive~\cite[Thm.~9.8]{LamPyl07}, where $\rpp^n(\lambda)$ was given a $U_q(\fsl_n)$-crystal structure by Galashin~\cite{Galashin17}.

Let $\dwG_{\lambda}(\xx; \alpha)$ denote the \defn{dual weak symmetric Grothendieck function}, which we can define by $\dwG_{\lambda}(\xx; \alpha) = \omega \dG_{\lambda'}(\xx; \alpha)$. The dual weak symmetric Grothendieck functions form the Hopf dual basis of the weak symmetric Grothendieck functions~\cite[Thm.~9.15]{LamPyl07}. This was also given the following combinatorial interpretation~\cite{LamPyl07}. Define a \defn{valued-set tableaux of shape $\lambda$} to be a semistandard Young tableau of shape $\lambda$ such that boxes within a particular row are divided into \defn{groups}. Note that our description is conjugate to that from~\cite{LamPyl07}.
Define the weight of a valued-set tableau $V$ to be
\[
\wt(V) := x_1^{m_1} x_2^{m_2} \dotsm x_n^{m_n},
\]
where here $m_i$ is the number of \emph{groups} that contain an $i$ in $V$. As before, denote $\abs{P} := \sum_{i=1}^n m_i$.
Thus, the dual weak symmetric Grothendieck function can be given by
\[
\dwG_{\lambda}(\xx; \alpha) = \sum_{V \in \vst^{\infty}(\lambda)} \alpha^{\abs{\lambda} - \abs{V}} \wt(V),
\]
where $\vst^{\infty}(\lambda)$ is the set of all valued-set tableaux of shape $\lambda$ with max entry $n$.

We also require some additional definitions on valued-set tableaux in the sequel.
We call the leftmost (resp.\ rightmost) entry in a group the \defn{buoy} (resp.\ \defn{anchor}).
Thus, $m_i$ in the weight is also equal to the number of buoys $i$ in a valued-set tableau $V$ and $\abs{V}$ is the number of buoys (equivalently, anchors or groups).
We will also consider groups constructed by adding a vertical \defn{divider} between certain pairs of entries $i$ in the same row.

\subsection{Stembridge axioms for crystals}

We recall the Stembridge axioms~\cite{Stembridge03}, a set of local criteria used to determine if a $U_q(\fsl_n)$-crystal is isomorphic to a direct sum of highest weight crystals, but instead given the crystal and the crystal operators rather than the crystal graph. Recall that the Cartan matrix for $\fsl_n$ is given by
\[
(A_{ij})_{i,j=1}^{n-1} = \begin{bmatrix}
2 & -1 \\
-1 & 2 & -1 \\
& \ddots & \ddots & \ddots \\
& & -1 & 2 & -1 \\
& & & -1 & 2
\end{bmatrix}.
\]

Let $B$ be a set with crystal operators $e_i, f_i \colon B \to B \sqcup \{0\}$ such that $e_i$ and $f_i$ are computed using an $i$-signature; that is to say, $e_i$ (resp.~$f_i$) changes the rightmost $\bminus$ (resp.~leftmost $\bplus$) into a $\bplus$ (resp.~$\bminus$). Furthermore, every contribution of a $\bplus$ (resp.~$\bminus$) corresponds to multiplying $x_i$ (resp.~$x_{i+1}$) to the weight.
Define an \defn{$i$-string} as a subset of $B$ closed under $e_i$ and $f_i$.
We require the following statistics
\begin{align*}
\delta_i(b) & := -\max \{ k \in \ZZ_{\geq 0} \mid e_i^k b \neq 0 \},
&
\varphi_i(b) & := \max \{ k \in \ZZ_{\geq 0} \mid f_i^k b \neq 0 \},
\\
\Delta_i \delta_j(b) & := \delta_j(e_i b) - \delta_j(b),
&
\Delta_i \varphi_j(b) & := \varphi_j(e_i b) - \varphi_j(b),
\\
\nabla_i \delta_j(b) & := \delta_j(b) - \delta_j(f_i b),
&
\nabla_i \varphi_j(b) & := \varphi_j(b) - \varphi_j(f_i b).
\end{align*}

The following are the \defn{Stembridge axioms}:
\begin{itemize}
\item[(P1)] All $i$-strings have no cycles (\textit{i.e.}, there does not exist a $b$ such that $f_i^k b = b$ for some $k > 0$) and finite length.

\item[(P2)] For any $b, b' \in B$, we have $e_i b = b'$ if and only if $b = f_i b'$.

\item[(P3)] $\Delta_i \delta_j(b) + \Delta_i \varphi_j(b) = A_{ij}$.

\item[(P4)] $\Delta_i \delta_j(b) \leq 0$ and $\Delta_i \varphi_j(b) \leq 0$.

\item[(P5)] If $e_i b, e_j b \neq 0$, then $\Delta_i \delta_j(b) = 0$ implies $y := e_i e_j b = e_j e_i b$ and $\nabla_j \varphi_i(y) = 0$.

\item[(P6)] If $e_i b, e_j b \neq 0$, then $\Delta_i \delta_j(b) = \Delta_j \delta_i(b) = -1$ implies $y := e_i e_j^2 e_i b = e_j e_i^2 e_j b$ and $\nabla_i \varphi_j(y) = \nabla_j \varphi_i(y) = -1$.

\item[(P5')] If $f_i b, f_j b \neq 0$, then $\nabla_i \phi_j(b) = 0$ implies $y := f_i f_j b = f_j f_i b$ and $\Delta_j \delta_i(y) = 0$.

\item[(P6')] If $f_i b, f_j b \neq 0$, then $\nabla_i \phi_j(b) = \nabla_j \phi_i(b) = -1$ implies $y := f_i f_j^2 f_i b = f_j f_i^2 f_j b$ and $\Delta_i \delta_j(y) = \Delta_j \delta_i(y) = -1$.
\end{itemize}

\begin{thm}[{\cite[Thm.~3.3]{Stembridge03}}]
\label{thm:stembridge}
Let $B$ be a crystal that satisfies the Stembridge axioms such that every connected component $C^{(i)}$ contains a highest weight element $u^{(i)}$ of weight $\lambda^{(i)}$, then
\[
B \iso \bigoplus_{i=1}^N B(\lambda^{(i)}).
\]
\end{thm}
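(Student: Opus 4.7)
The plan is to reduce the problem to the connected case and then build a crystal isomorphism from each connected component to a highest weight crystal. First I would show that within any connected component $C$, the highest weight element is unique: axiom (P1) ensures each $i$-string is finite and acyclic, and together with (P3)--(P4) it forces the usual inequality on weights, so any element dominates a unique highest weight element obtained by greedily applying the $e_i$'s. Hence it suffices to prove that each connected component $C$ containing a highest weight element $u$ of weight $\lambda$ is isomorphic to $B(\lambda)$.

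Next I would define the candidate isomorphism $\psi \colon C \to B(\lambda)$ by sending $u$ to the highest weight element $u_\lambda \in B(\lambda)$ and extending by the rule $\psi(f_{i_1} \cdots f_{i_k} u) := f_{i_1} \cdots f_{i_k} u_\lambda$, whenever the left-hand side is nonzero. The crystal $B(\lambda)$ is generated by $f_i$'s from $u_\lambda$, and $B(\lambda)$ itself satisfies the Stembridge axioms, so if $\psi$ is well-defined then it is automatically a weight-preserving, $e_i,f_i$-equivariant map; surjectivity is immediate from generation, and injectivity follows because $\psi$ preserves the $i$-signature data used to define $\varepsilon_i,\varphi_i$ via (P3) and (P4).

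The main obstacle, and the heart of Stembridge's theorem, is \emph{well-definedness}: one must show that whenever two words in the $f_i$'s produce the same element of $C$, they also produce the same element of $B(\lambda)$. The strategy is to view the crystal graph as an edge-colored quiver and argue that any two paths from $u$ to a fixed vertex differ by a sequence of local moves. Axioms (P5) and (P5$'$) yield the commutation relation $f_i f_j = f_j f_i$ at a vertex $b$ when $\nabla_i \varphi_j(b)=0$ (corresponding to $A_{ij}=0$ or the ``decoupled'' case), while (P6) and (P6$'$) yield the length-four braid move $f_i f_j^2 f_i = f_j f_i^2 f_j$ when $A_{ij}=-1$. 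These are precisely the relations holding in $B(\lambda)$, so matching local moves on both sides closes the diagram.

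To turn this into a clean proof, I would proceed by induction on distance from $u$ in the crystal graph. Given two minimal-length words reaching the same vertex $b$, use (P3)--(P4) to compare the first steps, and apply either the commutation (P5)/(P5$'$) or the braid move (P6)/(P6$'$) to replace one word by another that agrees with the second in its initial segment, then invoke the induction hypothesis on the shorter remaining path. The technical subtlety I expect to be most delicate is verifying that the hypotheses of (P5) or (P6) are actually met when these local moves are invoked; this is where axiom (P4) (concavity of $\delta_j$ and $\varphi_j$ along $i$-strings) is crucial, since it rules out configurations where neither local move applies.
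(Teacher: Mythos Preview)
The paper does not supply a proof of this statement: it is quoted verbatim as \cite[Thm.~3.3]{Stembridge03} and used as a black box in the proof of Theorem~\ref{thm:hvt_crystal_structure}. There is therefore nothing in the paper to compare your proposal against.

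As an independent sketch of Stembridge's argument, your outline is in the right spirit---define a map from the component to $B(\lambda)$ by propagating from the highest weight element and use the local axioms (P5)/(P5$'$) and (P6)/(P6$'$) to show the definition is independent of the chosen path---but two steps are underspecified. First, your injectivity claim (``injectivity follows because $\psi$ preserves the $i$-signature data'') is not an argument: a well-defined surjection need not be injective, and what is actually needed is the symmetric statement that the reverse map $B(\lambda)\to C$ is also well-defined, i.e.\ that the Stembridge axioms pin down the crystal graph uniquely from its top element. Stembridge handles this by an induction on depth that simultaneously builds both graphs level by level, rather than by defining one map and checking it after the fact. Second, reducing to \emph{minimal-length} words is not obviously sufficient: two $f$-words reaching the same vertex need not have the same length a priori, and the local moves you cite are length-preserving; one has to first argue (again from the axioms) that any path from $u$ to $b$ has the same length, which is where the acyclicity from (P1) and the weight grading come in. These are fixable gaps rather than fatal errors, but the proposal as written would not compile into a proof without them.
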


We note that~(P3) and~(P4) are equivalent to one of three possibilities:
\[
\bigl( A_{ij}, \Delta_i \delta_j(b), \Delta_i \varphi_j(b) \bigr) = (0,0,0), (-1, -1, 0), (-1, 0, -1).
\]
From the weight condition of the $i$-signature, we have $\phi_i(b) + \delta_i(b) = m_i - m_{i+1}$, where $\wt(b) = x_1^{m_1} \dotsm x_n^{m_n}$. So along with~(P2), $B$ with the crystal operators satisfy the abstract $U_q(\fsl_n)$-crystal axioms~\cite{K93}.

\section{Multiset-valued tableaux}
\label{sec:crystal_MVT}

In this section, we prove our first main result: there exists a $U_q(\fsl_n)$-crystal structure on $\mvt^n(\lambda)$ such that it is isomorphic to a direct sum of irreducible highest weight crystals $B(\mu)$.
After that, we discuss the relation with the usual crystal structure on semistandard Young tableaux and some consequences for stable dual Grothendieck polynomials.

\subsection{Crystal structure}

In order to define the crystal structure, we start by defining the crystal operators $e_i, f_i \colon \mvt^n(\lambda) \to \mvt^n(\lambda) \sqcup \{0\}$.
We do so by following the signature rule as for set-valued tableaux in Definition~\ref{defn:svt_crystal_ops}, for which we need to define an appropriate reading word.
We write multisets as words for compactness.

\begin{dfn}[Reading word]
Let $T \in \mvt^n(\lambda)$.
Let $C$ be a column of $T$. Define the \defn{column reading word} $\rd(C)$ by first reading the smallest entry of each box from bottom-to-top in $C$, then reading the remaining entries from smallest to largest in each box from top-to-bottom in $C$.
Define the \defn{reading word} $\rd(T) = \rd(C_1) \rd(C_2) \dotsm \rd(C_k)$, where $C_1, C_2, \dotsc, C_k$ are the columns of $T$ from left-to-right.
\end{dfn}

\begin{ex}
For the multiset-valued (column) tableau
\[
\ytableausetup{boxsize=2.2em}
C = \; \ytableaushort{{\ml{1}13},{\ml{4}445},{\ml{6}},{\ml{7}899}}\,,
\]
we have
\[
\rd(C) = \mathbf{\color{darkred}7641} 13 445 899.
\]
\end{ex}

\begin{dfn}[Crystal operators]
\label{defn:mvt_crystal_ops}
Fix $T \in \mvt^n(\lambda)$ and $i \in I$.
Write $\bplus$ for each $i$ in $\rd(T)$ and $\bminus$ for each $i+1$ in $\rd(T)$ (ignore all other letters). 
Next, cancel signs in ordered pairs $\bminus \bplus$ until obtaining a sequence of the form $\bplus \cdots \bplus \bminus \cdots \bminus$ called the \defn{$i$-signature}.
\begin{description}
\item[\defn{$e_i T$}] If there is no $\bminus$ in the resulting sequence, then $e_i T = 0$. Otherwise let $\bbb$ correspond to the box of the leftmost uncanceled $\bminus$. Then $e_i T$ is given by one of the following:
\begin{itemize}
\item if there exists a box $\bbb^{\uparrow}$ immediately above $\bbb$ that contains an $i$, then remove $i + 1$ from $\bbb$ and add $i$ to $\bbb^{\uparrow}$;
\item otherwise replace the $i+1$ in $\bbb$ with an $i$.
\end{itemize}

\item[\defn{$f_i T$}] If there is no $\bplus$ in the resulting sequence, then $f_i T = 0$. Otherwise let $\bbb$ correspond to the box of the rightmost uncanceled $\bplus$. Then $f_i T$ is given by one of the following:
\begin{itemize}
\item if there exists a box $\bbb^{\downarrow}$ immediately below $\bbb$ that contains an $i+1$, then remove the $i$ from $\bbb$ and add an $i+1$ to $\bbb^{\downarrow}$;
\item otherwise replace the $i$ in $\bbb$ with an $i+1$.
\end{itemize}
\end{description}
\end{dfn}

\begin{ex}
The connected components in $\mvt^3(\Lambda_2)$ with the crystal operators from Definition~\ref{defn:mvt_crystal_ops} that correspond to $\alpha^0$, $\alpha^1$, and $\alpha^2$ are
\[
\ytableausetup{boxsize=2.0em}
\begin{tikzpicture}[>=latex,scale=2,every node/.style={scale=0.7}]
\node (c0) at (0,0) {$\ytableaushort{1,2}$};
\node (c1) at (1,0) {$\ytableaushort{1,3}$};
\node (c2) at (2,0) {$\ytableaushort{2,3}$};
\draw[->,red] (c0) -- node[midway,above] {\small 2} (c1);
\draw[->,blue] (c1) -- node[midway,above] {\small 1} (c2);
\begin{scope}[yshift=-1.5cm]
\node (s0) at (0,0) {$\ytableaushort{{11},2}$};
\node (s1) at (1,0.75) {$\ytableaushort{1,{22}}$};
\node (s2) at (1,0) {$\ytableaushort{{11},3}$};
\node (s12) at (2,0) {$\ytableaushort{{12},3}$};
\node (s21) at (2,0.75) {$\ytableaushort{1,{23}}$};
\node (s112) at (3,0) {$\ytableaushort{{22},3}$};
\node (s221) at (3,0.75) {$\ytableaushort{1,{33}}$};
\node (sb) at (4,0.75) {$\ytableaushort{2,{33}}$};
\draw[->,red] (s0) -- node[midway,below left] {\small 2} (s2);
\draw[->,blue] (s0) -- node[midway,above left] {\small 1} (s1);
\draw[->,red] (s1) -- node[midway,above] {\small 2} (s21);
\draw[->,blue] (s2) -- node[midway,above] {\small 1} (s12);
\draw[->,red] (s21) -- node[midway,above] {\small 2} (s221);
\draw[->,blue] (s12) -- node[midway,above] {\small 1} (s112);
\draw[->,red] (s112) -- node[midway,below right] {\small 2} (sb);
\draw[->,blue] (s221) -- node[midway,above right] {\small 1} (sb);
\end{scope}
\begin{scope}[yshift=-3.75cm]
\node (s0) at (0,0) {$\ytableaushort{{111},2}$};
\node (s1) at (1,0.75) {$\ytableaushort{{11},{22}}$};
\node (s2) at (1,0) {$\ytableaushort{{111},3}$};
\node (s12) at (2,0) {$\ytableaushort{{112},3}$};
\node (s21) at (2,0.75) {$\ytableaushort{{11},{23}}$};
\node (s112) at (3,0) {$\ytableaushort{{122},3}$};
\node (s221) at (3,0.75) {$\ytableaushort{{11},{33}}$};
\node (s1221) at (4,0.75) {$\ytableaushort{{12},{33}}$};
\node (s11) at (2,1.5) {$\ytableaushort{1,{222}}$};
\node (s211) at (3,1.5) {$\ytableaushort{1,{223}}$};
\node (s2211) at (4,1.5) {$\ytableaushort{1,{233}}$};
\node (s22211) at (5,1.5) {$\ytableaushort{1,{333}}$};
\node (s1112) at (4,0) {$\ytableaushort{{222},3}$};
\node (s21112) at (5,0.75) {$\ytableaushort{{22},{33}}$};
\node (sb) at (6,1.5) {$\ytableaushort{2,{333}}$};
\draw[->,red] (s0) -- node[midway,below left] {\small 2} (s2);
\draw[->,blue] (s0) -- node[midway,above left] {\small 1} (s1);
\draw[->,red] (s1) -- node[midway,above] {\small 2} (s21);
\draw[->,blue] (s2) -- node[midway,above] {\small 1} (s12);
\draw[->,red] (s21) -- node[midway,above] {\small 2} (s221);
\draw[->,blue] (s12) -- node[midway,above] {\small 1} (s112);
\draw[->,blue] (s112) -- node[midway,above] {\small 1} (s1112);
\draw[->,red] (s112) -- node[midway,below right] {\small 2} (s1221);
\draw[->,blue] (s221) -- node[midway,above right] {\small 1} (s1221);
\draw[->,blue] (s1) -- node[midway,above left] {\small 1} (s11);
\draw[->,red] (s11) -- node[midway,above] {\small 2} (s211);
\draw[->,red] (s211) -- node[midway,above] {\small 2} (s2211);
\draw[->,red] (s2211) -- node[midway,above] {\small 2} (s22211);
\draw[->,blue] (s22211) -- node[midway,above right] {\small 1} (sb);
\draw[->,red] (s1112) -- node[midway,above] {\small 2} (s21112);
\draw[->,blue] (s1221) -- node[midway,above right] {\small 1} (s21112);
\draw[->,blue] (s21) -- node[midway,above left] {\small 1} (s211);
\draw[->,red] (s21112) -- node[midway,above] {\small 2} (sb);
\end{scope}
\end{tikzpicture}
\]
\end{ex}

First we show that the crystal operators are well-defined and satisfy the requisite properties.

\begin{lemma}
\label{lemma:fixed_reading}
Let $T \in \mvt^n(\lambda)$ and suppose $f_i T \neq 0$, then the $i$ changed to $i+1$ in $f_i T$ does not change its position in the reading word.
That is to say, we have $\rd(f_i T) = f_i \rd(T)$.
\end{lemma}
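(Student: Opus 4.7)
The plan is to verify $\rd(f_i T) = f_i \rd(T)$ by a local analysis in the single column $C$ containing $\bbb$ (and, in Case 1, also $\bbb^{\downarrow}$). Since only entries in $C$ are altered by $f_i$, the difference between $\rd(T)$ and $\rd(f_i T)$ is supported inside $\rd(C)$, and the goal is to show this difference is a single-letter flip at exactly the position of the rightmost uncanceled $\bplus$ in the $i$-signature of $\rd(T)$.

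First I would exploit the strict column condition to show that, within $C$, the letters $i$ and $i+1$ each appear in at most one box. In Case 1, the hypothesis that $\bbb^{\downarrow}$ contains $i+1$ forces $\max \bbb = i$ and $\min \bbb^{\downarrow} = i+1$, so the $i$'s are confined to $\bbb$ and the $i+1$'s to $\bbb^{\downarrow}$; in Case 2, all $i$'s and $i+1$'s of $C$ lie in $\bbb$ itself. Using the column reading rule, one then locates these letters in $\rd(C)$: the $i$'s and $i+1$'s from the second pass form adjacent contiguous blocks (all $i$'s, then all $i+1$'s), possibly preceded in the first pass by a $\bminus$ at $\min \bbb^{\downarrow}$ (when this equals $i+1$) and by a $\bplus$ at $\min \bbb$ (when this equals $i$).

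Next I would compute $\rd(f_i T)$ by re-reading the modified column and compare with $\rd(T)$. In Case 1, and in Case 2 when $\bbb$ has at least two $i$'s or $\min \bbb < i$, the only change to $\rd(C)$ is that the boundary between the second-pass $i$-block and $(i+1)$-block shifts by one, flipping one position from $i$ to $i+1$. In the remaining sub-case of Case 2 (where $\bbb$ contains a unique $i$ equal to $\min \bbb$), the minimum of $\bbb$ shifts from $i$ to $i+1$ while the second-pass block is unchanged; again a single-letter flip, now at the first-pass position. Assembling the $i$-signature fragment contributed by $C$ gives $\bplus \cdots \bplus \bminus \cdots \bminus$ in Case 2 and $\bminus \bplus \cdots \bplus \bminus \cdots \bminus$ in Case 1; after internal $\bminus\bplus$ cancellation (eliminating the leading $\bminus$ in Case 1 together with one $\bplus$), the rightmost surviving $\bplus$ from $C$ sits at exactly the position of the flipped letter.

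Finally, I would observe that in the iterated $\bminus\bplus$ bracketing, $\bminus$'s from columns preceding $C$ consume $\bplus$'s from $C$ in left-to-right order, so global cancellation only eliminates the leftmost $\bplus$'s contributed by $C$. Hence the rightmost surviving $\bplus$ from $C$ persists as the globally rightmost uncanceled $\bplus$ whenever $\bbb$ is chosen in $C$, giving $\rd(f_i T) = f_i \rd(T)$. The main obstacle is the exceptional sub-case of Case 2 with $\min \bbb = i$ and a unique $i$ in $\bbb$, where the modification migrates to the first-pass position: there one must separately verify that the new minimum of $\bbb$ becomes $i+1$ (using that all remaining entries of $\bbb$ strictly exceed $i$) and that the only $\bplus$ contributed by $C$ sits at this first-pass position.
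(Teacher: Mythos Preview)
Your proposal is correct and follows essentially the same approach as the paper's proof: a case split on whether $f_i$ acts within $\bbb$ or moves an $i$ down to $\bbb^{\downarrow}$, followed by a local analysis of how $\rd(C)$ changes. The paper's version is much terser (it dismisses the in-box case as ``clear'' and gives a one-line argument for the moving case), whereas you carefully unpack the sub-case $\min\bbb=i$ with a unique $i$; one small point left implicit in both versions is that in Case~1 one cannot have $\bbb=\{i\}$, since then the lone $\bplus$ from $\bbb$ is immediately preceded by the first-pass $\bminus$ from $\bbb^{\downarrow}$ and hence canceled.
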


\begin{proof}
Let $T' = f_i T$, and suppose the changed $i$ is in box $\bbb$.
Clearly if $i$ becomes an $i+1$ in the same box, then it does not change its position.
Now consider when $i$ moves from $\bbb$ and becomes an $i+1$ in the box $\bbb^{\downarrow}$ immediately below $\bbb$. In this case, the $i$ changed is the rightmost $i$ in the $i$-signature of $T$, and hence the rightmost entry of $\bbb$ in the reading word (we also have $\max \bbb = i$ since $\min \bbb^{\downarrow} = i+1$). Hence, the changed $i+1$ in $T'$ must be read immediately after all entries of $\bbb^{\downarrow}$ in $T'$ have been read (consider this added $i+1$ to be the second such letter), which means it remains at the same position.
\end{proof}

\begin{remark}
We note that our reading word is the column version of the reading word from~\cite[Def.~2.5]{BM12}. Furthermore, when we consider the reading word from~\cite[Def.~2.5]{BM12} applied to $\svt^n(\lambda)$, but otherwise keep the same crystal operators, then the analog of Lemma~\ref{lemma:fixed_reading} holds in that setting.
In addition, our reading word and crystal structure for a single column is similar to the one for the minimaj crystal from~\cite{BCHOPSY18}.
\end{remark}

\begin{lemma}
\label{lemma:mvt_ops_well_defined}
Let $T, T' \in \mvt^n(\lambda)$. Then
\[
e_i T \in \mvt^n(\lambda) \sqcup \{0\},
\qquad
f_i T' \in \mvt^n(\lambda) \sqcup \{0\},
\qquad
e_i T = T' \Longleftrightarrow T = f_i T'.
\]
\end{lemma}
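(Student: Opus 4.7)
The plan is to verify that $f_iT \in \mvt^n(\lambda) \sqcup \{0\}$ (and symmetrically $e_iT \in \mvt^n(\lambda) \sqcup \{0\}$) by checking the semistandard conditions on the output, and then to deduce the mutual-inverse identity using Lemma~\ref{lemma:fixed_reading} (with its obvious $e_i$-analog, proved by the same argument) to reduce the duality to the fact that the signature rule on words is self-inverse.

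Suppose $f_iT \neq 0$ and let $\bbb$ denote the box of the rightmost uncanceled $\bplus$ in the $i$-signature. In the replacing sub-case of Definition~\ref{defn:mvt_crystal_ops}, three of the four boundary conditions on $\bbb$ reduce to routine inequalities: at $\bbb^{\leftarrow}$ from $\max\bbb^{\leftarrow} \leq \min\bbb \leq i$; at $\bbb^{\uparrow}$ since the new minimum of $\bbb$ still exceeds $\max\bbb^{\uparrow}$; and at $\bbb^{\downarrow}$ from the replacing-case hypothesis combined with column-strictness, which forces $\min\bbb^{\downarrow} \geq i+2$. The remaining row condition at $\bbb^{\rightarrow}$ is the main obstacle. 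I argue it by contradiction: if $\bbb^{\rightarrow}$ contained an $i$, then row-weakness in the row of $\bbb$ forces $\bbb$ to have no $i+1$'s, and row-weakness in the row below $\bbb$ together with the hypothesis makes every box of $\bbb^{\rightarrow}$'s column below $\bbb^{\rightarrow}$ have minimum at least $i+2$. Tracing the reading word from $\bbb$'s $\bplus$ to $\bbb^{\rightarrow}$'s $\bplus$ (namely, the rest of $\bbb$'s column followed by the bottom of the min-pass of $\bbb^{\rightarrow}$'s column), these observations show that no $\bminus$ is pushed onto the scan stack in this interval, so $\bbb^{\rightarrow}$'s $\bplus$ is uncanceled and positioned strictly after $\bbb$'s, contradicting the choice of $\bbb$. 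The moving sub-case of $f_i$ is handled by analogous boundary checks on $\bbb$ and $\bbb^{\downarrow}$; in addition I must verify that removing one $i$ from $\bbb$ does not empty it, which follows because if $\bbb$ contained only one $i$ then $\bbb^{\downarrow}$'s minimum $i+1$ would pair with $\bbb$'s $\bplus$ in the scan, contradicting its uncanceled status. The analysis for $e_i$ is symmetric, with the delicate condition being that $\bbb^{\leftarrow}$ does not contain $i+1$.

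For the mutual-inverse identity, Lemma~\ref{lemma:fixed_reading} shows $\rd(f_iT) = f_i\rd(T)$, and the analog $\rd(e_iT) = e_i\rd(T)$ holds by the same argument. Since the signature rule on words is self-inverse, the position altered by $f_i$ is simultaneously the rightmost uncanceled $\bplus$ in $\rd(T)$ and the leftmost uncanceled $\bminus$ in $\rd(f_iT)$, so $e_i$ applied to $f_iT$ targets the correct box. A short check confirms that the sub-cases match up: in the replacing case, column-strictness together with $\min\bbb \leq i$ gives $\max\bbb^{\uparrow} < i$, so $e_i$ uses its replace sub-case; in the moving case, $\bbb$ still contains an $i$ in $f_iT$ by the non-emptyness argument, so $e_i$ uses its move-up sub-case and returns the transferred letter to $\bbb$. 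The reverse implication $T = f_iT' \Rightarrow e_iT = T'$ follows identically with the roles of $e_i$ and $f_i$ exchanged.
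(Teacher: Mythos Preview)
Your proof is correct and follows essentially the same approach as the paper's: verify semistandardness of $f_iT'$ directly (the key obstruction being an $i$ in $\bbb^{\rightarrow}$, ruled out by producing an uncanceled $\bplus$ further right), then invoke Lemma~\ref{lemma:fixed_reading} and its $e_i$-analog to show the operators invert one another.

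Your write-up is in fact more careful than the paper's in two places. First, you explicitly check that $\bbb$ is not emptied in the moving sub-case, by observing that if $\bbb$ carried only a single $i$ then $\min\bbb^{\downarrow}=i+1$ sits immediately before $\min\bbb=i$ in the column reading word, so that $\bplus$ would be canceled; the paper's proof simply asserts ``we have $T\in\mvt^n(\lambda)$'' in this case without addressing non-emptiness. Second, you verify that the sub-cases of $e_i$ and $f_i$ match up (replace $\leftrightarrow$ replace, move $\leftrightarrow$ move), using in the moving case that $\bbb$ retains an $i$ after removal---the paper compresses this into a single sentence. Finally, you prove $e_iT\in\mvt^n(\lambda)$ by a symmetric direct argument, whereas the paper deduces it from the other two claims; your route is slightly longer but avoids a small circularity in that deduction (one must know $e_iT$ is a valid tableau before the biconditional applies to it).
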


\begin{proof}
We first show $f_i T' \in \mvt^n(\lambda) \sqcup \{0\}$, and if $f_i T' = 0$, then the claim is trivially true.
Suppose $T = f_i T'$ is formed by changing an $i$ to an $i+1$ within the same box $\bbb$. To show $T \in \mvt^n(\lambda)$, we first note that we could not have an $i+1$ in the box $\bbb^{\downarrow}$ immediately below $\bbb$. So it remains to show that there does not exist an $i$ in the box $\bbb^{\rightarrow}$ immediately to the right of $\bbb$. By semistandardness of $T'$, for the box $\bbb^{\searrow}$ immediately to the right of $\bbb^{\downarrow}$, we must have $i + 1 \notin \bbb^{\searrow}$. Hence, the $i \in \bbb^{\rightarrow}$ must correspond to an unpaired $\bplus$ to the right of the $\bplus$ from $\bbb$, which is a contradiction. Thus, we have $T \in \mvt^n(\lambda)$ in this case.

Now instead suppose $T = f_i T'$ is formed by moving an $i$ from a box $\bbb$ to an $i+1$ in $\bbb^{\downarrow}$, the box immediately below $\bbb$. Since $i+1 \in \bbb^{\downarrow}$ already in $T'$, we have $T \in \mvt^n(\lambda)$.

Next we show the claim $e_i T = T'$ if and only if $T = f_i T'$.
From Lemma~\ref{lemma:fixed_reading} (and the analogous statement for $e_i$), the $i$-signatures of $T'$ and $T$ must differ by $\bplus \leftrightarrow \bminus$ corresponding to the $i \leftrightarrow i+1$ with no other cancellations. Hence $e_i$ changes $i+1 \mapsto i$ in the same box if and only if $f_i$ changes $i \mapsto i+1$ in the same box. Therefore, we have $e_i T = T'$ if and only if $T = f_i T'$.

Finally, the claim $e_i T \in \mvt^n(\lambda)$ follows from the other two statements (this claim could also be proven directly similar to $f_i T' \in \mvt^n(\lambda)$).
\end{proof}

Next we show that the coefficient of $\alpha^a$ for a weak symmetric Grothendieck of a single column is isomorphic to a crystal of semistandard tableaux for hook shape.
Note that $\abs{T} - k = \abs{T} - \abs{\lambda}$.

\begin{prop}
\label{prop:column_isomorphism}
Recall that $1^k = \Lambda_k$ is a single column of height $k$.
Let
\[
\mvt^n_a(\Lambda_k) := \{ T \in \mvt^n(\Lambda_k) \mid \abs{T} - k = a \}.
\]
Then
\[
\mvt^n_a(\Lambda_k) \iso B(\Lambda_k + a\Lambda_1).
\]
\end{prop}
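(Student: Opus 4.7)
My plan is to exhibit an explicit bijection
\[
\Phi : \mvt^n_a(\Lambda_k) \longrightarrow \ssyt^n\bigl((a+1,1^{k-1})\bigr)
\]
with the semistandard Young tableaux of the hook shape $(a+1, 1^{k-1})$, which corresponds to the dominant weight $\Lambda_k + a\Lambda_1$, and then verify that $\Phi$ is a crystal isomorphism by matching reading words on both sides.

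Given $T \in \mvt^n_a(\Lambda_k)$ with boxes $B_1, \dotsc, B_k$ from top to bottom and minima $m_i := \min B_i$, the strict column condition forces $\max B_i < m_{i+1}$ for each $i < k$; in particular the extras $B_i \setminus \{m_i\}$ lie in the interval $[m_i, m_{i+1}-1]$ (respectively in $[m_k, n]$ when $i=k$). Define $\Phi(T)$ to have first column $m_1, m_2, \dotsc, m_k$ read top to bottom, and first row $m_1$ followed by all extras of $T$ listed in weakly increasing order; this row really is weakly increasing because the disjoint intervals above partition the extras into blocks indexed in order by $B_1, \dotsc, B_k$. Conversely, from any $H \in \ssyt^n((a+1,1^{k-1}))$, the first column recovers the $m_i$, and each entry $h$ of the first row past the corner is assigned to the unique $B_i$ with $m_i \leq h < m_{i+1}$ (setting $m_{k+1} := +\infty$), yielding an inverse. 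Hence $\Phi$ is a bijection.

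The critical point is that $\rd(T)$ coincides with the column reading word of $\Phi(T)$. Indeed, both words begin with $m_k, m_{k-1}, \dotsc, m_1$, and both then list the extras: in $\rd(T)$ these are sorted within each $B_i$ and concatenated in the order $B_1, B_2, \dotsc, B_k$, which by the interval separation above is exactly the globally sorted list appearing as the singleton columns $2, 3, \dotsc, a+1$ of $\Phi(T)$ read bottom-to-top, left-to-right. By Lemma~\ref{lemma:fixed_reading} together with the analogous statement for $e_i$ (a consequence of Lemma~\ref{lemma:mvt_ops_well_defined}), the operators $e_i$ and $f_i$ on $\mvt^n_a(\Lambda_k)$ intertwine $\rd$; the column reading word is classically a crystal embedding for semistandard Young tableaux. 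Combining these with injectivity of the column reading word on $\ssyt^n((a+1,1^{k-1}))$ gives $\Phi(e_i T) = e_i \Phi(T)$ and $\Phi(f_i T) = f_i \Phi(T)$. Since $\ssyt^n((a+1,1^{k-1})) \iso B(\Lambda_k + a\Lambda_1)$, the proposition follows.

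The main obstacle is the identification $\rd(T) = \text{col-rd}(\Phi(T))$, which depends on the subtle fact that the MVT strict column condition forces every extra in $B_i$ to be strictly smaller than every extra in $B_{i+1}$. Without this separation the two reading words could disagree, so that the sortedness of the first row of $\Phi(T)$ together with the block structure of the tail of $\rd(T)$ match up is the crux of the argument; once it is in hand the crystal compatibility is essentially formal.
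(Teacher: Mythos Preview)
Your proof is correct and follows essentially the same approach as the paper: you define the same explicit bijection (the paper calls it $\psi$) sending the minima to the first column and the remaining entries to the rest of the first row, and then observe that the MVT reading word of $T$ equals the ordinary column reading word of the image, so Lemma~\ref{lemma:fixed_reading} transports the crystal operators. The paper states these facts more tersely, while you spell out the interval-separation reasoning and the inverse construction, but the argument is the same.
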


\begin{proof}
Let $\mu = \Lambda_k + a \Lambda_1$.
We prove the claim by constructing an explicit crystal isomorphism $\psi \colon \mvt^n_a(\Lambda_k) \to B(\mu)$.
We define $\psi$ by
\[
\ytableausetup{boxsize=2.3em}
\def\arraystretch{1.9}
\begin{array}{|@{\;}c@{\;}|}
\hline
m_1 \leq o_1 \leq \cdots \leq o_{a_1}
\\\hline
m_2 \leq o_{a_1+1} \leq \cdots \leq o_{a_2}
\\\hline
\vdots
\\\hline
m_k \leq o_{a_{k-1}+1} \leq \cdots \leq o_{a_k}
\\\hline
\end{array}
\; \mapsto \;
  \ytableaushort{{m_1}{o_1}{\cdots}{o_{a_1}}{o_{a_1\!+1}}{\cdots}{o_{a_2}}{\cdots}{o_{a_k}},{m_2},{\raisebox{-2pt}{$\vdots$}},{m_k}}\,,
\]
We note that $m_i \leq o_{a_i} < m_{i+1} \leq o_{a_i+1}$ since the column is strictly increasing. Hence, we have $\psi(T) \in \ssyt^n(\mu)$ for all $T \in \mvt^n(\Lambda_k)$.
Note also that the reading words of $T$ and $\psi(T)$ are equal and so $\psi$ is a crystal isomorphism by Lemma~\ref{lemma:fixed_reading}.
\end{proof}

\begin{thm}
\label{thm:mvt_crystal}
Let $\lambda$ be a partition.
For any $T \in \mvt^n(\lambda)$ such that $T$ is a highest weight element, the closure of $T$ under the crystal operators is isomorphic to $B(\mu)$, where $\mu = \wt(T)$.
Moreover, we have
\[
\mvt^n(\lambda) \iso \bigoplus_{\mu \supseteq \lambda} B(\mu)^{\oplus M_{\lambda}^{\mu}},
\]
where $M_{\lambda}^{\mu}$ is the number of highest weight elements of weight $\mu$ in $\mvt^n(\lambda)$, and
\[
\wG_{\lambda}(\xx; \alpha) = \sum_{\mu \supseteq \lambda} \alpha^{\abs{\mu}-\abs{\lambda}} M_{\lambda}^{\mu} s_{\mu}(\xx).
\]
\end{thm}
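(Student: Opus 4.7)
The plan is to embed $\mvt^n(\lambda)$ as a sub-crystal of a tensor product of the single-column crystals handled by Proposition~\ref{prop:column_isomorphism}. Writing $\lambda' = (\lambda_1', \dotsc, \lambda_k')$ for the column heights of $\lambda$, I would define
\[
\Phi \colon \mvt^n(\lambda) \longrightarrow \mvt^n(\Lambda_{\lambda_1'}) \otimes \mvt^n(\Lambda_{\lambda_2'}) \otimes \dotsm \otimes \mvt^n(\Lambda_{\lambda_k'}),
\qquad T \mapsto C_1 \otimes C_2 \otimes \dotsm \otimes C_k,
\]
sending $T$ to the tensor of its columns read left to right; this is clearly injective, as $T$ is recovered from its columns.

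The key technical step is to show that $\Phi$ is a strict crystal morphism. Since $\rd(T) = \rd(C_1) \rd(C_2) \dotsm \rd(C_k)$, the combined $i$-signature of $T$ is the concatenation of the column $i$-signatures. A short bookkeeping of the $\bminus \bplus$ cancellation rule shows that whenever the rightmost uncanceled $\bplus$ in the combined signature lies in a factor $C_j$, it also coincides with the rightmost uncanceled $\bplus$ in $C_j$'s own canceled signature: cross-factor cancellations only ever consume the leftmost $\bplus$'s of a factor's $\bplus$-block. Together with Lemma~\ref{lemma:fixed_reading}, this forces $f_i$ to modify only $C_j$, and to do so in exactly the same way as $f_i$ acting on $C_j$ alone in $\mvt^n(\Lambda_{\lambda_j'})$. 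This is precisely the tensor-product crystal rule in the convention compatible with the left-to-right reading word, and the case of $e_i$ is symmetric.

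Given the embedding, Proposition~\ref{prop:column_isomorphism} decomposes each factor as $\bigoplus_{a \geq 0} B(\Lambda_{\lambda_j'} + a\Lambda_1)$, so the ambient tensor product is a direct sum of tensor products of irreducible $U_q(\fsl_n)$ highest weight crystals, which in turn decompose as a direct sum of $B(\nu)$'s by the Littlewood--Richardson rule. Because $\mvt^n(\lambda)$ is closed under the crystal operators by Lemma~\ref{lemma:mvt_ops_well_defined}, the image of $\Phi$ is a union of connected components of the ambient crystal, giving $\mvt^n(\lambda) \iso \bigoplus_\mu B(\mu)^{\oplus M_\lambda^\mu}$. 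Each connected component of $\mvt^n(\lambda)$ is contained in the finite set $\{T : \abs{T} = \abs{\lambda} + a\}$ for some fixed $a$ (crystal operators preserve $\abs{T}$), so it contains a highest weight element $T$, and irreducibility identifies the component with $B(\wt(T))$; this proves (a) and (b). The restriction $\mu \supseteq \lambda$ reflects that $M_\lambda^\mu = 0$ otherwise, which follows from the hook structure of the tensor factors and the Littlewood--Richardson decomposition. Finally, (c) is immediate by taking $\fsl_n$-characters and noting that $\abs{T} = \abs{\mu}$ is constant on each $B(\mu)$-summand, so $\alpha^{\abs{T} - \abs{\lambda}}$ factors out.

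The main obstacle is the crystal-morphism step: one must carefully verify that concatenating column reading words interacts with $\bminus \bplus$ cancellation in such a way that the position flipped by Definition~\ref{defn:mvt_crystal_ops} really agrees with the one flipped by the tensor-product crystal rule. Everything else is a formal consequence of Proposition~\ref{prop:column_isomorphism} together with standard tensor-product crystal theory.
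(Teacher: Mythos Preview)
Your approach is correct and is essentially the same as the paper's: both embed $\mvt^n(\lambda)$ into the tensor product of its columns, invoke Proposition~\ref{prop:column_isomorphism} on each factor, and conclude via the standard decomposition of tensor products of highest weight crystals. You supply more detail on why the column-splitting map is a strict crystal morphism (the $\bminus\bplus$ bookkeeping), which the paper handles in a single sentence by appeal to the definition of the reading word; your justification of $\mu \supseteq \lambda$ via the Littlewood--Richardson structure is a bit sketchy, but since $M_\lambda^\mu$ is defined as a count this is not a genuine gap (the paper defers it to a separate proposition).
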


\begin{proof}
Note that we can consider any multiset-valued tableau as a tensor product of single column multiset-valued tableaux by the definition of the crystal operators and the reading word.
In particular, this gives a strict crystal embedding, which implies that the image is a union of connected components.
Hence the first claim follows from Proposition~\ref{prop:column_isomorphism}, Lemma~\ref{lemma:mvt_ops_well_defined}, and that the tensor product of highest weight crystals is a direct sum of highest weight crystals.
The other two claims follow immediately from the first.
\end{proof}

We remark that our proof technique is similar to that used in~\cite{MPS18} in that we show the isomorphism for the fundamental building blocks, here these are single columns (Proposition~\ref{prop:column_isomorphism}), and using general properties of the tensor product rule (in~\cite{MPS18}, the building blocks were single rows).
We also note that Lemma~\ref{lemma:fixed_reading} and Lemma~\ref{lemma:mvt_ops_well_defined} also immediately yields Theorem~\ref{thm:mvt_crystal} as every $B(\mu)$ is a subcrystal of $B(\Lambda_1)^{\otimes \abs{\mu}}$, where the strict embedding is given by the reading word.

\begin{prop}
\label{prop:hw_mst}
Suppose $T \in \mvt^n(\lambda)$ is a highest weight element.  Then the $i$-th row of $T$ contains only instances of the letter $i$.
\end{prop}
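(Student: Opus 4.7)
My plan is to proceed by induction on $i \geq 1$ with inductive hypothesis that rows $1, 2, \ldots, i-1$ of $T$ contain only the letters $1, 2, \ldots, i-1$, respectively (vacuous when $i=1$). By the strict increase of columns, every entry of row $i$ is then at least $i$, so the task reduces to ruling out any entry of row $i$ strictly greater than $i$. Suppose toward a contradiction that such an entry exists, let $a$ be the smallest entry of row $i$ strictly exceeding $i$, and let $j$ be the smallest column index for which $a$ appears in the box $(i, j)$. I will show that this specific occurrence of $a$ contributes an uncanceled $\bminus$ to the $(a-1)$-signature of $\rd(T)$, giving $e_{a-1} T \neq 0$ and contradicting the highest weight hypothesis.

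To set up the contradiction I verify that no letter $a-1$ lies strictly to the right of this chosen $a$ in $\rd(T)$. Since $a-1 \geq i$, the inductive hypothesis rules out $a-1$ from rows $1, \ldots, i-1$. In row $i$, the minimality of $a$ together with weak increase along rows forces every entry of row $i$ below $a$ to equal $i$, and all such $i$'s lie in columns $\leq j$. For rows beyond $i$: since $a$ is an entry of $(i, j)$, the maximum entry of $(i, j)$ is at least $a$; extending rightward along row $i$ by weak increase and then downward by column strictness shows that every box $(k, j')$ with $k > i$ and $j' \geq j$ has minimum entry at least $a+1$. Hence no $a-1$ appears in rows $> i$ within columns $\geq j$, and no $a-1$ appears in row $i$ of any column $> j$.

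The decisive step is analyzing $\rd(C_j)$ itself. The reading word first reads the minimum of each box of $C_j$ from bottom to top and then reads the remaining entries top to bottom in sorted order, so either $a$ is the minimum entry of $(i, j)$, or else the minimum entry of $(i, j)$ equals $i$ and $a$ lies within the sorted rest of $(i, j)$. In the first subcase, everything following this $a$ in $\rd(C_j)$ is either one of the minima of the boxes of $C_j$ in rows $i-1, i-2, \ldots, 1$ (which by induction are $i-1, i-2, \ldots, 1$) or belongs to a rest: rests of rows $< i$ lie in $\{1, \ldots, i-1\}$, the rest of row $i$ consists of entries $\geq a$, and rests of rows $> i$ consist of entries $\geq a+1$. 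In the second subcase, the chosen $a$ sits after any $i$'s of the rest of $(i, j)$, and what follows it in $\rd(C_j)$ is either $\geq a$ (the remainder of that rest) or $\geq a+1$ (rests of rows $> i$). In either subcase no $a-1$ follows this $a$ in $\rd(C_j)$; combined with the preceding paragraph, this yields a $\bminus$ with no $\bplus$ anywhere to its right in $\rd(T)$, which must survive signature cancellation, so $e_{a-1} T \neq 0$. The main obstacle is precisely this column-level bookkeeping, which relies on the exact order prescribed by the reading word.
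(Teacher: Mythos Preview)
Your proof is correct and follows essentially the same strategy as the paper's: locate a minimal offending entry and show it contributes an uncanceled $\bminus$ to the relevant signature, contradicting highest weight. The paper first reduces to the rightmost column of $T$ and argues by minimality of the bad value there, whereas you induct on the row index and work in the \emph{leftmost} column containing the minimal bad value, carrying out the reading-word bookkeeping for all later columns explicitly; the underlying mechanism is the same.
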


\begin{proof}
It is sufficient to show this for the rightmost column $C$ as the claim follows for $T$ by semistandardness (in particular, that the rows must be weakly increasing). Suppose the claim is false: there exists a highest weight element $T \in \mvt^n(\lambda)$ such that there exists an $m$ in a box $\bbb$ in $C$ at row $r > m$. Let $m$ be minimal such value. By the signature rule for $e_{m-1}$, any $\bplus$ from an $m-1$ that would cancel the $\bminus$ from such an $m$ must occur later in the reading word. If this $m$ is not the smallest entry in $\bbb$ or the multiplicity of $m$ in $\bbb$ is at least $2$, then there is no $m-1$ after it in the reading word by the column strictness of $C$ and that $C$ is the rightmost column in $T$. Hence, we have $e_{m-1} T \neq 0$, which is a contradiction. Otherwise, in order to be highest weight, there was an $m-1$ in row $r-1$, we would have $m - 1 > r - 1$. However, this contradicts the minimality of $m$, and so the claim follows.
\end{proof}

\begin{cor}
\label{cor:same_length_mst}
For any $\mu \supseteq \lambda$ such that $B(\mu) \subseteq \mvt^n(\lambda)$, we have $\ell(\mu) = \ell(\lambda)$.
\end{cor}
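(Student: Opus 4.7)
The plan is to chain together Theorem~\ref{thm:mvt_crystal} and Proposition~\ref{prop:hw_mst} to bound $\ell(\mu)$ from above, and to use the containment $\mu \supseteq \lambda$ to bound it from below.

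First, by Theorem~\ref{thm:mvt_crystal} the hypothesis $B(\mu) \subseteq \mvt^n(\lambda)$ means $M_\lambda^\mu > 0$, so we can fix a highest weight element $T \in \mvt^n(\lambda)$ with $\wt(T) = x_1^{\mu_1} \cdots x_n^{\mu_n}$. Now I would invoke Proposition~\ref{prop:hw_mst}: row $i$ of $T$ consists only of copies of the letter $i$. Since $T$ has shape $\lambda$, any row with index $i > \ell(\lambda)$ is empty, so no letter greater than $\ell(\lambda)$ appears in $T$ at all. Consequently $\mu_i = m_i = 0$ for every $i > \ell(\lambda)$, which gives $\ell(\mu) \leq \ell(\lambda)$.

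For the reverse inequality, I would simply observe that $\mu \supseteq \lambda$ means $\mu_i \geq \lambda_i$ for all $i$. Taking $i = \ell(\lambda)$ gives $\mu_{\ell(\lambda)} \geq \lambda_{\ell(\lambda)} > 0$, so $\ell(\mu) \geq \ell(\lambda)$. Combining the two bounds yields $\ell(\mu) = \ell(\lambda)$.

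There is no real obstacle here: the corollary is essentially a one-line bookkeeping consequence of Proposition~\ref{prop:hw_mst} (which is the substantive statement) together with the trivial observation that containment of Young diagrams is monotone in the number of nonempty rows. The only point worth stating clearly is the identification of the weight $\mu$ appearing in $B(\mu) \subseteq \mvt^n(\lambda)$ with the weight of an actual highest weight tableau in $\mvt^n(\lambda)$, which is exactly what Theorem~\ref{thm:mvt_crystal} supplies.
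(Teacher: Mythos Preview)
Your proof is correct and follows exactly the paper's approach: the paper's one-line proof invokes Proposition~\ref{prop:hw_mst} to conclude that no letter exceeding $\ell(\lambda)$ appears in a highest weight element, giving $\ell(\mu)\le\ell(\lambda)$, with the other inequality implicit from $\mu\supseteq\lambda$. Your version simply spells out these steps (including the invocation of Theorem~\ref{thm:mvt_crystal} to produce the highest weight element) more explicitly than the paper does.
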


\begin{proof}
This follows immediately from Proposition~\ref{prop:hw_mst} as no $k > i$ can appear in any highest weight element of $\mvt^n(\lambda)$.
\end{proof}

\begin{cor}
\label{cor:hw_mst_skew_ssyt}
The number of highest weight multiset-valued tableaux of shape $\lambda$ and weight $\mu$ is the number of semistandard Young tableaux of shape $\mu / \lambda$ such that all of the entries of the $i$-th row are in the (closed) interval $[\lambda_1+1-\lambda_i,\lambda_1]$.
\end{cor}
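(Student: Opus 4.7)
The plan is to produce an explicit bijection $\Phi$ between highest weight multiset-valued tableaux of shape $\lambda$ and weight $\mu$ and semistandard Young tableaux of shape $\mu/\lambda$ satisfying the prescribed row-interval condition. By Proposition~\ref{prop:hw_mst}, any highest weight $T \in \mvt^n(\lambda)$ of weight $\mu$ has row $i$ consisting only of copies of $i$, so $T$ is determined by the multiplicities $a_{i,j} \geq 1$ in box $(i,j)$ subject to $\sum_j a_{i,j} = \mu_i$. Writing $c_{i,j} := a_{i,j} - 1$, I would define $\Phi(T)$ on the skew shape $\mu/\lambda$ by placing $c_{i,j}$ copies of the value $\lambda_1 + 1 - j$ into row $i$ and sorting each row weakly increasingly. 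This rule manifestly is a bijection from fillings satisfying the conclusion of Proposition~\ref{prop:hw_mst} (of weight $\mu$) onto all row-weakly-increasing fillings of $\mu/\lambda$ whose row-$i$ entries lie in $[\lambda_1 + 1 - \lambda_i, \lambda_1]$; the remaining task is to show that $T$ is highest weight if and only if $\Phi(T)$ is column-strict.

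To analyze the highest weight condition, I would compute the $k$-signature of $T$ directly from the reading word. Each column $j \leq \lambda_{k+1}$ contributes a baseline pair $\bminus\bplus$ (from reading the smallest entries of row $k+1$ then row $k$), followed by $\bplus^{c_{k,j}}$ and then $\bminus^{c_{k+1,j}}$ from the remaining letters; each column $\lambda_{k+1} < j \leq \lambda_k$ contributes $\bplus^{c_{k,j}+1}$. After the $\lambda_{k+1}$ baseline pairs cancel, the reduced signature is
\[
W_k \;=\; \bplus^{c_{k,1}} \bminus^{c_{k+1,1}} \cdots \bplus^{c_{k,\lambda_{k+1}}} \bminus^{c_{k+1,\lambda_{k+1}}} \bplus^{c_{k,\lambda_{k+1}+1}+1} \cdots \bplus^{c_{k,\lambda_k}+1}.
\]
Setting $C_{i,s} := \sum_{j \geq s} c_{i,j}$, the condition $e_k T = 0$, namely that every suffix of $W_k$ have at least as many $\bplus$'s as $\bminus$'s, reduces by inspecting the local maxima of the prefix sums (which occur at the ends of the $\bplus^{c_{k,j}}$ blocks) to the inequalities
\[
C_{k, j+1} + \lambda_k \;\geq\; C_{k+1, j} + \lambda_{k+1}, \qquad j = 1, \ldots, \lambda_{k+1}.
\]

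On the SSYT side, the entry in column $J$ of row $i$ of $\Phi(T)$ equals $\lambda_1 - s + 1$ for the unique $s$ with $C_{i, s+1} < J - \lambda_i \leq C_{i, s}$. Translating column-strictness between rows $k$ and $k+1$ yields $\lambda_{k+1} + C_{k+1, s} \leq \lambda_k + C_{k, s+1}$ for every $s$ with $c_{k+1,s} > 0$, and the weak monotonicity of $s \mapsto C_{k+1,s}$ promotes this to all $s \in \{1, \ldots, \lambda_{k+1}\}$, matching the highest weight system exactly. The main obstacle will be the combinatorial bookkeeping here: tracing through the signature through the interplay of the mixed and pure $\bplus$ blocks, and properly handling the vacuous cases $c_{k+1,s} = 0$ so that column-strictness captures the highest weight condition on the nose.
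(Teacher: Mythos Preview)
Your proposal is correct and follows essentially the same route as the paper's own proof: both start from Proposition~\ref{prop:hw_mst}, encode a highest weight element by the extra multiplicities $c_{i,j}=a_{i,j}-1$ (the paper calls these $x_{ij}$), send $c_{i,j}$ copies of the value $\lambda_1+1-j$ into row $i$ of the skew SSYT, and identify the highest weight bracketing condition with the inequalities $\lambda_{k+1}+C_{k+1,j}\le \lambda_k+C_{k,j+1}$, which is exactly the paper's Equation~\eqref{eq:bracketing_condition}. Your write-up is simply more explicit about the signature computation and the column-strictness translation than the paper's sketch.
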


\begin{proof}
Let $\mu = (\mu_1, \mu_2, \dotsc, \mu_{\ell})$. By Proposition~\ref{prop:hw_mst}, we have $\ell = r$ as we can construct any highest weight element $T \in \mvt^n(\lambda)$ of weight $\mu$ by choosing the number of additional entries $x_{ij} \geq 0$ of $i$ in each box at position $(i,j) \in \lambda$ such that $\sum_{1 \leq j \leq k} x_{ij} = \mu_i - \lambda_i$ for all $1 \leq i \leq r$. However, not all of these elements will be highest weight by the signature rule; in particular, we require for each $(i,j) \in \lambda$ that
\begin{equation}
\label{eq:bracketing_condition}
\lambda_{i+1} + \sum_{j \leq k \leq \lambda_{i+1}} x_{i+1,k} \leq \lambda_i + \sum_{j+1 \leq k \leq \lambda_i} x_{ik}.
\end{equation}
Note that this is equivalent to choosing a semistandard Young tableau of shape $\mu / \lambda$ with the entries of row $i$ being in the interval $[\lambda_1 + 1 - \lambda_i, \lambda_1]$ by considering the $k$-th extra $j$ read from right to left in column $\lambda_1 - i$ to be an $i$ in row $j$ of the semistandard Young tableau.
Indeed, we cannot have an entry in the interval $[1, \lambda_1 - \lambda_i]$ and the bracketing condition~\eqref{eq:bracketing_condition} is equivalent to the column strictness of the Young tableau and the skew shape.
\end{proof}

\begin{ex}
Consider $\lambda = 32 = 2\Lambda_2 + \Lambda_1$ and $\mu = 44 = 4 \Lambda_2$. Then the following are the highest weight MVT and their semistandard tableaux under Corollary~\ref{cor:hw_mst_skew_ssyt}:
\[
\ytableausetup{boxsize=1.8em}
\begin{array}{c@{\hspace{30pt}}c}
\ytableaushort{11{11},2{222}} \rightarrow \ytableaushort{\none1,22}
&
\ytableaushort{11{11},{22}{22}} \rightarrow \ytableaushort{\none1,23}
\\[1cm]
\ytableaushort{11{11},{222}2} \rightarrow \ytableaushort{\none1,33}
&
\ytableaushort{1{11}1,{222}2} \rightarrow \ytableaushort{\none2,33}
\end{array}
\]
\end{ex}

We note that if $\lambda = k^r = k \Lambda_r$ is an $r \times k$ rectangle, the Corollary~\ref{cor:hw_mst_skew_ssyt} implies that the multiplicity is $\abs{\ssyt^k(\mu/\lambda)}$ since $[\lambda_1 + 1 - \lambda_i, \lambda_1] = [k + 1 - k, k] = [1, k]$ is a constant. Furthermore, in this case we can consider $\mu/\lambda$ as a straight shape given by $(\mu_1 - k, \mu_2 - k, \dotsc, \mu_r - k)$, where $\mu = (\mu_1, \mu_2, \dotsc, \mu_r)$, and so we can count them by the hook content formula (see, \textit{e.g.},~\cite[Thm.~7.21.2]{ECII}).

\begin{ex}
\label{ex:rectangle_decomp}
Let $\lambda = 33 = 3\Lambda_2$ and $\mu = 54 = 4\Lambda_2 + \Lambda_1$. The highest weight multiset valued tableaux of shape $\lambda$ and weight $\mu$ are in bijection with the set of SSYT of shape $21$ in the letters $\{1,2,3\}$:
\begin{align*}
\ytableausetup{boxsize=1.8em}
     \ytableaushort{{1}{1}{111},{2}{22}{2}} & \rightarrow 
  \ytableaushort{11,2} &
     \ytableaushort{{1}{1}{111},{22}{2}{2}} & \rightarrow 
  \ytableaushort{11,3}
\\[5pt]
     \ytableaushort{{1}{11}{11},{2}{22}{2}} & \rightarrow 
  \ytableaushort{12,2} &
     \ytableaushort{{1}{11}{11},{22}{2}{2}} & \rightarrow 
  \ytableaushort{12,3}
\allowdisplaybreaks\\[5pt]
     \ytableaushort{{11}{1}{11},{2}{22}{2}} & \rightarrow 
  \ytableaushort{13,2} &
     \ytableaushort{{11}{1}{11},{22}{2}{2}} & \rightarrow 
  \ytableaushort{13,3}
\\[5pt]
     \ytableaushort{{1}{1}{111},{2}{22}{2}} & \rightarrow 
  \ytableaushort{22,3} &
     \ytableaushort{{11}{11}{1},{22}{2}{2}} & \rightarrow 
  \ytableaushort{23,3}
\end{align*}
\end{ex}

\subsection{Uncrowding the crystal structure}
\label{sec:uncrowding}

An \defn{increasing tableau} is a semistandard Young tableau that is also strictly increasing across its rows.
Let $\mcF^c_{\mu/\lambda}$ denote the set of increasing tableaux of shape $\mu/\lambda$ where the $i$-th column is \defn{strictly flagged} by $i$, that is to say the maximum entry in the $i$-th column is strictly less than $i$.
We call a tableau in $\mcF^c_{\mu/\lambda}$ a \defn{column flagged tableau}.
Let $T \xleftarrow{\RSK} T'$ denote the Robinson--Schensted--Knuth (RSK) insertion (see, \textit{e.g.},~\cite{ECII} for more details on RSK) of the reading word of $T'$ into $T$.

Next, we construct an explicit crystal isomorphism
\[
\Upsilon \colon \mvt^n(\lambda) \to \bigsqcup_{\mu \supseteq \lambda} B(\mu) \times \mcF^c_{\mu/\lambda},
\]
where the crystal structure on the codomain is given by $f_i(b \times F) = (f_i b) \times F$ for all $b \times F \in B(\mu) \times \mcF^c_{\mu/\lambda}$ for any fixed $\mu$.
We call the map $\Upsilon$ \defn{uncrowding} as it is given similar to the uncrowding map for set-valued tableaux (see~\cite[Sec.~6]{Buch02},~\cite[Sec.~5]{BM12}, and~\cite[Thm.~3.12]{MPS18}; see also~\cite{RTY18}), but working column-by-column and measuring the growth of the diagram along columns.
More specifically, for any $T \in \mvt^n(\lambda)$ we define $\Upsilon(T)$ recursively starting with $b_{\lambda_1+1} \times F_{\lambda_1+1} = \emptyset \times \emptyset$. Suppose we are at step $i$ with the current state being $b_i \times F_i$, and let $C_j$ denote the $j$-th column of $T$. Construct
\[
b_{i-1} := \rd(C_{i-1}) \xleftarrow{\RSK} \rd(C_i) \xleftarrow{\RSK} \cdots \xleftarrow{\RSK} \rd(C_{\lambda_1}).\footnote{This is equivalent to RSK inserting the image of (the reading word of) the image under the isomorphism $\psi$ from Proposition~\ref{prop:column_isomorphism} of the rightmost $i-1$ columns of $T$.}
\]
Construct $F_{i-1}$ by starting first with $F_i$ of shape $\mu_i$ but shifting the necessary elements to the right one step, so partially filling in the shape $\mu_{i-1} / \lambda_{\geq i-1}$, where
\begin{equation}
\label{eq:rightmost_column_shape}
\lambda_{\geq i-1} = \bigl( \max(\lambda_1-i+2,0), \dotsc, \max(\lambda_{\ell}-i+2,0) \bigr)
\end{equation}
is the shape of the rightmost $i-1$ columns of $\lambda$ and $\mu_{i-1}$ is the shape of $b_{i-1}$.
Then add entries in the unfilled boxes in column $j$ with entry $j-1$ until $F_{i-1}$ has been filled in. Thus, we constructed the $(i-1)$-th step $b_{i-1} \times F_{i-1}$. Repeating this for every column, the final result is $\Upsilon(T) = b_1 \times F_1$.

\begin{ex}
Applying uncrowding to
\[
\ytableausetup{boxsize=1.8em}
T = \ytableaushort{{11}11,2{22},{33}}\,,
\]
\ytableausetup{boxsize=1.5em}
we first start with $b_4 \times F_4 = \emptyset \times \emptyset$. We then RSK insert the reading word $1$ of the rightmost column and obtain $b_3 \times F_3 = \ytableaushort{1} \times \ytableaushort{{\ml{1}}}$. Next, we consider the insertion tableau under RSK of $\ml{212}1$ and obtain
\[
b_2 \times F_2 = \ytableaushort{11,22} \quad \times \quad \ytableaushort{{\cdot}{\cdot},{\cdot}{\ml{1}}}\,.
\]
Finally, to obtain $\Upsilon(T) = b_1 \times F_1$, we perform RSK on $\ml{32113}2121$ to obtain
\[
b_1 \times F_1 = \ytableaushort{1111,222,33} \quad \times \quad \ytableaushort{{\cdot}{\cdot}{\cdot}{\ml{3}},{\cdot}{\cdot}{1},{\cdot}{\ml{1}}}\,,
\]
where the $1$ in the third column comes from $F_2$ shifted to the right one step.
\end{ex}

\begin{ex}
\label{ex:uncrowding_big}
Applying uncrowding to
\[
\ytableausetup{boxsize=1.8em}
T = \ytableaushort{{112}{22}{256},{33}{444}{7},{568},{9}}\,,
\]
where $\rd(T) = 953112368 \; 42244 \; 7256$,
we obtain
\begin{align*}
\ytableausetup{boxsize=1.5em}
b_4 \times F_4 & = \emptyset & \times \quad & \emptyset,  
\\ b_3 \times F_3 & = \ytableaushort{256,7} & \times \quad & \ytableaushort{{\cdot}{\ml{1}}{\ml{2}},{\cdot}}\,,  
\allowdisplaybreaks \\ b_2 \times F_2 & = \ytableaushort{222456,447} & \times \quad & \ytableaushort{{\cdot}{\cdot}12{\ml{4}}{\ml{5}},{\cdot}{\cdot}{\ml{2}}}\,,  
\allowdisplaybreaks \\ b_1 \times F_1 & = \ytableaushort{112222456,33447,568,9} & \times \quad & \ytableaushort{{\cdot}{\cdot}{\cdot}1245{\ml{7}}{\ml{8}},{\cdot}{\cdot}{\cdot}2{\ml{4}},{\cdot}{\ml{1}}{\ml{2}},{\cdot}}\,,
\end{align*}
resulting in $\Upsilon(T) = b_1 \times F_1$.
\end{ex}

\begin{thm}
\label{thm:flagged_decomposition}
We have
\[
\mvt^n(\lambda) \iso \bigoplus_{\mu \supseteq \lambda} B(\mu)^{\oplus \abs{\mcF^c_{\mu/\lambda}}},
\]
where the isomorphism is given by the uncrowding map $\Upsilon$, and so $M_{\lambda}^{\mu} =  \abs{\mcF^c_{\mu/\lambda}}$. Moreover, we have
\[
\wG_{\lambda}(\xx; \alpha) = \sum_{\mu \supseteq \lambda} \alpha^{\abs{\mu}-\abs{\lambda}} \abs{\mcF^c_{\mu/\lambda}} s_{\mu}(\xx).
\]
\end{thm}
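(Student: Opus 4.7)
The plan is to prove the statement by showing that the uncrowding map $\Upsilon$ is a strict bijective crystal morphism onto $\bigsqcup_{\mu \supseteq \lambda} B(\mu) \times \mcF^c_{\mu/\lambda}$, where the crystal acts only on the $B(\mu)$-factor. Combined with Theorem~\ref{thm:mvt_crystal}, this identifies $M_\lambda^\mu = |\mcF^c_{\mu/\lambda}|$, and the claimed Schur expansion follows by taking characters and using that $s_\mu(\xx)$ is the character of $B(\mu)$.

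First I would verify that $\Upsilon$ is well-defined by downward induction on $i$ from $\lambda_1+1$ to $1$, establishing that $b_i$ has shape $\mu_i \supseteq \lambda_{\geq i}$ and that $F_i$ is a column-flagged increasing tableau of shape $\mu_i/\lambda_{\geq i}$. The base case $i = \lambda_1$ uses Proposition~\ref{prop:column_isomorphism}: the RSK tableau of $\rd(C_{\lambda_1})$ equals the hook $\psi(C_{\lambda_1})$, whose first column is precisely $\lambda_{\geq \lambda_1}$, so $F_{\lambda_1}$ is a single-row tableau with labels $j-1$ in column $j$. For the inductive step, the reading word $\rd(C_{i-1})$ consists of a strictly decreasing minimum-block followed by a weakly increasing extras-block, so inserting it into $b_i$ via RSK creates (or deepens) a new leftmost column of height $\lambda'_{i-1}$ and grows the top rows by the extras, yielding $\mu_{i-1} \supseteq \lambda_{\geq i-1}$. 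The column-flag condition on $F_{i-1}$ is preserved under the rightward shift—an entry $<j$ in column $j$ becomes an entry $<j+1$ in column $j+1$—and new labels $j-1$ in column $j$ trivially satisfy the condition; strict increase along rows and columns of $F_{i-1}$ follows because new boxes occupy an outer rim of $\mu_{i-1}$ beyond the shifted $\mu_i$.

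Next, I would prove $\Upsilon$ intertwines the crystal operators. By Lemma~\ref{lemma:fixed_reading}, $\rd$ defines a strict crystal embedding into the word crystal, and the classical fact that RSK is a crystal isomorphism sending $w$ to $(P(w), Q(w))$ with crystal operators acting only on $P$ gives $P(\rd(f_j T)) = f_j\, b_1$. For invariance of $F_1$, it suffices—since $F_1$ is determined purely by the sequence $\mu_{\lambda_1} \subseteq \cdots \subseteq \mu_1$ together with $\lambda$—to show each intermediate shape $\mu_i$ is crystal invariant. Writing $\rd(T) = uv$ with $v = \rd(C_i)\cdots \rd(C_{\lambda_1})$, the signature-bracketing analysis of the tensor product rule shows that $f_j$ acting on $\rd(T)$ either leaves $v$ unchanged (when the active $\bplus$ lies in $u$) or replaces it by $f_j v$ in the $v$-crystal; in either case $P(v)$ retains the same shape. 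Bijectivity is then handled by reverse RSK: the strictly increasing column-flagged structure of $F_1$ lets us identify the boxes added at each step (those carrying the labels corresponding to column $i-1$), reverse-bump them out of $b_1$ to recover $\rd(C_1)$, and iterate; the inverse of Proposition~\ref{prop:column_isomorphism} then reconstructs each column from its reading word.

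The main obstacle I anticipate is the shape-containment claim $\mu_{i-1} \supseteq \lambda_{\geq i-1}$ in the inductive step, which requires carefully tracking how the strictly decreasing minimum-sequence of $\rd(C_{i-1})$ interacts via RSK row bumps with the existing shape $\mu_i$ to produce the required leftmost column of height $\lambda'_{i-1}$, while simultaneously checking that the extras only augment the top $\lambda'_{i-1}$ rows. This is analogous to the uncrowding analysis of~\cite[Sec.~6]{Buch02} and~\cite[Thm.~3.12]{MPS18} for set-valued tableaux, with the roles of rows and columns interchanged; a secondary subtlety is verifying the strict-increase property of $F_{i-1}$ when shifted old entries and freshly added labels can potentially occupy the same row or column.
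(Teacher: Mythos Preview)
Your overall strategy matches the paper's: show $\Upsilon$ is well-defined, use that RSK is a crystal isomorphism so that $b_1$ intertwines with the crystal operators, and argue $F_1$ is crystal-invariant. Your argument for the last point via the tensor product rule on the suffixes $w_i=\rd(C_i)\cdots\rd(C_{\lambda_1})$ is correct and in fact more explicit than what the paper writes. The gap is that you have not accounted for the fact that the uncrowding construction \emph{prepends} rather than appends.

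By definition $b_{i-1}=P\bigl(\rd(C_{i-1})\,w_i\bigr)$ where $b_i=P(w_i)$; the new column word sits at the \emph{beginning} of the word. Your inductive step speaks of ``inserting $\rd(C_{i-1})$ into $b_i$'' and reading off a new leftmost column, but inserting $\rd(C_{i-1})$ into $b_i$ computes $P\bigl(w_i\,\rd(C_{i-1})\bigr)$, which is not $b_{i-1}$. The correct forward picture is to take the hook $\psi(C_{i-1})$ and insert $\rd(b_i)$ into it; the shape assertion you actually need (that the right-shift of $\mu_i$ sits inside $\mu_{i-1}$, so that the shifted $F_i$ fits into $F_{i-1}$) has to be argued from that viewpoint.

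The same prepend/append confusion breaks your inverse. Ordinary reverse row-bumping recovers the \emph{last} letters of the word, here $\rd(C_{\lambda_1})$, not $\rd(C_1)$. You correctly identify in $F_1$ the boxes coming from step~$1$ (those maximal in their column), but these record the contribution of the \emph{first} letters; there is no direct reverse-bump on $b_1$ that extracts them. The paper resolves this by conjugating with the Lusztig involution: since $\RSK(w^*)^*=\RSK(w)$, applying $*$ to $b_1$ turns the column-$1$ letters into the last-inserted letters of $w^*$, so standard $\RSK^{-1}$ on $b_1^*$ (with a recording tableau manufactured from $F_1$) yields $\rd(C_1)^*$, and one more $*$ recovers $C_1$. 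Without this device (or an equivalent one, such as switching to column insertion throughout), your bijectivity argument does not go through.
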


\begin{proof}
It is sufficient to show that the map $\Upsilon$ is a bijection as RSK insertion is a crystal isomorphism (see, \textit{e.g.},~\cite{BS17,LLT02}). By the construction, the result of $\Upsilon$ satisfies the flagging and row strictness conditions. By the standard properties of RSK and a straightforward induction, the result of $\Upsilon$ also satisfies the column strictness condition. So the map $\Upsilon$ is well-defined. We can construct the inverse $\Upsilon^{-1}$ by conjugating by the Lusztig involution and recursively by applying reverse RSK inserting the boxes in the column flagged tableau that are maximal in their column and the extra column added. Indeed, if we are at $b_i \times F_i$, we first apply the Lusztig involution to $b_i$ to get $b_i^*$ and perform reverse RSK insertion on the boxes of $b_i^*$ on the outer corners from bottom to top, and then on the outer box in the row corresponding to a box $\bbb$ in column $j$ of $F$ such that $j-1$ is the entry in $\bbb$, doing this from left-to-right until no longer possible. That is to say we treat all of these extra entries as being the same value in the recording tableaux. This determines the $i$-th column after applying the Lusztig involution again to the resulting word. Since $\RSK(w^*)^* = \RSK(w)$, the result is $b_{i+1} \times F_{i+1}$. Thus this is the inverse procedure of $\Upsilon$.
\end{proof}

As a consequence of Theorem~\ref{thm:flagged_decomposition}, we have $M_{\lambda}^{\mu} = \abs{\mcF_{\mu/\lambda}}$.
Furthermore, these are the conjugate of the flagged increasing tableaux of Lenart~\cite{Lenart00}, and so
\[
\G_{\lambda}(\xx; \beta) = \sum_{\mu \supseteq \lambda} M_{\lambda}^{\mu'} s_{\mu} = \sum_{\mu' \supseteq \lambda} M_{\lambda}^{\mu'} \omega s_{\mu'} = \omega \wG_{\lambda}(\xx; \beta),
\]
yielding a crystal-theoretic proof of Proposition~\ref{prop:conjugate_defn} (recall that $\omega$ is an involution).

We can construct the recording tableau we use to perform $\RSK^{-1}$ in the proof of Theorem~\ref{thm:flagged_decomposition} recursively following the description in constructing $\Upsilon^{-1}(b_1 \times F_1)$. We can clearly reconstruct $F_i$ by using the entries that are maximal in each column. For the $i$-th step, suppose the $i$-th column has height $h$, we increase all of the current entries by $h+1$, then we set the rightmost unset entry in row $j$ to $j+1$. Finally, for every entry in $F_i$ that is maximal in its column in row $j$, we set an entry to be $1$ in row $j$. We repeat this until we obtain a semistandard Young tableau.

\begin{ex}
Consider $b_1 \times F_1$ from Example~\ref{ex:uncrowding_big}. Then we construct the corresponding recording tableau $Q$ as
\begin{gather*}
\ytableaushort{{\cdot}{\cdot}{\cdot}{\cdot}{\cdot}{\cdot}112,{\cdot}{\cdot}{\cdot}13,114,5}\,,
\qquad
\ytableaushort{{\cdot}{\cdot}{\cdot}112445,{\cdot}1346,447,8}\,,
\allowdisplaybreaks\\
Q = \ytableaushort{112445778,34679,77{10},{11}}\,.
\end{gather*}
Performing the Lusztig involution on $b_1$ (in $\fsl_{10}$), we obtain
\[
T' = \ytableaushort{145667799,25688,388,4}\,,
\]
and then $\RSK^{-1}(T',Q)$ gives the word $w^* = 4583\;66886\;247899751$. When we apply the Lusztig involution to $w^*$ and obtain $w = 953112368\;42244\;7256$, which is precisely the reading word of $T$, and we can separate into columns based on the values of $Q$.
\end{ex}

Next, we construct a bijection with the set from Corollary~\ref{cor:hw_mst_skew_ssyt}.

\begin{prop}
Let $\overline{\ssyt}^{\lambda_1}(\mu/\lambda)$ be the set of semistandard Young tableaux from Corollary~\ref{cor:hw_mst_skew_ssyt}.
Then the map $\phi \colon \mcF_{\mu/\lambda} \to \overline{\ssyt}^{\lambda_1}(\mu/\lambda)$, where $\phi(T)$ is constructed by subtracting $i-\lambda_1$ to each entry of the $i$-th column of $T$, is a bijection.
\end{prop}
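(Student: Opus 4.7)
The plan is to verify that $\phi$ is well-defined, with image landing in $\overline{\ssyt}^{\lambda_1}(\mu/\lambda)$, and then exhibit an explicit inverse given by the opposite column-wise shift. The core observation is that shifting the $i$-th column of $T$ by a constant that decreases by one as $i$ increases is exactly what converts strictly increasing rows into weakly increasing rows (and vice versa), while automatically preserving strict column monotonicity.

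First I would check that $\phi(T) \in \overline{\ssyt}^{\lambda_1}(\mu/\lambda)$ for $T \in \mcF^c_{\mu/\lambda}$, which requires four conditions. Strict column monotonicity is immediate because every entry of column $i$ is shifted by the same constant. For weak row monotonicity, adjacent columns $i$ and $i+1$ get shifts differing by exactly one, so the strict inequality $T(r,i) < T(r,i+1)$ coming from $T$ being an increasing tableau relaxes to $\phi(T)(r,i) \leq \phi(T)(r,i+1)$. The upper bound $\phi(T)(r,i) \leq \lambda_1$ on every entry is a direct consequence of the column flag condition $T(r,i) \leq i - 1$ combined with the shift in column $i$.

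The main step is the row-dependent lower bound $\phi(T)(r,c) \geq \lambda_1 + 1 - \lambda_r$. I would establish this first at the leftmost box of row $r$ in the skew shape, namely the box at column $c = \lambda_r + 1$: positivity $T(r, \lambda_r+1) \geq 1$ together with the shift there yields exactly the required bound. For any other box in row $r$, the weak row monotonicity already established propagates the inequality to the right. This is the only point in the argument where both the skew-shape position $c > \lambda_r$ and the positivity of entries come together, and it is the main obstacle; all other verifications are formal consequences of the shift structure.

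The inverse $\phi^{-1}$ is defined by adding back the same column-dependent shift. Entirely symmetric verifications show $\phi^{-1}(V) \in \mcF^c_{\mu/\lambda}$ for $V \in \overline{\ssyt}^{\lambda_1}(\mu/\lambda)$: column strictness is preserved by a constant shift in each column; the weak row monotonicity of $V$ becomes strict after the reverse one-column-offset shift; the upper bound $V(r,c) \leq \lambda_1$ produces the column flag $\phi^{-1}(V)(r,c) \leq c-1$; and the lower bound on $V$ gives $\phi^{-1}(V)(r, \lambda_r+1) \geq 1$, which propagates rightward by the now-strict row monotonicity. Since $\phi$ and $\phi^{-1}$ are mutually cancelling entry-wise shifts, they are inverse bijections, completing the proof.
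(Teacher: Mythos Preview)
Your proposal is correct and follows essentially the same route as the paper's own proof: both verify that the column-wise shift converts strictly increasing rows into weakly increasing rows, that the column flag condition becomes the upper bound $\lambda_1$, and that positivity at the leftmost box of each row becomes the lower bound $\lambda_1+1-\lambda_r$. You are somewhat more thorough than the paper in that you explicitly check column strictness and verify the inverse map in detail (the paper simply asserts invertibility), but these are refinements of the same argument rather than a different approach.
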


\begin{proof}
It is easy to see that the strictly increasing rows condition in $\mcF_{\mu/\lambda}$ is equivalent under $\phi$ to the weakly increasing rows condition in $\ssyt^k(\mu/\lambda)$. Furthermore, the maximum entry in column $i$ being $i$ condition in $\mcF_{\mu/\lambda}$ is equivalent under $\phi$ to the largest entry in column $i$ under $\phi$ in $\overline{\ssyt}^{\lambda_1}(\mu/\lambda)$ is $i - (i-\lambda_1) = \lambda_1$ (and hence for every row). Similarly the minimum entry in row $j$ for an increasing tableau in $\mcF_{\mu/\lambda}$ is $1$ in column $\lambda_j$, which is equivalent to the minimum entry for row $j$ in $\overline{\ssyt}^{\lambda_1}(\mu/\lambda)$ is $1 - (\lambda_j - \lambda_1) = \lambda_1 + 1 - \lambda_j$.
Thus $\phi$ is well-defined and surjective.
The map $\phi$ is clearly invertible, and hence $\phi$ is a bijection.
\end{proof}

\subsection{Weak Stable Grothendieck Functions}
\label{sec:weak_stable}

The \defn{$0$-Hecke monoid} is the monoid of all finite words in the alphabet $\{1,2,\dotsc,n\}$ subject to the relations
\begin{itemize}
\item $ij \equiv ji$ if $|i-j|>1$,
\item $iji \equiv jij$ if $|j-i|=1$,
\item $ii \equiv i$.
\end{itemize}
For any $w \in \sym_n$, let $\HH_w^k$ denote the set of words of length $k$ that are equivalent to some reduced expression for $w$ in the $0$-Hecke monoid (\textit{i.e} $w = s_{i_1} \dotsm s_{i_{\ell}}$ is considered as $i_1 \dotsm i_{\ell}$).
Note that this does not depend on the choice of reduced expression for $w$ by Matsumoto's theorem~\cite{Matsumoto64} (\textit{i.e.}, that any two reduced expressions for $w$ are related by the braid relations).

Next, let $\widehat{\HH}_{w,m}^k$ denote the set of two-line arrays
\[
\left[
\begin{array}{cccccccccccc}
1 & \cdots & 1 & 1 & 2 & \cdots & 2 & 2 & \cdots & m & \cdots & m \\
a_{1\ell_1} & \cdots & a_{12} & a_{11} & a_{2\ell_2} & \cdots & a_{22} & a_{21} & \cdots & a_{m\ell_m} & \cdots & a_{m1}
\end{array}
\right]
\]
such that $1 \leq a_{p1} \leq a_{p2} \leq \cdots \leq a_{p\ell_p} < n$ for all $1 \leq p \leq m$ (with possibly $\ell_p = 0$),
\begin{equation}
\label{eq:decreasing_factorization}
(a_{1\ell_1} \cdots a_{11}) (a_{2\ell_2} \cdots a_{21}) \cdots (a_{m\ell_m} \cdots a_{m1}) \equiv w,
\end{equation}
and $\sum_{p=1}^m \ell_p = k$.
Note that $\widehat{\HH}_{w,m}^k$ is equivalent to the ways of factorizing $w' \in \HH_w^k$ into $m$ weakly decreasing (possibly emtpy) factors as in Equation~\eqref{eq:decreasing_factorization}.
Let $\overline{\HH}_w^k$ denote the subset of $\widehat{\HH}_{w,m}^k$ such that $1 \leq a_{p1} < a_{p2} <\cdots < a_{p\ell_p} < n$ for all $1 \leq p \leq m$.
The strictly increasing condition $a_{p1} < \cdots < a_{p\ell_p}$ is equivalent to the notion of a compatible pair of words from~\cite{BKSTY08}.

Let $\mcP_w(\lambda)$ denote the set of increasing tableaux of shape $\lambda$ such that reading the entries of $P$ from top-to-bottom, right-to-left (\textit{i.e.}, also known as the Far-Eastern reading word) is equivalent to $w$ in the $0$-Hecke monoid.
Let $\svt(\lambda)_k$ (resp.~$\mvt(\lambda)_k$) denote the set of set-valued (resp. multiset-valued) tableaux $T$ such that $\abs{\wt{T}} = k$.
The \defn{(column) Hecke insertion} defined in~\cite{BKSTY08} is a bijection between $\overline{\HH}_{w,m}^k$ and $\bigsqcup_{\lambda} \mcP_w(\lambda) \times \svt(\lambda)_k$ (\cite[Lemma~1,Thm.~4]{BKSTY08}).\footnote{We note that the row Hecke insertion given in, \textit{e.g.},~\cite{PP16,sage,TY11} would yield a recording tableau that is the conjugate of a semistandard set-valued tableaux (\textit{i.e.}, rows would strictly increase and columns would weakly increase).}
Furthermore, we obtain the following from~\cite[Lemma~2]{BKSTY08}.

\begin{prop}
Hecke insertion defines a bijection between
\[
\widehat{\HH}_w^k \to \bigsqcup_{\lambda} \mcP_w(\lambda) \times \mvt(\lambda)_k.
\]
\end{prop}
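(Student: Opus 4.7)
The plan is to argue directly from the analog of [BKSTY08, Lemma~2] (and [BKSTY08, Thm.~4]), which is the set-valued version of the claim: column Hecke insertion establishes a bijection $\overline{\HH}_{w,m}^k \to \bigsqcup_\lambda \mcP_w(\lambda) \times \svt(\lambda)_k$ where the recording tableau is restricted to entries at most $m$. Our statement is the natural weak version, replacing compatible (strictly decreasing within a factor) arrays with weakly decreasing ones, and set-valued recording tableaux with multiset-valued ones; the insertion and recording procedures do not change.

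First, I would apply column Hecke insertion to the bottom row of an array in $\widehat{\HH}_{w,m}^k$ read left-to-right. Since the concatenated word lies in the $0$-Hecke class of $w$, the insertion tableau $P$ belongs to $\mcP_w(\lambda)$ for some $\lambda$, exactly as in the set-valued case. I then construct $Q$ of shape $\lambda$ by adding the label $p$ to the multiset in the box of $P$ modified at step $j$ whenever the $j$-th letter belongs to the $p$-th factor. When two letters of the same factor modify the same box, this naturally produces multiplicities in $Q$.

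The key step is to show that $Q \in \mvt(\lambda)_k$ and that the assignment is bijective. The weakly-increasing-row and strictly-increasing-column conditions on the distinct labels of $Q$, together with the reverse-Hecke-insertion inverse, are identical to the proof of [BKSTY08, Lemma~2]. The only new feature is the presence of repeated labels in one box $\bbb$. By direct inspection of the column Hecke bumping rule, when two letters $a \leq a'$ from the same factor are inserted consecutively, the second bump path ends weakly to the right of the first, and the two paths terminate in the same box precisely when $a = a'$. Therefore equal letters within a factor correspond bijectively to multiplicities at a single box of $Q$, so weakly decreasing factorizations match multiset-valued recording tableaux in the same way that strictly decreasing factorizations match set-valued ones.

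The inverse map is the reverse column Hecke insertion from [BKSTY08], applied in the obvious way: for $p = m, m-1, \dotsc, 1$, reverse-insert the boxes of $Q$ containing $p$ in the order used in the set-valued version, and read off one letter per copy of $p$. The main (and essentially only) obstacle is verifying the bumping-path claim for equal letters above, and possibly double-checking that reverse insertion of two copies of the same label from one box $\bbb$ produces two equal letters rather than a strictly decreasing pair; both are local checks on the column Hecke rules and require no new ideas beyond those already present in [BKSTY08].
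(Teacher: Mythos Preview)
Your approach coincides with the paper's: the paper records only that the statement follows from~[BKSTY08, Lemma~2] and gives no further argument, so you are essentially unpacking that citation.

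One claim needs care. You assert that for two consecutively inserted letters from the same factor, ``the two paths terminate in the same box precisely when $a = a'$.'' The forward implication is immediate from the $0$-Hecke relation $aa \equiv a$: re-inserting $a$ reproduces the same bumping path and leaves $P$ unchanged. The converse, however, is not a local check on a single bump; it is exactly the content of the strictly decreasing case in~[BKSTY08, Lemma~1] (the letters of a strictly decreasing factor record at pairwise distinct boxes), and you should cite that rather than claim it by direct inspection. Also, in a weakly decreasing factor the earlier-inserted of two consecutive letters is the larger, so your inequality $a \le a'$ should be reversed or the insertion order clarified. A tidier packaging, which also dispatches the reverse-insertion check you flag at the end: a weakly decreasing factor is Hecke-equivalent to the strictly decreasing factor obtained by deleting repetitions, and each repeated letter duplicates the previous recording box without altering $P$; hence the desired bijection factors through the set-valued bijection of~[BKSTY08, Lemma~1, Thm.~4] via the tautological correspondence between a multiset-valued tableau and its underlying set-valued tableau equipped with multiplicities.
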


\begin{dfn}
The \defn{weak stable Grothendieck polynomial} is defined to be
\begin{equation*}
\wG_w(\xx; \alpha) := \sum_{k=\ell(w)}^{\infty} \alpha^{k-\ell(w)} \sum_{(w,a) \in \widehat{\HH}_w^k} \prod_{i=1}^k x_{a_i},
\end{equation*}
where $a = a_1 a_2 \dotsm a_k \in \HH_w^k$.
\end{dfn}

The discussion above immediately implies that
\begin{equation*}
\wG_w(\xx; \alpha) = \sum_{k=\ell(w)}^{\infty} \alpha^{k-\ell(w)} \sum_{(P,\widehat{Q})} \wt(\widehat{Q}),
\end{equation*}
where we are summing over all $(P,\widehat{Q}) \in \bigsqcup_{\lambda} \mcP_w(\lambda) \times \mvt(\lambda)_k$. 
Putting this all together we obtain the following.

\begin{prop}
For any $w \in \sym_n$, we have
\begin{equation*}
\wG_w(\xx; \alpha) = \sum_{\lambda} \sum_{P \in \mcP_w(\lambda)} \sum_{k=\ell(w)}^{\infty} \alpha^{k-\ell(w)} \sum_{\widehat{Q}} s_{\wt(\widehat{Q})},
\end{equation*}
where we take the sum over all $\widehat{Q} \in \mvt(\lambda)_k$ such that $\widehat{Q}$ is a highest weight element.
Moreover, we have
\[
\wG_w(\xx; \alpha) = \sum_{\lambda} \alpha^{\abs{\lambda}-\ell(w)} \abs{\mcP_w(\lambda)} \wG_{\lambda}(\xx; \alpha).
\]
\end{prop}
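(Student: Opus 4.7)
The plan is to chain the Hecke-insertion bijection of the previous proposition with the crystal decomposition of $\mvt^n(\lambda)$ given in Theorem~\ref{thm:mvt_crystal}. First I would rewrite $\wG_w(\xx;\alpha)$ using the displayed equation just before the statement, which already expresses it as
\[
\wG_w(\xx;\alpha) = \sum_{k=\ell(w)}^\infty \alpha^{k-\ell(w)} \sum_{\lambda}\sum_{P \in \mcP_w(\lambda)} \sum_{\widehat{Q} \in \mvt(\lambda)_k} \wt(\widehat{Q}),
\]
pulling the sum over $\lambda$ and $P$ outside (they only control which $\mvt(\lambda)_k$ indexes $\widehat{Q}$, not its weight).

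Next I would invoke the crystal structure. For fixed $\lambda$ and $k$, Theorem~\ref{thm:mvt_crystal} (together with the fact that every connected component is a highest weight crystal $B(\mu)$ whose character is the Schur function $s_\mu$) gives
\[
\sum_{\widehat{Q} \in \mvt(\lambda)_k} \wt(\widehat{Q}) \;=\; \sum_{\widehat{Q}\text{ h.w.\ in }\mvt(\lambda)_k} s_{\wt(\widehat{Q})}(\xx),
\]
since the connected components partition $\mvt(\lambda)_k$ and each contributes the character of its unique highest weight element. Substituting this into the previous display produces exactly the first claimed identity.

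For the moreover clause, I would reorganize the triple sum and use that for a highest weight $\widehat{Q}$ of weight $\mu \supseteq \lambda$, one has $\abs{\widehat{Q}} = \abs{\mu}$, so
\[
\alpha^{k-\ell(w)} = \alpha^{\abs{\mu}-\abs{\lambda}}\cdot \alpha^{\abs{\lambda}-\ell(w)}.
\]
Factoring the $\lambda$-dependent scalar out and summing over $k$ (equivalently, over $\mu \supseteq \lambda$ with multiplicity $M_\lambda^\mu$), the inner double sum collapses by the second identity in Theorem~\ref{thm:mvt_crystal} to $\alpha^{\abs{\lambda}-\ell(w)} \wG_\lambda(\xx;\alpha)$, and the sum over $P \in \mcP_w(\lambda)$ gives the factor $\abs{\mcP_w(\lambda)}$.

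Since every ingredient (Hecke-insertion bijection, crystal decomposition of $\mvt^n(\lambda)$, character formula for $B(\mu)$) is already established, I do not anticipate a genuine obstacle; the only care needed is bookkeeping of the exponent of $\alpha$, in particular tracking that $\abs{\widehat{Q}} - \abs{\lambda}$ is the excess statistic used in the definition of $\wG_\lambda$ while $\abs{\widehat{Q}} - \ell(w) = k - \ell(w)$ is the one used in the definition of $\wG_w$, with the difference $\abs{\lambda}-\ell(w)$ absorbed into the scalar $\alpha^{\abs{\lambda}-\ell(w)}\abs{\mcP_w(\lambda)}$.
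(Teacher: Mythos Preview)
Your proposal is correct and matches the paper's intended argument: the paper offers no explicit proof but simply writes ``Putting this all together we obtain the following,'' meaning exactly the chain you describe---apply the Hecke-insertion bijection to get the $(P,\widehat{Q})$ expression, then replace the inner sum over $\widehat{Q}\in\mvt(\lambda)_k$ by the Schur expansion coming from the crystal decomposition of Theorem~\ref{thm:mvt_crystal}, and finally regroup the $\alpha$ powers to extract $\wG_\lambda$. Your bookkeeping of the exponent $k-\ell(w)=(\lvert\mu\rvert-\lvert\lambda\rvert)+(\lvert\lambda\rvert-\ell(w))$ is exactly the step needed for the ``moreover'' clause.
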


\section{Crystal structure on hook-valued tableaux}
\label{sec:hooks}

Now we prove our second main result, that the hook-valued tableaux of~\cite{Yel17} admits a $U_q(\fsl_n)$-crystal structure that is isomorphic to a direct sum of highest weight crystals.
We do so by constructing a common generalization of the crystal structures on $\svt^n(\lambda)$ and $\mvt^n(\lambda)$.
Thus, we define the reading word using the reading words on $\svt^n(\lambda)$ and $\mvt^n(\lambda)$.

\begin{dfn}[Reading word]
Let $T \in \hvt^n(\lambda)$.
Let $C$ be a column of $T$. Define the \defn{column reading word} $\rd(C)$ by first reading the extended leg from largest to smallest in each box from bottom-to-top in $C$, then reading the entries in the arm from smallest to largest in each box from top-to-bottom in $C$.
Define the \defn{reading word} $\rd(T) = \rd(C_1) \rd(C_2) \dotsm \rd(C_k)$, where $C_1, C_2, \dotsc, C_k$ are the columns of $T$ from left-to-right.
\end{dfn}

\begin{ex}
For the hook-valued tableau
\[
\ytableausetup{boxsize=2.6em}
T = \; \ytableaushort{{
\begin{array}{@{}l@{}} \ml{1}1 \\[-4pt] \ml{3} \end{array}
}{
\begin{array}{@{}l@{}} \ml{4} \\[-4pt] \ml{5} \end{array}
},{
\begin{array}{@{}l@{}} \ml{4}47 \\[-4pt] \ml{5} \\[-4pt] \ml{6} \end{array}
}{
\begin{array}{@{}l@{}} \ml{7}779 \end{array}
},{
\begin{array}{@{}l@{}} \ml{8}99 \\[-4pt] \ml{9} \end{array}
}}\,,
\]
we have
\[
\rd(T) = \mathbf{\color{darkred}9865431} 1 47 99 \mathbf{\color{darkred} 754} 779.
\]
\end{ex}

Similarly, we define crystal operators by combining the set-valued crystal operators and the multiset-valued crystal operators.

\begin{dfn}[Crystal operators]
\label{defn:hvt_crystal_ops}
Fix $T \in \hvt^n(\lambda)$ and $i \in I$.
Write $\bplus$ for each $i$ in $\rd(T)$ and $\bminus$ for each $i+1$ in $\rd(T)$ (ignore all other letters). 
Next, cancel signs in ordered pairs $\bminus \bplus$ until obtaining a sequence of the form $\bplus \cdots \bplus \bminus \cdots \bminus$ called the \defn{$i$-signature}.
\begin{description}
\item[\defn{$e_i T$}] If there is no $\bminus$ in the resulting sequence, then $e_i T = 0$. Otherwise let $\bbb$ correspond to the box of the leftmost uncanceled $\bminus$. Then $e_i T$ is given by one of the following:
\begin{itemize}
\item[(M)] if there exists a box $\bbb^{\uparrow}$ immediately above $\bbb$ that contains an $i$, then remove an $i + 1$ from $A(\bbb)$ and add $i$ to $A(\bbb^{\uparrow})$;
\item[(S)] otherwise if there exists a box $\bbb^{\leftarrow}$ immediately to the left of $\bbb$ that contains an $i+1$ in $L(\bbb^{\leftarrow})$, then remove that $i+1$ from $L(\bbb^{\leftarrow})$ and add an $i$ to $L^+(\bbb)$;
\item[(N)] otherwise replace the $i+1$ in $\bbb$ with an $i$.
\end{itemize}

\item[\defn{$f_i T$}] If there is no $\bplus$ in the resulting sequence, then $f_i T = 0$. Otherwise let $\bbb$ correspond to the box of the rightmost uncanceled $\bplus$. Then $f_i T$ is given by one of the following:
\begin{itemize}
\item[(M)] if there exists a box $\bbb^{\downarrow}$ immediately below $\bbb$ that contains an $i+1$, then remove the $i$ from $A(\bbb)$ and add an $i+1$ to $A(\bbb^{\downarrow})$;
\item[(S)] otherwise if there exists a box $\bbb^{\rightarrow}$ immediately to the right of $\bbb$ that contains an $i$ in $L^+(\bbb^{\rightarrow})$, then remove the $i$ from $L^+(\bbb^{\rightarrow})$ and add an $i+1$ to $L(\bbb)$;
\item[(N)] otherwise replace the $i$ in $\bbb$ with an $i+1$.
\end{itemize}
\end{description}
\end{dfn}

Since the hook element of $\bbb^{\rightarrow}$ is $\min L^+(\bbb^{\rightarrow})$, if the element $i$ we remove from $L^+(\bbb^{\rightarrow})$ happens to also be the hook element, we can unambiguously give the new hook element of $\bbb^{\rightarrow}$ as $\min L(\bbb^{\rightarrow})$.
Similarly, if the added element $i < \min L^+(\bbb)$, then $i$ becomes the new hook element in $\bbb$.
Furthermore, we note that Case~(M) (resp.~(S)) correspond to the multiset-valued (resp.\ set-valued) tableaux crystal operators.
In particular, we are in Case~(S) when the $i+1$ (resp.~$i$) we are acting upon in $\bbb$ by $e_i$ (resp.~$f_i$) is in $L^+(\bbb)$.

\begin{ex}
The following connected components in $\hvt^3(2\Lambda_1)$ are those that correspond to $\alpha\beta$ and both are isomorphic to $B(\Lambda_2 + 2\Lambda_1)$:
\[
\ytableausetup{boxsize=2.0em}
\begin{tikzpicture}[>=latex,xscale=1.9,yscale=1.5,every node/.style={scale=0.7}]
\node (hw) at (0,0) { $\ytableaushort{ 1{\begin{array}{@{}l@{}} 11 \\[-4pt] 2 \end{array}} }$ };
\node (f1) at (1,1) { $\ytableaushort{ 1{\begin{array}{@{}l@{}} 12 \\[-4pt] 2 \end{array}} }$ };
\node (f21) at (2,1) { $\ytableaushort{ 1{\begin{array}{@{}l@{}} 13 \\[-4pt] 2 \end{array}} }$ };
\node (f221) at (3,1) { $\ytableaushort{ 1{\begin{array}{@{}l@{}} 13 \\[-4pt] 3 \end{array}} }$ };
\node (f1221) at (4,1) { $\ytableaushort{ 1{\begin{array}{@{}l@{}} 23 \\[-4pt] 3 \end{array}} }$ };
\node (f11221) at (5,1) { $\ytableaushort{ 2{\begin{array}{@{}l@{}} 23 \\[-4pt] 3 \end{array}} }$ };
\node (f11) at (2,2) { $\ytableaushort{ {\begin{array}{@{}l@{}} 1 \\[-4pt] 2 \end{array}}{22} }$ };
\node (f211) at (3,2) { $\ytableaushort{ {\begin{array}{@{}l@{}} 1 \\[-4pt] 2 \end{array}}{23} }$ };
\node (f2211) at (4,2) { $\ytableaushort{ {\begin{array}{@{}l@{}} 1 \\[-4pt] 2 \end{array}}{33} }$ };
\node (f22211) at (5,2) { $\ytableaushort{ {\begin{array}{@{}l@{}} 1 \\[-4pt] 3 \end{array}}{33} }$ };
\node (f2) at (1,0) { $\ytableaushort{ 1{\begin{array}{@{}l@{}} 11 \\[-4pt] 3 \end{array}} }$ };
\node (f12) at (2,0) { $\ytableaushort{ 1{\begin{array}{@{}l@{}} 12 \\[-4pt] 3 \end{array}} }$ };
\node (f112) at (3,0) { $\ytableaushort{ 1{\begin{array}{@{}l@{}} 22 \\[-4pt] 3 \end{array}} }$ };
\node (f1112) at (4,0) { $\ytableaushort{ 2{\begin{array}{@{}l@{}} 22 \\[-4pt] 3 \end{array}} }$ };
\node (lw) at (6,2) { $\ytableaushort{ {\begin{array}{@{}l@{}} 2 \\[-4pt] 3 \end{array}}{33} }$ };
\draw[->,blue] (hw) -- node[midway,above left] {\small 1} (f1);
\draw[->,blue] (f1) -- node[midway,above left] {\small 1} (f11);
\draw[->,red] (hw) -- node[midway,above] {\small 2} (f2);
\draw[->,red] (f1) -- node[midway,above] {\small 2} (f21);
\draw[->,red] (f21) -- node[midway,above] {\small 2} (f221);
\draw[->,blue] (f21) -- node[midway,above left] {\small 1} (f211);
\draw[->,blue] (f221) -- node[midway,above] {\small 1} (f1221);
\draw[->,blue] (f1221) -- node[midway,above] {\small 1} (f11221);
\draw[->,red] (f11) -- node[midway,above] {\small 2} (f211);
\draw[->,red] (f211) -- node[midway,above] {\small 2} (f2211);
\draw[->,red] (f2211) -- node[midway,above] {\small 2} (f22211);
\draw[->,blue] (f22211) -- node[midway,above] {\small 1} (lw);
\draw[->,blue] (f2) -- node[midway,above] {\small 1} (f12);
\draw[->,blue] (f12) -- node[midway,above] {\small 1} (f112);
\draw[->,blue] (f112) -- node[midway,above] {\small 1} (f1112);
\draw[->,red] (f112) -- node[midway,above left] {\small 2} (f1221);
\draw[->,red] (f1112) -- node[midway,above left] {\small 2} (f11221);
\draw[->,red] (f11221) -- node[midway,above left] {\small 2} (lw);
\begin{scope}[yshift=-3cm]
\node (hw) at (0,0) { $\ytableaushort{ {11}{\begin{array}{@{}l@{}} 1 \\[-4pt] 2 \end{array}} }$ };
\node (f1) at (1,1) { $\ytableaushort{ {\begin{array}{@{}l@{}} 11 \\[-4pt] 2 \end{array}}2 }$ };
\node (f21) at (2,1) { $\ytableaushort{ {\begin{array}{@{}l@{}} 11 \\[-4pt] 2 \end{array}}3 }$ };
\node (f221) at (3,1) { $\ytableaushort{ {\begin{array}{@{}l@{}} 11 \\[-4pt] 3 \end{array}}3 }$ };
\node (f1221) at (4,1) { $\ytableaushort{ {\begin{array}{@{}l@{}} 12 \\[-4pt] 3 \end{array}}3 }$ };
\node (f11221) at (5,1) { $\ytableaushort{ {\begin{array}{@{}l@{}} 22 \\[-4pt] 3 \end{array}}3 }$ };
\node (f11) at (2,2) { $\ytableaushort{ {\begin{array}{@{}l@{}} 12 \\[-4pt] 2 \end{array}}2 }$ };
\node (f211) at (3,2) { $\ytableaushort{ {\begin{array}{@{}l@{}} 12 \\[-4pt] 2 \end{array}}3 }$ };
\node (f2211) at (4,2) { $\ytableaushort{ {\begin{array}{@{}l@{}} 13 \\[-4pt] 2 \end{array}}3 }$ };
\node (f22211) at (5,2) { $\ytableaushort{ {\begin{array}{@{}l@{}} 13 \\[-4pt] 3 \end{array}}3 }$ };
\node (f2) at (1,0) { $\ytableaushort{ {11}{\begin{array}{@{}l@{}} 1 \\[-4pt] 3 \end{array}} }$ };
\node (f12) at (2,0) { $\ytableaushort{ {11}{\begin{array}{@{}l@{}} 2 \\[-4pt] 3 \end{array}} }$ };
\node (f112) at (3,0) { $\ytableaushort{ {12}{\begin{array}{@{}l@{}} 2 \\[-4pt] 3 \end{array}} }$ };
\node (f1112) at (4,0) { $\ytableaushort{ {22}{\begin{array}{@{}l@{}} 2 \\[-4pt] 3 \end{array}} }$ };
\node (lw) at (6,2) { $\ytableaushort{ {\begin{array}{@{}l@{}} 23 \\[-4pt] 3 \end{array}}3 }$ };
\draw[->,blue] (hw) -- node[midway,above left] {\small 1} (f1);
\draw[->,blue] (f1) -- node[midway,above left] {\small 1} (f11);
\draw[->,red] (hw) -- node[midway,above] {\small 2} (f2);
\draw[->,red] (f1) -- node[midway,above] {\small 2} (f21);
\draw[->,red] (f21) -- node[midway,above] {\small 2} (f221);
\draw[->,blue] (f21) -- node[midway,above left] {\small 1} (f211);
\draw[->,blue] (f221) -- node[midway,above] {\small 1} (f1221);
\draw[->,blue] (f1221) -- node[midway,above] {\small 1} (f11221);
\draw[->,red] (f11) -- node[midway,above] {\small 2} (f211);
\draw[->,red] (f211) -- node[midway,above] {\small 2} (f2211);
\draw[->,red] (f2211) -- node[midway,above] {\small 2} (f22211);
\draw[->,blue] (f22211) -- node[midway,above] {\small 1} (lw);
\draw[->,blue] (f2) -- node[midway,above] {\small 1} (f12);
\draw[->,blue] (f12) -- node[midway,above] {\small 1} (f112);
\draw[->,blue] (f112) -- node[midway,above] {\small 1} (f1112);
\draw[->,red] (f112) -- node[midway,above left] {\small 2} (f1221);
\draw[->,red] (f1112) -- node[midway,above left] {\small 2} (f11221);
\draw[->,red] (f11221) -- node[midway,above left] {\small 2} (lw);
\end{scope}
\end{tikzpicture}
\]
Note that \scalebox{.7}{$\ytableaushort{ {\begin{array}{@{}l@{}} 11 \\[-4pt] 2 \end{array}}1 }$} is not semistandard.
\end{ex}

\begin{lemma}
\label{lemma:hvt_ops_well_defined}
Let $T, T' \in \hvt^n(\lambda)$. Then
\[
e_i T \in \hvt^n(\lambda) \sqcup \{0\},
\qquad
f_i T' \in \hvt^n(\lambda) \sqcup \{0\},
\qquad
e_i T = T' \Longleftrightarrow T = f_i T'.
\]
\end{lemma}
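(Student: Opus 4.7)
The plan is to mirror the proof of Lemma~\ref{lemma:mvt_ops_well_defined}, handling the three cases (M), (S), (N) of Definition~\ref{defn:hvt_crystal_ops} separately. First I would establish a reading-word compatibility analog of Lemma~\ref{lemma:fixed_reading}, showing that applying $e_i$ or $f_i$ changes the $i$-signature in a controlled way. In Case~(N) nothing moves; in Case~(M) the entry goes from the end of $A(\bbb)$ to the beginning of $A(\bbb^{\downarrow})$, which are adjacent in the reading word exactly as in Lemma~\ref{lemma:fixed_reading}. Case~(S) is more delicate because the modified entry jumps between columns: the removed $i$ sits at the end of the contribution from $L^+(\bbb^{\rightarrow})$ (being the smallest, hence last-read, entry of that strictly increasing set), while the new $i+1$ enters $L(\bbb)$ and is read first within $L^+(\bbb)$ (being the largest). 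I would track the cancellation pattern and show that the net effect on the $i$-signature is that the $\bplus$ from $\bbb$'s $i$ becomes a $\bminus$, with the former partners recanceling accordingly.

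Next I would verify semistandardness of $f_i T'$ case by case. For \textbf{Case~(N)} the failure of Case~(M) rules out an $i+1$ directly below in the column, while the failure of Case~(S) together with the choice of $\bbb$ as the rightmost uncanceled $\bplus$ rules out an $i$ anywhere in $\bbb^{\rightarrow}$: no $i$ in $L^+(\bbb^{\rightarrow})$ is the hypothesis that failed, and an $i$ in $A(\bbb^{\rightarrow})$ would furnish a later unpaired $\bplus$, contradicting the choice of $\bbb$; the hook shape within $\bbb$ is clearly preserved. For \textbf{Case~(M)} the argument is essentially verbatim from Lemma~\ref{lemma:mvt_ops_well_defined}, noting that $L^+(\bbb)$ and $L^+(\bbb^{\downarrow})$ are not touched. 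For \textbf{Case~(S)}, removing the $i$ from $L^+(\bbb^{\rightarrow})$ preserves strict column increase of $L^+(\bbb^{\rightarrow})$, and if the removed $i$ happened to be the hook entry then $\min L(\bbb^{\rightarrow})$ cleanly replaces it; adding $i+1$ to $L(\bbb)$ requires $i+1 \notin L^+(\bbb)$ previously, which I would deduce from the combination of the signature structure (so that $\bbb$ is truly the rightmost uncanceled $\bplus$) and the failure of Case~(M) (which forbids an $i+1$ in the column-neighbor below).

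Finally, the equivalence $e_i T = T' \iff T = f_i T'$ follows exactly as in Lemma~\ref{lemma:mvt_ops_well_defined}: in each of the three cases, $e_i$ inverts $f_i$ on the nose, and the signature compatibility ensures that the case triggered by $e_i$ applied to $f_i T'$ is precisely the one that produced $f_i T'$ from $T'$. The main obstacle is Case~(S): the cross-column nature of the move makes the signature bookkeeping and the semistandardness checks simultaneously subtle, since one must argue that the moved $i$'s partner in the cancellation picture shifts to the correct new partner after the move, and that the new $i+1$ in $L(\bbb)$ does not collide with any preexisting $i+1$ in $L^+(\bbb)$ or $A(\bbb^{\uparrow})$. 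Once this case is pinned down, the other cases and the symmetric $e_i$ statements are routine adaptations of Lemma~\ref{lemma:mvt_ops_well_defined}.
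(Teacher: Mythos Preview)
Your proposal is correct and follows essentially the same approach as the paper: dispatch Cases~(N) and~(M) by analogy with Lemma~\ref{lemma:mvt_ops_well_defined} and handle Case~(S) directly via semistandardness and signature bookkeeping. The only shortcut worth noting is that in Case~(S) the paper gets $i+1 \notin \bbb$ immediately from semistandardness (since $i \in L^+(\bbb^{\rightarrow})$ forces $\max \bbb \leq \min \bbb^{\rightarrow} \leq i$), bypassing the more circuitous signature-based argument you outline.
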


\begin{proof}
We can assume $e_i T \neq 0$ and $f_i T' \neq 0$ as the claim is trivial in these cases.
When we are in Case~(N) or Case(M) for the crystal operators, the proof of this is similar to the proof of Lemma~\ref{lemma:mvt_ops_well_defined}.
Thus, we assume the crystal operators are in Case~(S), and we assume the crystal operator acts on the box $\bbb$. Denote the following boxes around $\bbb$:
\[
\ytableaushort{{\bbb}{\bbb^{\rightarrow}},{\bbb^{\downarrow}}{\bbb^{\searrow}}}\,.
\]

Consider $f_i T'$. Since there exists an $i \in \bbb^{\rightarrow}$, there cannot exist an $i+1 \in \bbb$ by semistandardness. There also cannot be an $i+1 \in \bbb^{\downarrow}$ (if it exists) as otherwise we would be in Case~(M), and by semistandardness $i+1 \notin \bbb^{\searrow}$ (if it exists). We note that $i \in \bbb^{\rightarrow}$ must be canceled as otherwise we would be acting on $\bbb^{\rightarrow}$ by the signature rule. Hence, there must also exist an $i+1 \in L(\bbb^{\rightarrow})$. Thus, $f_i T'$ is defined and in $\hvt^n(\lambda)$.

From the above argument, we have that the $i+1 \in \bbb$ in $T = f_i T'$, which now cancels with the $i \in \bbb$. Thus the $i+1 \in L^+(\bbb^{\rightarrow})$ is now an unpaired $\bminus$, and by the semistandardness of $T'$, there does not exist an $i$ in the box immediately above $\bbb^{\rightarrow}$. Hence, we are in Case~(S) for $e_i T$ and clearly $e_i T = T'$.

The final claim $e_i T \in \hvt^n(\lambda)$ follows from the other two statements.
\end{proof}

Since either we have a pairing $\bminus \bplus$ or that $\varepsilon_i(T)$ (resp.~$\varphi_i(T)$) corresponds to the number of unpaired $\bminus$ (resp.~$\bplus$), which corresponds to an $i+1$ (resp.~$i$), we have
\[
\langle h_i, \wt(T) \rangle := m_i - m_{i+1} = \varphi_i(T) - \varepsilon_i(T),
\]
recall that $m_i$ is the number of $i$'s that appear in $T$.

\begin{thm}
\label{thm:hvt_crystal_structure}
Let $\lambda$ be a partition.
For any $T \in \hvt^n(\lambda)$ such that $T$ is a highest weight element, the closure of $T$ under the crystal operators is isomorphic to $B(\mu)$, where $\mu = \wt(T)$.
Moreover, we have
\[
\hvt^n(\lambda) \iso \bigoplus_{\mu \supseteq \lambda} B(\mu)^{\oplus H_{\lambda}^{\mu}},
\]
where $H_{\lambda}^{\mu}$ is the number of highest weight elements of weight $\mu$ in $\hvt^n(\lambda)$, and
\[
\hG_{\lambda}(\xx; \alpha, \beta) = \sum_{T} \alpha^{\sum_{\bbb \in T} \abs{A(\bbb)}} \beta^{\sum_{\bbb \in T} \abs{L(\bbb)}} s_{\wt(T)}(\xx),
\]
where the sum is taken over all highest weight elements in $\hvt^n(\lambda)$.
\end{thm}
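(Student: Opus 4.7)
My plan is to apply the Stembridge axioms (Theorem~\ref{thm:stembridge}), since, as noted in the introduction, the uncrowding/inflation type approach used for $\mvt^n(\lambda)$ and $\vst^n(\lambda)$ does not extend straightforwardly to hook-valued tableaux: the MVT crystal moves entries column-wise while the SVT crystal moves them row-wise, and a single connected component of $\hvt^n(\lambda)$ genuinely mixes both behaviors. Accordingly, rather than building an explicit isomorphism with $B(\mu)$, I will check conditions (P1)--(P6') directly from Definition~\ref{defn:hvt_crystal_ops} and let Stembridge's theorem do the work of assembling each connected component into an irreducible highest weight crystal.

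First I would dispense with the easy axioms. Lemma~\ref{lemma:hvt_ops_well_defined} already gives (P2), and (P1) together with the finiteness statement follow from the fact that each application of $f_i$ either changes an $i$ to $i+1$ in the same box or moves an entry strictly downward, both of which strictly decrease the statistic $\sum_{\bbb} (\text{number of } i\text{'s in } \bbb) - (\text{number of } i+1\text{'s in } \bbb)$; since weights live in a finite lattice, no cycles occur. Axiom (P3) is automatic from the signature rule once one checks that changing a $\bplus$ to a $\bminus$ in the $i$-signature contributes $-1$ to the exponent of $x_i$ and $+1$ to the exponent of $x_{i+1}$, giving the correct entry of the Cartan matrix. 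To produce highest weight elements in each component, the same weight-decrease argument shows every connected component contains an element annihilated by all $e_i$.

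The main obstacle is axioms (P4), (P5), (P5'), (P6), (P6'), which must be verified by a case analysis on the interaction between $e_i$ (or $f_i$) and $e_j$ (or $f_j$) for $j = i \pm 1$ (the case $|i-j| > 1$ is immediate because the two signatures involve disjoint letters and the affected boxes cannot coincide or be adjacent in a way that interferes). For $j = i+1$, I would enumerate the three sub-cases (M), (S), (N) for each operator, using the reading-word conventions to track how the $i$-signature changes after applying $e_j$ (or vice versa). The key observations are: the multiset-valued case (M) only affects arm entries and moves content vertically, so it commutes cleanly with operators that act on extended legs of non-adjacent boxes; the set-valued case (S) moves content between adjacent columns within a row, mirroring the analysis from~\cite[Sec.~3]{MPS18}; and Case (N) reduces to the ``same box'' analysis familiar from semistandard tableaux. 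The compatibility of the reading word (reading extended leg bottom-to-top, then arm top-to-bottom) ensures that an $i$ appearing in the arm of a box $\bbb$ is always paired to the right of the $i$'s in $L^+(\bbb)$, which is precisely what forces the priority ordering (M) before (S) before (N) in Definition~\ref{defn:hvt_crystal_ops} and makes the Stembridge diamond and hexagon relations close.

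Once (P1)--(P6') are established, Theorem~\ref{thm:stembridge} gives that each connected component of $\hvt^n(\lambda)$ containing a highest weight element $T$ is isomorphic to $B(\wt(T))$, proving the first claim. Summing over connected components gives the direct sum decomposition with multiplicities $H_{\lambda}^{\mu}$ equal to the count of highest weight elements of weight $\mu$. Finally, because the character of $B(\mu)$ is $s_{\mu}(\xx)$ and the $\alpha^{\abs{A(T)}}\beta^{\abs{L(T)}}$ statistic is preserved along each $i$-arrow (since crystal operators alter only the multiplicity of $i$ and $i+1$, not the arm/leg partition), grouping the defining sum $\hG_{\lambda}(\xx;\alpha,\beta) = \sum_T \alpha^{\abs{A(T)}} \beta^{\abs{L(T)}} \wt(T)$ by connected component yields the stated Schur expansion.
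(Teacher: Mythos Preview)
Your approach is essentially the same as the paper's: both verify the Stembridge axioms directly, obtain (P1) from weight considerations and finiteness, (P2) from Lemma~\ref{lemma:hvt_ops_well_defined}, (P3)--(P4) from the signature rule, and then do a case analysis on the interaction of $e_i$ and $e_j$ for $j = i \pm 1$ split across the three cases (M), (S), (N). The paper carries out the (P5)/(P6) analysis somewhat more concretely than your sketch---in particular, for (P6) it enumerates nine explicit local configurations of the letters $i,\,i{+}1,\,i{+}2$ and reduces everything else to the crystal of words via the reading-word compatibility---but your observations about preservation of the arm/leg statistics and the existence of highest weight elements match the paper's concluding paragraph exactly.
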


\begin{proof}
Let $T \in \hvt^n(\lambda)$.

Clearly we have $\wt(f_i T) = \wt(T) \cdot \alpha_i^{-1}$, where $\alpha_i = x_i x_i^{-1}$ (this is the multiplicative version of one of the crystal axioms).
Thus, (P1) is satisfied by weight considerations and that $\hvt^n(\lambda)$ is a finite set and closed under the crystal operators (Lemma~\ref{lemma:mvt_ops_well_defined}).
(P2) is satisfied by Lemma~\ref{lemma:mvt_ops_well_defined}.

If $\abs{i - j} > 1$ (so $A_{ij} = 0$), then by the signature rule, we have $\Delta_i \delta_j(T) = \Delta_i \varphi_j(T) = 0$.
Now assume $\abs{i-j} = 1$. Then we have introduced an extra $i$ in the $j$-signature of $e_i T$. So by the signature rule, we have $\bigl( \Delta_i \delta_j(T), \Delta_i \varphi_j(T) \bigr) = (-1, 0), (0, -1)$ depending on if $j = i \pm 1$ and if the $i$ cancels in the $j$-signature or not. Hence, (P3) and (P4) are satisfied.


To show (P5), if $\abs{i - j} > 1$, then the signature rule implies (P5) since the $j$-signatures of $T$ and $e_i T$ are equal and similarly for the $i$-signatures of $T$ and $e_j T$. Furthermore, it is clear that $y := e_j e_i T = e_i e_j T$ with $\nabla_j \varphi_i(y) = 0$.

Therefore, assume $j = i \pm 1$ and $\Delta_i \delta_j(T) = 0$, that is to say $\varepsilon_j(e_i T) = \varepsilon_j(T)$. Hence, the $j$-signature of $e_i T$ is formed by either removing an uncanceled $\bplus$\footnote{The $e_i$ might act by removing a canceling $\bplus$ further to the left of the uncanceled $\bplus$ that is ultimately removed.} (if $j = i+1$) or adding a canceling $\bminus$ (if $j = i-1$) to the $j$-signature of $T$ at position $p'$. Note that only Case~(S) for $e_k$ moves a letter $k+1$ in position $\widetilde{p}$ in the reading word (as Lemma~\ref{lemma:fixed_reading} naturally extends to this setting) immediately to the right of the first $k+1$ to the right of $\widetilde{p}$, where it becomes a $k$. In any case, this does not affect the leftmost uncanceled $\bminus$ at position $p$, and hence $e_j$ acts on a $j+1$ in the same position $p$ in $\rd(T)$ and $\rd(e_i T)$. Note that we must have $p' < p$. Therefore, we form the $i$-signature of $e_j T$ from the $i$-signature of $T$ by adding an uncanceled $\bminus$ to the right of $p'$, which contributes the leftmost uncanceled $\bminus$ (which remains the leftmost uncanceled $\bminus$). Hence, $e_i$ acts on the same $i$ in $T$ and $e_j T$.

Next, suppose $e_i$ acts on box $\bbb$ in $T$. Note that $e_{i+1}$ cannot add an $i+1$ to the box $\bbb^{\leftarrow}$ immediately to the left of $\bbb$ unless there was already an $i+1 \in \bbb^{\leftarrow}$. Since $e_i$ acts on the same $i+1$ in $T$ and $e_j T$, the $e_{i-1}$ cannot move the only $i$ from the box $\bbb^{\uparrow}$ immediately above $\bbb$. Hence, $e_i$ acts using the same case in $T$ as in $e_j T$, and similarly for $e_j$ acting by the same case in $T$ and $e_i T$.
Therefore, we have $y := e_j e_i T = e_i e_j T$ with $\nabla_j \varphi_i(y) = 0$.

Now we show (P6), and so we assume $\Delta_i \delta_j(T) = \Delta_j \delta_i(T) = -1$. Due to the symmetry, we assume $j = i+1$ without loss of generality.
We must form the $i$-signature (resp.\ $(i+1)$-signature) of $e_{i+1} T$ (resp.~$e_i T$) from the $i$-signature (resp.\ $(i+1)$-signature) of $T$ by adding an uncanceled $\bminus$ (resp.\ removing a canceling $\bplus$).
As a result, if $e_i$ (resp.~$e_{i+1}$) acts on position $p$ (resp.~$p'$) in the reduced word, we must have $p < p'$ with an additional $i+1$ before position $p$.
Let $k = i+2$. By semistandardness, we have one of the following configurations on $i, j, k$ (where we ignore all other letters):
\[
\begin{array}{c@{\quad\;}c@{\quad\;}c@{\quad\;}c@{\quad\;}c@{\quad\;}c}
\ytableausetup{boxsize=2.0em}
\ytableaushort{ \none{*(black!20) < i},{*(black!20) \leq i}{\begin{array}{@{}l@{}} {j}{k} \\[-2pt] {k} \end{array}} }\,,
&
\ytableaushort{ \none{*(black!20) < i},{\begin{array}{c} i \\[-2pt] j \end{array}}{\begin{array}{@{}l@{}} jk \\[-2pt] k \end{array}} }\,,
&
\ytableaushort{ {*(black!20) < i}{*(black!20) \leq i},{\begin{array}{c} j \\[-2pt] k \end{array}}{k} }\,,
&
\ytableaushort{ \none{*(black!20)<i},{\begin{array}{@{}l@{}l@{}} \ast & j \\[-2pt] k \end{array}}k }\,,
&
\ytableaushort{ \none{*(black!20) < i},{\leq i}{jk},k }\,,
&
\ytableaushort{ {*(black!20) < i}{*(black!20) \leq i},jk,k }\,,
\\[25pt]
\ytableaushort{ \none{\ast ij},{\begin{array}{@{}l@{}l@{}} {\ast}&{j} \\[-2pt] {k} \end{array}}{kk} }\,,
&
\ytableaushort{ ij,{\begin{array}{@{}l@{}l@{}} j&k \\[-2pt] k \end{array}}{k} }\,,
&
\ytableaushort{{\none}j,{\scalebox{0.75}{$\begin{array}{@{}l@{}l@{}} i & k \\[-4pt] j \\[-1.5pt] k \end{array}$}}k}\,,
\end{array}
\]
and with the boxes not necessarily adjacent,
where $e_i$ (resp.~$e_j$) acts on the rightmost $j$ (resp.~$k$), the shaded boxes do not necessarily exist, and $\ast \leq i$ (with possibly more than one entry in the arm).
Note that if the boxes are not adjacent where a Case~(S) occurs in the above configurations, then we cannot have Case~(S) occur in any of the crystal operators, and so the result holds from Lemma~\ref{lemma:fixed_reading}, which holds in this setting for Case~(M) or Case~(N), and the crystal of words.
Similarly, if a Case~(S) occurs in a box that does not have one of the local configuration, then the result follows from the crystal of words and the boxes do not interact.
Thus, it is a finite check to see that we have $y := e_i e_j^2 e_i T = e_j e_i^2 e_j T$ and $\nabla_i \varphi_j(y) = \nabla_j \varphi_i(y) = -1$.

The proof of (P5') and (P6') is similar. Hence the Stembridge axioms hold. Note that the crystal operators preserve the sum of the arm lengths and the sum of the leg lengths. Therefore, the exponent of $x_i$ in the weight of any $T$ in a connected component for $\alpha^a \beta^b$ is bounded above by $a+b$, and so every connected component has a highest weight element. Therefore, we have
\[
\hvt^n(\lambda) \iso \bigoplus_{\mu \supseteq \lambda} B(\mu)^{\oplus H_{\lambda}^{\mu}},
\quad
\hG_{\lambda}(\xx; \alpha, \beta) = \sum_{T} \alpha^{\sum_{\bbb \in T} \abs{A(\bbb)}} \beta^{\sum_{\bbb \in T} \abs{L(\bbb)}} s_{\wt(T)}(\xx),
\]
by Theorem~\ref{thm:stembridge}.
\end{proof}

As an immediate consequence of Theorem~\ref{thm:hvt_crystal_structure}, we have that $\hG_{\lambda}(\xx; \alpha, \beta)$ is Schur positive.

\begin{ex}
We have the following local relation in $\hvt^3(2\Lambda_1)$ on the left and their corresponding reading words on the right:
\[
\ytableausetup{boxsize=2.7em}
\begin{tikzpicture}[>=latex,xscale=3.5,yscale=2,every node/.style={scale=0.75},baseline=0]
\node (b) at (0,0) { $\ytableaushort{ {\begin{array}{@{}l@{}} 11 \\[-4pt] 2 \end{array}}{\begin{array}{@{}l@{}} 2 \\[-4pt] 3 \end{array}} }$ };
\node (f1) at (1,0) { $\ytableaushort{ {\begin{array}{@{}l@{}} 12 \\[-4pt] 2 \end{array}}{\begin{array}{@{}l@{}} 2 \\[-4pt] 3 \end{array}} }$ };
\node (f2) at (0,-1) { $\ytableaushort{ {\begin{array}{@{}l@{}} 11 \\[-4pt] 2 \\[-4pt] 3 \end{array}}{\begin{array}{@{}l@{}} 3 \end{array}} }$ };
\node (bp) at (1,-1) { $\ytableaushort{ {\begin{array}{@{}l@{}} 12 \\[-4pt] 2 \\[-4pt] 3  \end{array}}{\begin{array}{@{}l@{}} 3\end{array}} }$ };
\draw[->,blue] (b) -- node[midway,above] {\small 1} (f1);
\draw[->,red] (b) -- node[midway,left] {\small 2} (f2);
\draw[->,blue] (f2) -- node[midway,above] {\small 1} (bp);
\draw[->,red] (f1) -- node[midway,left] {\small 2} (bp);
\end{tikzpicture}
\qquad\qquad
\begin{tikzpicture}[>=latex,xscale=3.5,yscale=2,baseline=0]
\node (b) at (0,0) { $21132$ };
\node (f1) at (1,0) { $21232$ };
\node (f2) at (0,-1) { $32113$ };
\node (bp) at (1,-1) { $32123$ };
\draw[->,dotted,blue] (b) -- node[midway,above] {\scriptsize 1} (f1);
\draw[->,dotted,red] (b) -- node[midway,left] {\scriptsize 2} (f2);
\draw[->,dotted,blue] (f2) -- node[midway,above] {\scriptsize 1} (bp);
\draw[->,dotted,red] (f1) -- node[midway,left] {\scriptsize 2} (bp);
\end{tikzpicture}
\]
Let $T$ be the upper-left hook-valued tableau.
We note that while the position of the $1$ that is acted on by $f_1$ in $T$ and $f_2 T$, it is still the second $1$ in the reading word. Thus $\varphi_1(T) = \varphi_1(f_2 T)$, and we have $f_1 f_2 T = f_2 f_1 T$.
\end{ex}

\section{Crystal structure on valued-set tableaux}
\label{sec:vst}

In this section, we define a $U_q(\fsl_n)$-crystal structure on valued-set tableaux that is isomorphic to a direct sum of highest weight crystals. Furthermore, we give an explicit crystal isomorphism with the usual crystal structure on semistandard Young tableaux through the inflation map.

\subsection{Crystal structure}

\begin{dfn}[Reading word]
Let $T \in \vst^n(\lambda)$. Define the \defn{reading word} $\rd(T)$ to be the reading word of the usual reverse Far-Eastern (bottom-to-top, left-to-right) reading word of the tableau where we only consider the buoy entries.
\end{dfn}

\begin{ex}
\label{ex:buoy_reading_word}
Consider the valued-set tableau
\[
T = 
\begin{tikzpicture}[scale=0.5,baseline=-30]
\foreach \i/\r in {0/10,1/10,2/9,3/5,4/3,5/2}
    \draw (0,-\i) -- (\r,-\i);
\draw (0,0) -- (0,-5);
\draw (1,-2) -- (1,-3);
\draw (2,0) -- (2,-1);
\draw (2,-2) -- (2,-5);
\draw (3,-1) -- (3,-4);
\draw (5,-1) -- (5,-3);
\draw (6,0) -- (6,-2);
\draw (7,0) -- (7,-1);
\draw (8,-1) -- (8,-2);
\draw (9,0) -- (9,-2);
\draw (10,0) -- (10,-1);
\draw (0.5,-0.5) node {$\mathbf{\color{darkred}1}$};
\draw (2.5,-0.5) node {$\mathbf{\color{darkred}1}$};
\foreach \i in {1,3,4,5}
    \draw (\i+0.5,-0.5) node[gray] {$1$};
\draw (6.5,-0.5) node {$\mathbf{\color{darkred}2}$};
\draw (0.5,-1.5) node {$\mathbf{\color{darkred}2}$};
\foreach \i in {1,2}
    \draw (\i+0.5,-1.5) node[gray] {$2$};
\draw (0.5,-2.5) node {$\mathbf{\color{darkred}3}$};
\draw (3.5,-1.5) node {$\mathbf{\color{darkred}3}$};
\draw (4.5,-1.5) node[gray] {$3$};
\draw (7.5,-0.5) node {$\mathbf{\color{darkred}4}$};
\draw (8.5,-0.5) node[gray] {$4$};
\draw (5.5,-1.5) node {$\mathbf{\color{darkred}4}$};
\draw (1.5,-2.5) node {$\mathbf{\color{darkred}4}$};
\draw (6.5,-1.5) node {$\mathbf{\color{darkred}5}$};
\draw (7.5,-1.5) node[gray] {$5$};
\draw (8.5,-1.5) node {$\mathbf{\color{darkred}5}$};
\draw (2.5,-2.5) node {$\mathbf{\color{darkred}5}$};
\draw (3.5,-2.5) node {$\mathbf{\color{darkred}5}$};
\draw (4.5,-2.5) node[gray] {$5$};
\draw (0.5,-3.5) node {$\mathbf{\color{darkred}5}$};
\draw (1.5,-3.5) node[gray] {$5$};
\draw (9.5,-0.5) node {$\mathbf{\color{darkred}6}$};
\draw (0.5,-4.5) node {$\mathbf{\color{darkred}7}$};
\draw (1.5,-4.5) node[gray] {$7$};
\draw (2.5,-3.5) node {$\mathbf{\color{darkred}8}$};
\end{tikzpicture}\,,
\]
where we have written the buoys in bold, and we have
\[
\rd(T) = 75321485153452456,
\qquad\qquad
\wt(T) = x_1^2 x_2^2 x_3^2 x_4^3 x_5^5 x_6 x_7 x_8.
\]
\end{ex}

\begin{dfn}[Crystal operators]
Fix $T \in \hvt^n(\lambda)$ and $i \in I$.
Write $\bplus$ for each $i$ in $\rd(T)$ and $\bminus$ for each $i+1$ in $\rd(T)$ (ignore all other letters). 
Next, cancel signs in ordered pairs $\bminus \bplus$ until obtaining a sequence of the form $\bplus \cdots \bplus \bminus \cdots \bminus$ called the \defn{$i$-signature}.

\begin{description}
\item[\defn{$e_i T$}] If there is no $\bminus$ in the resulting sequence, then $e_i T = 0$. Otherwise let $\gp$ correspond to the group of the leftmost uncanceled $\bminus$. Then $e_i T$ is given by one of the following:
\begin{itemize}
\item if the entry immediately above the anchor of $\gp$ is an $i$, move the divider between $\gp$ and the group immediately to the left $\gp$ one step up;
\item otherwise change every $i+1$ to an $i$ in $\gp$.
\end{itemize}

\item[\defn{$f_i T$}] If there is no $\bplus$ in the resulting sequence, then $f_i T = 0$. Otherwise let $\bbb$ correspond to the box of the rightmost uncanceled $\bplus$. Then $f_i T$ is given by one of the following:
\begin{itemize}
\item if the entry immediately below the anchor of $\gp$ in an $i+1$, move the divider between $\gp$ and the group immediately to the left $\gp$ one step down;
\item otherwise change every $i$ to an $i+1$ in $\gp$.
\end{itemize}
\end{description}
\end{dfn}

\begin{ex}
The following is a connected component in $\vst^3(3\Lambda_2)$ corresponding to $\alpha^2$ and is isomorphic to $B(\Lambda_2 + 2 \Lambda_1)$:
\[
\ytableausetup{boxsize=1.5em,tabloids}
\begin{tikzpicture}[>=latex,xscale=1.85,yscale=1.5]
\newcommand{\tab}[1]{\begin{tikzpicture}[scale=0.35,every node/.style={scale=0.8}] \draw (0,0) rectangle (3,-2); \draw (0,-1) -- (3,-1); #1 \end{tikzpicture}}
\node (hw) at (0,0) { $\tab{
  \foreach \x in {1,2,3} {
    \draw (\x-0.5, -0.5) node {$1$};
    \draw (\x-0.5, -1.5) node {$2$};
  }
  \draw(1,0) -- (1,-1);
  \draw(2,0) -- (2,-1);
}$ };
\node (f1) at (1,1) { $\tab{
  \foreach \x in {1,2,3} {
    \draw (\x-0.5, -0.5) node {$1$};
    \draw (\x-0.5, -1.5) node {$2$};
  }
  \draw(1,0) -- (1,-1);
  \draw(2,-1) -- (2,-2);
}$ };
\node (f21) at (2,1) { $\tab{
  \foreach \x in {1,2,3}
    \draw (\x-0.5, -0.5) node {$1$};
  \draw (1-0.5, -1.5) node {$2$};
  \draw (2-0.5, -1.5) node {$2$};
  \draw (3-0.5, -1.5) node {$3$};
  \draw(1,0) -- (1,-1);
  \draw(2,-1) -- (2,-2);
}$ };
\node (f221) at (3,1) { $\tab{
  \foreach \x in {1,2,3} {
    \draw (\x-0.5, -0.5) node {$1$};
    \draw (\x-0.5, -1.5) node {$3$};
  }
  \draw(1,0) -- (1,-1);
  \draw(2,-1) -- (2,-2);
}$ };
\node (f1221) at (4,1) { $\tab{
  \foreach \x in {1,2,3}
    \draw (\x-0.5, -1.5) node {$3$};
  \draw (1-0.5, -0.5) node {$1$};
  \draw (2-0.5, -0.5) node {$2$};
  \draw (3-0.5, -0.5) node {$2$};
  \draw(1,0) -- (1,-1);
  \draw(2,-1) -- (2,-2);
}$ };
\node (f11221) at (5,1) { $\tab{
  \foreach \x in {1,2,3} {
    \draw (\x-0.5, -0.5) node {$2$};
    \draw (\x-0.5, -1.5) node {$3$};
  }
  \draw(1,0) -- (1,-1);
  \draw(2,-1) -- (2,-2);
}$ };
\node (f11) at (2,2) { $\tab{
  \foreach \x in {1,2,3} {
    \draw (\x-0.5, -0.5) node {$1$};
    \draw (\x-0.5, -1.5) node {$2$};
  }
  \draw(1,-1) -- (1,-2);
  \draw(2,-1) -- (2,-2);
}$ };
\node (f211) at (3,2) { $\tab{
  \foreach \x in {1,2,3} {
    \draw (\x-0.5, -0.5) node {$1$};
  }
  \draw (1-0.5, -1.5) node {$2$};
  \draw (2-0.5, -1.5) node {$2$};
  \draw (3-0.5, -1.5) node {$3$};
  \draw(1,-1) -- (1,-2);
  \draw(2,-1) -- (2,-2);
}$ };
\node (f2211) at (4,2) { $\tab{
  \foreach \x in {1,2,3} {
    \draw (\x-0.5, -0.5) node {$1$};
  }
  \draw (1-0.5, -1.5) node {$2$};
  \draw (2-0.5, -1.5) node {$3$};
  \draw (3-0.5, -1.5) node {$3$};
  \draw(1,-1) -- (1,-2);
  \draw(2,-1) -- (2,-2);
}$ };
\node (f22211) at (5,2) { $\tab{
  \foreach \x in {1,2,3} {
    \draw (\x-0.5, -0.5) node {$1$};
    \draw (\x-0.5, -1.5) node {$3$};
  }
  \draw(1,-1) -- (1,-2);
  \draw(2,-1) -- (2,-2);
}$ };
\node (f2) at (1,0) { $\tab{
  \foreach \x in {1,2,3} {
    \draw (\x-0.5, -0.5) node {$1$};
    \draw (\x-0.5, -1.5) node {$3$};
  }
  \draw(1,0) -- (1,-1);
  \draw(2,0) -- (2,-1);
}$ };
\node (f12) at (2,0) { $\tab{
  \draw (1-0.5, -0.5) node {$1$};
  \draw (2-0.5, -0.5) node {$1$};
  \draw (3-0.5, -0.5) node {$2$};
  \foreach \x in {1,2,3} {
    \draw (\x-0.5, -1.5) node {$3$};
  }
  \draw(1,0) -- (1,-1);
  \draw(2,0) -- (2,-1);
}$ };
\node (f112) at (3,0) { $\tab{
  \draw (1-0.5, -0.5) node {$1$};
  \draw (2-0.5, -0.5) node {$2$};
  \draw (3-0.5, -0.5) node {$2$};
  \foreach \x in {1,2,3} {
    \draw (\x-0.5, -1.5) node {$3$};
  }
  \draw(1,0) -- (1,-1);
  \draw(2,0) -- (2,-1);
}$ };
\node (f1112) at (4,0) { $\tab{
  \foreach \x in {1,2,3} {
    \draw (\x-0.5, -0.5) node {$2$};
    \draw (\x-0.5, -1.5) node {$3$};
  }
  \draw(1,0) -- (1,-1);
  \draw(2,0) -- (2,-1);
}$ };
\node (lw) at (6,2) { $\tab{
  \foreach \x in {1,2,3} {
    \draw (\x-0.5, -0.5) node {$2$};
    \draw (\x-0.5, -1.5) node {$3$};
  }
  \draw(1,-1) -- (1,-2);
  \draw(2,-1) -- (2,-2);
}$ };
\draw[->,blue] (hw) -- node[midway,above left] {\scriptsize 1} (f1);
\draw[->,blue] (f1) -- node[midway,above left] {\scriptsize 1} (f11);
\draw[->,red] (hw) -- node[midway,above] {\scriptsize 2} (f2);
\draw[->,red] (f1) -- node[midway,above] {\scriptsize 2} (f21);
\draw[->,red] (f21) -- node[midway,above] {\scriptsize 2} (f221);
\draw[->,blue] (f21) -- node[midway,above left] {\scriptsize 1} (f211);
\draw[->,blue] (f221) -- node[midway,above] {\scriptsize 1} (f1221);
\draw[->,blue] (f1221) -- node[midway,above] {\scriptsize 1} (f11221);
\draw[->,red] (f11) -- node[midway,above] {\scriptsize 2} (f211);
\draw[->,red] (f211) -- node[midway,above] {\scriptsize 2} (f2211);
\draw[->,red] (f2211) -- node[midway,above] {\scriptsize 2} (f22211);
\draw[->,blue] (f22211) -- node[midway,above] {\scriptsize 1} (lw);
\draw[->,blue] (f2) -- node[midway,above] {\scriptsize 1} (f12);
\draw[->,blue] (f12) -- node[midway,above] {\scriptsize 1} (f112);
\draw[->,blue] (f112) -- node[midway,above] {\scriptsize 1} (f1112);
\draw[->,red] (f112) -- node[midway,above left] {\scriptsize 2} (f1221);
\draw[->,red] (f1112) -- node[midway,above left] {\scriptsize 2} (f11221);
\draw[->,red] (f11221) -- node[midway,above left] {\scriptsize 2} (lw);
\end{tikzpicture}
\ytableausetup{notabloids}
\]
and a connected component isomorphic to $B(2\Lambda_2)$:
\[
\newcommand{\tab}[1]{\begin{tikzpicture}[scale=0.35,every node/.style={scale=0.8}] \draw (0,0) rectangle (3,-2); \draw (0,-1) -- (3,-1); \draw (1,0) -- (1,-2); #1 \end{tikzpicture}}
\begin{tikzpicture}[>=latex,xscale=1.85,yscale=1.5]
\node (hw) at (0,0) { $\tab{
  \foreach \x in {2,3} {
    \draw (\x-0.5, -0.5) node {$1$};
    \draw (\x-0.5, -1.5) node {$2$};
  }
  \draw (0.5,-0.5) node {$1$};
  \draw (0.5,-1.5) node {$2$};
} $};
\node (f2) at (1,0) { $\tab{
  \foreach \x in {2,3} {
    \draw (\x-0.5, -0.5) node {$1$};
    \draw (\x-0.5, -1.5) node {$3$};
  }
  \draw (0.5,-0.5) node {$1$};
  \draw (0.5,-1.5) node {$2$};
} $};
\node (f22) at (2,0) { $\tab{
  \foreach \x in {2,3} {
    \draw (\x-0.5, -0.5) node {$1$};
    \draw (\x-0.5, -1.5) node {$3$};
  }
  \draw (0.5,-0.5) node {$1$};
  \draw (0.5,-1.5) node {$3$};
} $};
\node (f12) at (2,1) { $\tab{
  \foreach \x in {2,3} {
    \draw (\x-0.5, -0.5) node {$2$};
    \draw (\x-0.5, -1.5) node {$3$};
  }
  \draw (0.5,-0.5) node {$1$};
  \draw (0.5,-1.5) node {$2$};
} $};
\node (f122) at (3,1) { $\tab{
  \foreach \x in {2,3} {
    \draw (\x-0.5, -0.5) node {$2$};
    \draw (\x-0.5, -1.5) node {$3$};
  }
  \draw (0.5,-0.5) node {$1$};
  \draw (0.5,-1.5) node {$3$};
} $};
\node (f1122) at (4,1) { $\tab{
  \foreach \x in {2,3} {
    \draw (\x-0.5, -0.5) node {$2$};
    \draw (\x-0.5, -1.5) node {$3$};
  }
  \draw (0.5,-0.5) node {$2$};
  \draw (0.5,-1.5) node {$3$};
} $};
\draw[->,red] (hw) -- node[midway,above] {\scriptsize 2} (f2);
\draw[->,red] (f2) -- node[midway,above] {\scriptsize 2} (f22);
\draw[->,blue] (f2) -- node[midway,above left] {\scriptsize 1} (f12);
\draw[->,red] (f12) -- node[midway,above] {\scriptsize 2} (f122);
\draw[->,blue] (f22) -- node[midway,above left] {\scriptsize 1} (f122);
\draw[->,blue] (f122) -- node[midway,above] {\scriptsize 1} (f1122);
\end{tikzpicture}
\]
\end{ex}

\begin{lemma}
\label{lemma:crystal_structure_vst}
Let $T, T' \in \vst^n(\lambda)$. Then, we have
\[
e_i T \in \vst^n(\lambda) \sqcup \{0\},
\qquad
f_i T' \in \vst^n(\lambda) \sqcup \{0\},
\qquad
e_i T = T' \Longleftrightarrow T = f_i T',
\]
and $\rd$ is a strict crystal embedding of $\vst^n(\lambda)$
\end{lemma}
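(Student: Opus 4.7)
The plan is to prove the lemma in three steps. First, I would show that the reading word $\rd$ intertwines the crystal operators on $\vst^n(\lambda)$ with the standard operators on words, in the sense that $\rd(f_iT) = f_i\rd(T)$ whenever $f_iT \neq 0$ (and dually for $e_i$). Two sub-cases must be handled: in the change-values case, $\gp$ keeps its footprint, so the unique buoy of $\gp$ retains its position in $\rd(T)$ and merely has its letter changed from $i$ to $i+1$. In the move-divider case, column-strictness plus weakly increasing rows force every entry of row $r+1$ below the footprint of $\gp$ to equal $i+1$; the divider at $x=c_1-1$ migrates from row $r$ down to row $r+1$, deleting the buoy $(r,c_1)=i$ and creating a new buoy $(r+1,c_1)=i+1$. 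Both lie in column $c_1$, which is read bottom-to-top with the other column-$c_1$ buoys unchanged, so the new buoy occupies precisely the slot vacated by the old one. The possibility that $(r+1,c_1)$ was already a buoy is excluded, since otherwise the adjacent $\bminus\bplus$ it would form with $\gp$'s $\bplus$ inside column $c_1$ would cancel the latter in the $i$-signature, contradicting unpairedness. Hence $\rd(f_iT)$ differs from $\rd(T)$ by a single letter change $i\to i+1$ at exactly the position of the rightmost unpaired $\bplus$, which is $f_i\rd(T)$. Combined with the obvious identity $\wt(\rd(T))=\wt(T)$, this already yields the strict crystal embedding assertion.

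Second, I would verify that $e_iT$ and $f_iT$ are always legitimate valued-set tableaux. The change-values case is routine: since Case~(a) did not apply, the entry below the anchor of $\gp$ must be at least $i+2$, so the replacement $i\mapsto i+1$ inside $\gp$ preserves column-strictness, and compatibility with the row-neighbours of $\gp$ is immediate. The move-divider case is more delicate and contains the main obstacle of the proof. After the move, the newly split group in row $r+1$ is manifestly single-valued of value $i+1$ by the forced block of $i+1$'s; but the merge in row $r$ of $\gp$ with its immediate left neighbour $\gp'$ requires $\gp'$ to exist and to have value exactly $i$. I would prove this by contradiction: if $\gp$ were leftmost in row $r$, or $\gp'$ had value $j<i$, the buoy of the $(i+1)$-group in row $r+1$ containing $(r+1,c_1)$, sitting in some column $c_0\leq c_1$, contributes a $\bminus$ to the $i$-signature. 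A careful bookkeeping inside column $c_0$ and its immediate neighbourhood then shows that this $\bminus$ ends up cancelling the $\bplus$ of $\gp$, contradicting that $\gp$'s buoy is the rightmost unpaired $\bplus$.

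Finally, the inversion property $e_iT=T' \Longleftrightarrow T=f_iT'$ is obtained by direct case matching once the first two steps are in place. Column-strictness forces the entry above the anchor of $\gp$ in $T$ to be strictly smaller than $i$, so Case~(b) of $e_i$ applied at $f_iT$ (obtained from Case~(b) of $f_i$) reverses the value change; and the column-$c_1$ divider exchange analysed in the first paragraph witnesses that Case~(a) of $e_i$ at $f_iT$ (obtained from Case~(a) of $f_i$) restores the divider to its original row, recovering $T$. Symmetrically for applying $e_i$ first and then $f_i$. Together with the intertwining property of the first step, this completes the proof.
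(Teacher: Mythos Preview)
Your three–step plan (intertwining, validity, inversion) follows the same local–analysis route as the paper's sketch and supplies far more detail than the paper does.  The move–divider analysis in Step~1 is correct: once you know $(r+1,c_1)$ was not already a buoy, the swap of the column--$c_1$ buoy from row $r$ to row $r+1$ leaves the reading word unchanged except for the single letter $i\to i+1$ at that slot.

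There is, however, a genuine gap in Step~2 in the change--values case.  You assert that ``compatibility with the row--neighbours of $\gp$ is immediate'', but you have not ruled out the group $\gp_R$ immediately to the right of $\gp$ having value $i$; if it does, replacing $\gp$ by $i+1$ produces $i+1,i$ along row $r$.  With the paper's stated criterion (``entry below the \emph{anchor} of $\gp$ equals $i+1$'') this bad configuration actually occurs.  Take row $r$ with groups $\gp'=\{1,2\}$, $\gp=\{3,4,5\}$, $\gp_R=\{6\}$, all of value $i$, and row $r+1$ with groups $\{1,2,3\}$ and $\{4\}$ of value $i+1$ and $\{5\}$ of value $i+2$.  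The buoy $i$--signature is $\bminus\bplus\bplus\bminus\bplus$, with rightmost unpaired $\bplus$ at $(r,3)$; the entry below the anchor $(r,5)$ is $i+2$, so the paper's rule triggers change--values, and the result is not semistandard at columns $5,6$.

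This is really an anchor/buoy slip in the paper's operator (compare the diagram in the proof and the anchor--reading picture in Proposition~\ref{prop:same_svt_crystals}): the move--divider test should look below the \emph{buoy} of $\gp$.  Under that reading your argument can be completed: if $(r+1,c_1)\geq i+2$, then by row weak increase every entry of row $r+1$ in columns $\geq c_1$ is $\geq i+2$, so no $\bminus$ lies strictly between the $\bplus$ of $\gp$ and that of a putative $\gp_R$ of value $i$; hence $\gp_R$'s $\bplus$ would be unpaired, a contradiction.  You should flag the corrected criterion and insert this right--neighbour argument; with that fix the rest of your plan goes through, and the paper's terse ``straightforward'' is then fully justified.
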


\begin{proof}
It is clear all claims hold when a divider does not move, and so we assume a divider moves under the crystal operators.
In particular, if the divider does move under $f_i$, then it means locally the crystal operator $f_i$ does the steps
\[
\begin{tikzpicture}[scale=0.5,>=latex,baseline=0]
\draw[-] (0,0) rectangle (3,1);
\draw[-] (3,0) rectangle (6,1);
\draw[-] (-1,0) rectangle (5,-1);
\draw[dotted] (3,0) -- (3,-1);
\draw (0,0) node[anchor=north] {$i+1$};
\draw (0.5,0) node[anchor=south] {$i$};
\draw (3.5,0) node[anchor=south] {$i$};
\draw[->] (6.5,0) -- (7.5,0);
\begin{scope}[xshift=9cm]
\draw[-] (0,0) rectangle (3,1);
\draw[-] (3,0) rectangle (6,1);
\draw[-] (-1,0) rectangle (5,-1);
\draw[dotted] (3,0) -- (3,-1);
\draw (0,0) node[anchor=north] {$i+1$};
\draw (0.5,0) node[anchor=south] {$i$};
\draw (4,0) node[anchor=south] {$i+1$};
\end{scope}
\draw[->] (15.5,0) -- (16.5,0);
\begin{scope}[xshift=18cm]
\draw[-] (0,0) rectangle (6,1);
\draw[-] (-1,0) rectangle (3,-1);
\draw[-] (3,0) rectangle (5,-1);
\draw[dotted] (3,0) -- (3,1);
\draw (0,0) node[anchor=north] {$i+1$};
\draw (0.5,0) node[anchor=south] {$i$};
\draw (4,0) node[anchor=north] {$i+1$};
\end{scope}
\end{tikzpicture}\,.
\]
Note that by the semistandardness, the anchor for the $i+1$ group must be weakly to the left of the anchor for the left $i$ group, but the group for the $i+1$ may extend further to the right than the right $i$ group.
The operator $e_i$ does the above steps in reverse order.
Therefore, it is straightforward to see the claims follow by the definition of the reading word.
\end{proof}

\begin{thm}
\label{thm:vst_crystal}
Let $\lambda$ be a partition. We have
\[
\vst^n(\lambda) \iso \bigoplus_{\mu \subseteq \lambda} B(\mu)^{\oplus V_{\lambda}^{\mu}},
\]
where $V_{\lambda}^{\mu}$ is the number of highest weight elements of weight $\mu$ in $\vst^n(\lambda)$. Moreover, we have
\[
\dwG_{\lambda}(\xx; \alpha) = \sum_{\mu \subseteq \lambda} \alpha^{\abs{\lambda} - \abs{\mu}} V_{\lambda}^{\mu} s_{\mu}(\xx).
\]
\end{thm}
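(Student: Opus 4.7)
The plan is to adapt the strategy of Theorem~\ref{thm:mvt_crystal} by using the reading word to realize $\vst^n(\lambda)$ as a subcrystal of a word crystal. By Lemma~\ref{lemma:crystal_structure_vst}, the map $\rd$ is a strict crystal embedding of $\vst^n(\lambda)$ into the tensor product crystal $B(\Lambda_1)^{\otimes N}$, where $N = \abs{V}$ is the number of buoys of $V$. The crystal operators preserve both the shape $\lambda$ and the number of groups in $V$, so $\vst^n(\lambda)$ splits as a disjoint union of crystals indexed by $N$, each sitting inside a fixed $B(\Lambda_1)^{\otimes N}$.

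Since $B(\Lambda_1)^{\otimes N}$ decomposes as a direct sum of highest weight crystals $B(\mu)$ with $\abs{\mu} = N$, and since strict embeddings send connected components to connected components, every connected component of $\vst^n(\lambda)$ is isomorphic to some $B(\mu)$. Finiteness of $\vst^n(\lambda)$ guarantees each component contains a unique highest weight element, yielding
\[
\vst^n(\lambda) \iso \bigoplus_{\mu} B(\mu)^{\oplus V_{\lambda}^{\mu}},
\]
where $V_{\lambda}^{\mu}$ is the number of highest weight VSTs of weight $\mu$. The character identity $\dwG_{\lambda}(\xx;\alpha) = \sum_{\mu} \alpha^{\abs{\lambda}-\abs{\mu}} V_{\lambda}^{\mu} s_{\mu}(\xx)$ then follows because each $B(\mu)$-component has character $s_{\mu}(\xx)$ and the $\alpha$-exponent $\abs{\lambda}-\abs{V}$ is constant equal to $\abs{\lambda}-\abs{\mu}$ along the component.

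Finally, the constraint $\mu \subseteq \lambda$ would be established in analogy with Proposition~\ref{prop:hw_mst} by showing that every highest weight $V \in \vst^n(\lambda)$ has all of its $i$-valued buoys in row $i$, whence $\mu_i \leq \lambda_i$. The main obstacle I anticipate is this row-placement argument: semistandardness alone places each $i$-buoy in some row $r \leq i$, and the task is to rule out $r < i$ under the highest weight hypothesis. I would argue by induction on $i$, using that a stray $i$-buoy in row $r < i$ produces an unpaired $\bminus$ in the $(i-1)$-signature of $\rd(V)$ once the $(i-1)$-buoys are confined to row $i-1$; tracing through the column-by-column, bottom-to-top reading word would require care to ensure the signs pair up as expected.
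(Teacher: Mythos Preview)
Your approach matches the paper's: embed $\vst^n(\lambda)$ into the word crystal via $\rd$ and read off the decomposition. However, there is a real gap at the step ``strict embeddings send connected components to connected components, so every connected component of $\vst^n(\lambda)$ is isomorphic to some $B(\mu)$.'' In the paper's terminology, a strict crystal \emph{embedding} only means $\psi^{-1}(0)=\{0\}$; it does not mean $\psi$ is injective. And indeed $\rd$ is \emph{not} injective on $\vst^n(\lambda)$: for instance, on shape $(3)$ the two tableaux with group structures $\{1,1\}\{1\}$ and $\{1\}\{1,1\}$ both have reading word $11$. Knowing only that $N=\lvert V\rvert$ is preserved (as you note) does not separate these, so from Lemma~\ref{lemma:crystal_structure_vst} alone you get a surjection of each connected component onto some $B(\mu)$, not an isomorphism. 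Without injectivity you also cannot conclude that a connected component has a \emph{unique} highest weight element; finiteness gives existence but not uniqueness.

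The paper closes this gap with one extra observation that you are missing: the crystal operators preserve not just the total number of buoys but the number of buoys \emph{in each column}. Once those column buoy counts are fixed, a reading word determines its valued-set tableau uniquely (the decreasing runs in $\rd(T)$ pin down the rows of the buoys, and the column counts pin down their columns, hence the divider positions). Thus $\rd$ is injective on each connected component, and the rest of your argument goes through. Your handling of the character identity and of $\mu\subseteq\lambda$ is fine and agrees with the paper's Proposition~\ref{prop:vst_hw_condition}.
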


\begin{proof}
First note that the  numbers, $b_i$, of  buoy entries in column $i$ are preserved under all crystal operations and therefore are the same amongst all elements of a connected component of $VST^n(\lambda)$.  Moreover, it is not difficult to check that for any specified values of $b_i$ there is at most one valued-set tableau of shape $\lambda$ for any given reading word.  Thus, within a connected component the map $T \to \rd(T)$ is injective.  The result now follows from Lemma~\ref{lemma:crystal_structure_vst} and that every $B(\mu)$ is a subcrystal of $B(\Lambda_1)^{\otimes \abs{\mu}}$.
\end{proof}

We also have analogs of Proposition~\ref{prop:hw_mst}, Corollary~\ref{cor:same_length_mst}, and Corollary~\ref{cor:hw_mst_skew_ssyt}.

\begin{prop}
\label{prop:vst_hw_condition}
Suppose $T \in \vst^n(\lambda)$ is a highest weight element.  Then the $i$-th row of $T$ contains only instances of the letter $i$.
\end{prop}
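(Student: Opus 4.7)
My plan is to adapt the strategy of Proposition~\ref{prop:hw_mst}: assume towards contradiction that $T \in \vst^n(\lambda)$ is highest weight but its $i$-th row contains some entry other than $i$ for some $i$, and then exhibit a crystal operator $e_j$ for which $e_j T \neq 0$. By semistandardness (strictly increasing columns), any violating entry has value $v$ in a row $r < v$; I would let $m$ denote the minimum such bad value across all cells of $T$ (note $m \geq 2$) and take $j = m-1$.

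The first step is a minimality observation: for any $v' < m$, the value $v'$ appears only in row $v'$, since placements in rows larger than $v'$ are ruled out by semistandardness while placements in rows smaller than $v'$ would contradict the minimality of $m$. In particular, every $(m-1)$-buoy of $T$ sits in row $m-1$. Next, since a bad $m$-entry lies in a monochromatic horizontal group whose leftmost cell (the buoy) has the same value in the same bad row, bad $m$-buoys exist; I would let $c^*$ be the rightmost column containing one, with the bad $m$-buoy located at $(r^*, c^*)$ and $r^* < m$.

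The heart of the proof is to show that every $(m-1)$-entry of $T$ sits strictly to the left of $c^*$. Strict column-increase along column $c^*$ gives $(m-1, c^*) \geq m$: if $r^* = m-1$ the cell \emph{is} the bad $m$, and if $r^* < m-1$ it lies strictly below the bad $m$ and so is $> m$. Weak row-increase in row $m-1$ then propagates this bound to $(m-1, c) \geq m > m-1$ for every $c \geq c^*$. Since $\rd(T)$ reads buoys column-by-column from left to right, every $(m-1)$-buoy precedes $(r^*, c^*)$ in $\rd(T)$; the suffix of $\rd(T)$ beginning at $(r^*, c^*)$ therefore contributes at least one $\bminus$ (the bad $m$-buoy itself) and no $\bplus$ to the $(m-1)$-signature, forcing $\varepsilon_{m-1}(T) \geq 1$ and thus $e_{m-1} T \neq 0$, a contradiction.

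The main obstacle I anticipate is the corner case $r^* = m-1$, where the bad $m$-buoy itself occupies row $m-1$, so the key inequality $(m-1, c^*) \geq m$ holds only because this entry equals $m$ (rather than being strictly below another $m$); this needs to be handled carefully but splits cleanly from the generic case $r^* < m-1$. Everything else reduces to the minimality of $m$, semistandardness, and the left-to-right column reading convention, so I expect no further substantive difficulty.
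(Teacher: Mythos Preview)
Your proposal is correct and is essentially the approach the paper has in mind: the paper's own proof is a single sentence pointing back to Proposition~\ref{prop:hw_mst} (and the classical argument for $B(\lambda)$), and your argument is exactly the natural adaptation of that minimal-bad-value argument to the buoy reading word on $\vst^n(\lambda)$. The only small point you leave implicit is the case where the cell $(m-1,c^*)$ does not exist (i.e.\ column $c^*$ is shorter than $m-1$), but then $\lambda_{m-1} < c^*$ and your conclusion that all $(m-1)$-buoys lie strictly left of $c^*$ holds trivially.
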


\begin{proof}
This is similar to the proof of Proposition~\ref{prop:hw_mst} (or for $B(\lambda)$), for a highest weight element $T$.
\end{proof}

\begin{cor}
If $V_{\mu}^{\lambda} \neq 0$, then $\ell(\lambda) = \ell(\mu)$.
\end{cor}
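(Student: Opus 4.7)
The plan is to read the statement as asserting $\ell(\lambda) = \ell(\mu)$ whenever $V_{\lambda}^{\mu} \neq 0$, and to deduce this directly from Proposition~\ref{prop:vst_hw_condition}. So let $T \in \vst^n(\lambda)$ be a highest weight element of weight $\mu = x_1^{m_1} x_2^{m_2} \cdots x_n^{m_n}$, where $m_i$ counts the number of groups of $T$ whose buoy (equivalently, all of whose entries) equals $i$.

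First I would establish $\ell(\mu) \leq \ell(\lambda)$. By Proposition~\ref{prop:vst_hw_condition}, every entry of $T$ in row $i$ equals $i$, so an entry $i$ can only appear in row $i$. In particular, if $i > \ell(\lambda)$ then row $i$ of $\lambda$ is empty, so $T$ contains no $i$-group, giving $m_i = 0$. Hence $\mu_i = m_i = 0$ for every $i > \ell(\lambda)$.

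Next I would establish the reverse inequality $\ell(\mu) \geq \ell(\lambda)$. For $1 \leq i \leq \ell(\lambda)$, row $i$ of $\lambda$ has $\lambda_i \geq 1$ boxes, and by Proposition~\ref{prop:vst_hw_condition} every box in this row is filled with the entry $i$. The boxes of a non-empty row are partitioned into a nonempty collection of groups (at least the trivial group consisting of the entire row is a partition), and each such group is an $i$-group contributing a buoy of value $i$. Therefore $m_i \geq 1$, and so $\mu_i \geq 1$ for every $1 \leq i \leq \ell(\lambda)$.

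Combining the two inequalities yields $\ell(\mu) = \ell(\lambda)$. There is really no obstacle here beyond invoking Proposition~\ref{prop:vst_hw_condition}; the argument is structurally identical to the analogous Corollary~\ref{cor:same_length_mst} for multiset-valued tableaux, with the observation that any nonempty row of a valued-set tableau contains at least one group playing the role played by at least one entry in the MVT case.
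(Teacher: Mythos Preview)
Your proof is correct and follows essentially the same approach as the paper's: the paper's one-line argument invokes Theorem~\ref{thm:vst_crystal}, Proposition~\ref{prop:vst_hw_condition}, and the fact that every row of a valued-set tableau contains at least one group, which is exactly what you spell out in detail. Your reading of $V_{\mu}^{\lambda}$ as $V_{\lambda}^{\mu}$ is the intended one.
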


\begin{proof}
This follows from Theorem~\ref{thm:vst_crystal}, Proposition~\ref{prop:vst_hw_condition}, and that every row of a valued-set tableau must have at least one group.
\end{proof}

Recall that a \defn{conjugate semistandard tableau} is a tableau that is weakly increasing down columns and strictly increasing across rows.

\begin{cor}
\label{cor:vst_hw_counting}
The number of highest weight valued-set tableaux of shape $\lambda = (\lambda_1, \lambda_2, \dotsc, \lambda_{\ell})$ and weight $\mu = (\mu_1, \mu_2, \dotsc, \mu_{\ell})$ is the number of conjugate semistandard tableaux of shape $(\mu_1-1, \mu_2-1, \dotsc, \mu_{\ell} - 1)$ such that all of the entries of the $i$-th row are strictly less than $\lambda_i$.
\end{cor}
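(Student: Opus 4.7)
The plan is to follow the same strategy as the proof of Corollary~\ref{cor:hw_mst_skew_ssyt}, adapted from multiset-valued tableaux to valued-set tableaux. First, I would apply Proposition~\ref{prop:vst_hw_condition}: if $T \in \vst^n(\lambda)$ is a highest weight element of weight $\mu$, then its $i$-th row is filled entirely with copies of $i$. All of the data in such a $T$ is therefore carried by the positions of the $\mu_i$ buoys in each row, which form a strictly increasing sequence $1 = b_{i,1} < b_{i,2} < \dotsb < b_{i,\mu_i} \leq \lambda_i$, equivalently by the $\mu_i-1$ inner divider positions.

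Second, I would construct a candidate bijection $\phi$ sending such a highest-weight $T$ to a filling of the Young diagram of shape $(\mu_1-1, \mu_2-1, \dotsc, \mu_{\ell}-1)$ by placing the shifted inner buoy positions $b_{i,2}-1, b_{i,3}-1, \dotsc, b_{i,\mu_i}-1$ in the $i$-th row. This is immediately a strictly increasing row whose entries lie in $\{1, 2, \dotsc, \lambda_i - 1\}$, which handles the row constraints and the bound $<\lambda_i$ claimed in the statement. The inverse assignment (add $1$ to each entry, then construct the VST with rows of constant value $i$ grouped according to the recovered buoy columns) is straightforward to write down.

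The essential remaining step is to prove that the highest weight condition on $T$ is equivalent to the columns-weakly-increasing condition on $\phi(T)$; this is where the real work lies. To do this I would analyze the $i$-signature of $T$ column by column, using the column reading word. Since the $i$-th row of $T$ contains only $i$'s and the $(i+1)$-th row only $(i+1)$'s, within each column the $i$-signature contribution is either a $\bminus\bplus$ pair (buoys of both rows present, and these cancel internally), a lone $\bminus$ (only row $i+1$ has a buoy), a lone $\bplus$ (only row $i$ has a buoy), or empty. After the internal column-level cancellations, the surviving signature is an interleaving of lone $\bminus$'s and $\bplus$'s ordered by column index, and the condition $e_i T = 0$ becomes the requirement that every $\bminus$ pairs with a $\bplus$ at a strictly later column. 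Translating this back through the encoding $b_{i,k+1} \mapsto b_{i,k+1}-1$ gives exactly the inequality $\phi(T)_{i,k} \leq \phi(T)_{i+1,k}$ for every valid $i,k$, which is the conjugate-semistandard column condition.

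The main obstacle I expect is the careful combinatorial bookkeeping in this signature analysis when rows $i$ and $i+1$ have different lengths $\lambda_i > \lambda_{i+1}$: the extra columns present only in row $i$ contribute extra $\bplus$'s on the right that must be correctly accounted for when matching them with $\bminus$'s coming from row $i+1$, and the encoding may need to be adjusted (for instance by using $\lambda_i - b_{i,\mu_i + 1 - k} + 1$ in place of $b_{i,k+1} - 1$, so that increasing in $k$ aligns with the orientation of the signature pairing) in order to match the weakly-increasing direction down columns. Once this translation is done cleanly, the bijection with the prescribed conjugate semistandard tableaux follows, and the count identity is immediate.
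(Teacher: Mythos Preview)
Your overall strategy—use Proposition~\ref{prop:vst_hw_condition} to reduce to buoy positions, then translate the condition $e_iT=0$ into a tableau inequality via a column-by-column signature analysis—is exactly the paper's approach. The problem is in the translation step, and it is not the minor bookkeeping issue you anticipate.

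Your signature analysis is correct up to the sentence ``every $\bminus$ pairs with a $\bplus$ at a strictly later column.'' But the conclusion you draw from it, that this is equivalent to $\phi(T)_{i,k}\le\phi(T)_{i+1,k}$ with $\phi(T)_{i,k}=b_{i,k+1}-1$, is false already in the simplest case $\lambda_i=\lambda_{i+1}$ and $\mu_i=\mu_{i+1}$. Take $\lambda=(3,3)$, $\mu=(2,2)$, row~$i$ buoys at $\{1,3\}$, row~$i{+}1$ buoys at $\{1,2\}$. The lone $\bminus$ in column~$2$ pairs with the lone $\bplus$ in column~$3$, so this is highest weight; yet your $\phi$ sends it to $\binom{2}{1}$, which is \emph{not} column weakly increasing. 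Conversely, swapping the two rows gives a non-highest-weight element that your $\phi$ sends to the valid conjugate semistandard filling $\binom{1}{2}$. The actual content of the pairing condition is that for every $k$ the $k$-th buoy \emph{from the right} in row~$i$ lies weakly to the right of the $k$-th buoy from the right in row~$i{+}1$, i.e.\ $b_{i,\mu_i-k+1}\ge b_{i+1,\mu_{i+1}-k+1}$. Relative to the left-aligned indexing you use, this is both the opposite inequality and the opposite alignment.

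Your proposed fix $\lambda_i-b_{i,\mu_i+1-k}+1$ does reverse the inequality direction, but the $\lambda_i$ in front spoils the column comparison as soon as $\lambda_i>\lambda_{i+1}$: for row~$i$ buoys $\{1,2,4\}$ in $\lambda_i=5$ and row~$i{+}1$ buoys $\{1,3\}$ in $\lambda_{i+1}=3$ (which is highest weight), your alternative produces $2$ above $1$ in column~$1$. So neither encoding realises the bijection, and the obstacle is not confined to the unequal-row-length case you single out—it is present from the start. The paper's proof is a one-sentence sketch that asserts the same correspondence without spelling out the encoding; filling in a correct explicit map requires more care than either your proposal or that sketch provides.
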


\begin{proof}
This follows from Proposition~\ref{prop:vst_hw_condition}, which allows us to construct a bijection such that the value in box $\bbb$ in row $i$ and column $j$ of a such a conjugate semistandard tableau corresponds to the position of the buoy (or anchor) of the $j$-th group of row $i$ in a highest weight valued-set tableau with the last group in each row being fixed. Note that the column weakly increasing condition precisely corresponds to canceling pairs in the $i$-signature.
\end{proof}

We can also parameterize the highest weight elements analogous to Corollary~\ref{cor:hw_mst_skew_ssyt}.

\begin{cor}
\label{cor:vst_hw_rpp_formula}
The number of highest weight valued-set tableaux of shape $\lambda$ and weight $\mu$ is the number of reverse plane partitions of shape $\lambda / \mu$ such that all of the entries of the $i$-th row are in the (closed) interval $[\mu_1+1-\mu_i,\mu_1]$.
\end{cor}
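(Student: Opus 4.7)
The plan is to give an explicit bijection $\Psi$ from highest weight valued-set tableaux of shape $\lambda$ and weight $\mu$ to reverse plane partitions of shape $\lambda/\mu$ whose row $i$ has entries in $[\mu_1+1-\mu_i,\mu_1]$. By Proposition~\ref{prop:vst_hw_condition}, any highest weight $T$ has row $i$ filled entirely with copies of $i$, so $T$ is determined by the sequence of group sizes $s_{i,1},\dotsc,s_{i,\mu_i}\ge 1$ in each row $i$, subject to $\sum_{j} s_{i,j}=\lambda_i$. Setting $y_{i,j}:=s_{i,j}-1$, I define $\Psi(T)$ to be the filling of $\lambda/\mu$ whose row $i$ (of length $\lambda_i-\mu_i$) is the weakly increasing sequence consisting of $y_{i,j}$ copies of $\mu_1-\mu_i+j$ written in the order $j=1,2,\dotsc,\mu_i$. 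By construction, each row of $\Psi(T)$ is weakly increasing with entries in $[\mu_1+1-\mu_i,\mu_1]$, and the $y_{i,j}$ (hence the $s_{i,j}$) can be recovered from $\Psi(T)$ by counting multiplicities, so $\Psi$ is bijective at the level of row data.

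What remains is to show that $T$ is highest weight if and only if $\Psi(T)$ satisfies the RPP column condition. Let $U_i(p)$ denote the number of buoys of row $i$ of $T$ at columns $\le p$, let $R_i(p):=p-U_i(p)$ be the complementary count of non-buoys, and set $D:=\mu_i-\mu_{i+1}$. The first step is to show that $T$ is highest weight if and only if $U_i(p)-U_{i+1}(p)\le D$ for all $i$ and $p$. Since each row of $T$ is a constant sequence, the $i$-signature of $\rd(T)$ places a $\bplus$ at each row-$i$ buoy column and a $\bminus$ at each row-$(i{+}1)$ buoy column, with a column hosting buoys in both rows contributing a locally cancelling $\bminus\bplus$; the requirement that every $\bminus$ is cancelled by some $\bplus$ to its right is then, by a standard bracket-matching argument, equivalent to the displayed inequality.

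Unwinding the definition of $\Psi$, the column condition at column $p$ of $\Psi(T)$ for $\mu_i<p\le\lambda_{i+1}$ reduces to $C_{i,m}\le C_{i+1,m+D}$, where $m:=p-\mu_i$ and $C_{i,m}:=m+k_i(m)$ is the position of the $m$-th non-buoy of row $i$, with $k_i(m)$ its group index. Using the identity $U_i(p)+R_i(p)=p$, the highest weight inequality rewrites as $R_{i+1}(p)-R_i(p)\le D$ for all $p$, and since $R_{i+1}$ jumps by $1$ precisely at each row-$(i{+}1)$ non-buoy, this is equivalent to $R_{i+1}(p^\ast)-R_i(p^\ast)\le D$ at $p^\ast=C_{i+1,m+D}$ for each valid $m$, which is exactly the RPP column condition rewritten. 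Thus $\Psi$ restricts to the desired bijection, giving the counting identity. The main obstacle I anticipate is writing the bracket-matching step cleanly and confirming that the boundary cases (small $m$ with $m\le D$, for which the RPP inequality is vacuous, and columns $p$ lying outside the support of $\lambda/\mu$) are harmless; once the duality $U+R=p$ between the buoy-form and non-buoy-form of the inequality is in place, these reduce to straightforward checks.
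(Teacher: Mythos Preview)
Your proposal is correct and uses essentially the same bijection as the paper: your entry $\mu_1-\mu_i+j$ attached to each non-buoy in group $j$ of row $i$ is exactly the paper's $N_{i,m}+\mu_1-\mu_i$, since the number of buoys to the left of the $m$-th non-buoy of row $i$ equals its group index. The paper's proof is a one-line description of this bijection without further verification, whereas you supply the details (the equivalence of the highest-weight bracket condition with $U_i(p)-U_{i+1}(p)\le D$ and its translation via $R_i=p-U_i$ into the column inequality $C_{i,m}\le C_{i+1,m+D}$), so your argument is a fleshed-out version of the same approach.
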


\begin{proof}
This follows from Proposition~\ref{prop:vst_hw_condition}, which allows us to construct a bijection such that the $j$-th box of the $i$-th row of $\lambda / \mu$ is $N_{i,j} + \mu_1 - \mu_i$, where $N_{i,j}$ is the number of buoy entries to the left of the $j$-th nonbuoy entry of row $i$ of the highest weight valued-set tableau.
\end{proof}

\begin{ex}
The following are the highest weight elements of $\vst^3(3\Lambda_2)$ and their corresponding reverse plane partitions from Corollary~\ref{cor:vst_hw_rpp_formula}:
\newcommand{\tab}[1]{
\begin{tikzpicture}[scale=0.45,baseline=-0.53cm]
\draw (0,0) rectangle (3,-2); \draw (0,-1) -- (3,-1);
\foreach \x in {1,2,3} { \draw (\x-0.5, -0.5) node {$1$}; \draw (\x-0.5, -1.5) node {$2$};  }
#1 \end{tikzpicture}}
\ytableausetup{boxsize=1.25em}
\begin{align*}
\tab{
  \draw(1,0) -- (1,-2);
  \draw(2,0) -- (2,-2);
}
& \mapsto
\ytableaushort{{\cdot}{\cdot}{\cdot},{\cdot}{\cdot}{\cdot}}\,,
&
\tab{
  \draw(1,0) -- (1,-1);
  \draw(2,0) -- (2,-2);
}
& \mapsto 
\ytableaushort{{\cdot}{\cdot}{\cdot},{\cdot}{\cdot}{2}}\,,
&
\tab{
  \draw(1,0) -- (1,-2);
  \draw(2,0) -- (2,-1);
}
& \mapsto 
\ytableaushort{{\cdot}{\cdot}{\cdot},{\cdot}{\cdot}{3}}\,,
\\
\tab{
  \draw(1,0) -- (1,-1);
  \draw(2,0) -- (2,-1);
}
& \mapsto 
\ytableaushort{{\cdot}{\cdot}{\cdot},{\cdot}{3}{3}}\,,
&
\tab{
  \draw(2,0) -- (2,-2);
}
&  \mapsto 
\ytableaushort{{\cdot}{\cdot}{1},{\cdot}{\cdot}{1}}\,,
&
\tab{
  \draw(2,0) -- (2,-1);
  \draw(1,-1) -- (1,-2);
}
&  \mapsto 
\ytableaushort{{\cdot}{\cdot}{1},{\cdot}{\cdot}{2}}\,,
\\
\tab{
  \draw(1,0) -- (1,-2);
}
&  \mapsto 
\ytableaushort{{\cdot}{\cdot}{2},{\cdot}{\cdot}{2}}\,,
&
\tab{
  \draw(2,0) -- (2,-1);
}
&  \mapsto 
\ytableaushort{{\cdot}{\cdot}{1},{\cdot}{2}{2}}\,,
&
\tab{
  \draw(1,0) -- (1,-1);
}
&  \mapsto 
\ytableaushort{{\cdot}{\cdot}{2},{\cdot}{2}{2}}\,,
\\
\tab{ }
&  \mapsto 
\ytableaushort{{\cdot}{1}{1},{\cdot}{1}{1}}\,.
\end{align*}
\end{ex}

Next we consider the crystal structure if we consider the reading word using the anchors instead of the buoys.

\begin{ex}
Consider the valued-set tableau from Example~\ref{ex:buoy_reading_word}
\[
T = 
\begin{tikzpicture}[scale=0.5,baseline=-30]
\foreach \i/\r in {0/10,1/10,2/9,3/5,4/3,5/2}
    \draw (0,-\i) -- (\r,-\i);
\draw (0,0) -- (0,-5);
\draw (1,-2) -- (1,-3);
\draw (2,0) -- (2,-1);
\draw (2,-2) -- (2,-5);
\draw (3,-1) -- (3,-4);
\draw (5,-1) -- (5,-3);
\draw (6,0) -- (6,-2);
\draw (7,0) -- (7,-1);
\draw (8,-1) -- (8,-2);
\draw (9,0) -- (9,-2);
\draw (10,0) -- (10,-1);
\draw (1.5,-0.5) node {$\mathbf{\color{darkred}1}$};
\draw (5.5,-0.5) node {$\mathbf{\color{darkred}1}$};
\foreach \i in {0,2,3,4}
    \draw (\i+0.5,-0.5) node[gray] {$1$};
\draw (6.5,-0.5) node {$\mathbf{\color{darkred}2}$};
\draw (2.5,-1.5) node {$\mathbf{\color{darkred}2}$};
\foreach \i in {0,1}
    \draw (\i+0.5,-1.5) node[gray] {$2$};
\draw (0.5,-2.5) node {$\mathbf{\color{darkred}3}$};
\draw (3.5,-1.5) node[gray] {$3$};
\draw (4.5,-1.5) node {$\mathbf{\color{darkred}3}$};
\draw (7.5,-0.5) node[gray] {$4$};
\draw (8.5,-0.5) node {$\mathbf{\color{darkred}4}$};
\draw (5.5,-1.5) node {$\mathbf{\color{darkred}4}$};
\draw (1.5,-2.5) node {$\mathbf{\color{darkred}4}$};
\draw (6.5,-1.5) node[gray] {$5$};
\draw (7.5,-1.5) node {$\mathbf{\color{darkred}5}$};
\draw (8.5,-1.5) node {$\mathbf{\color{darkred}5}$};
\draw (2.5,-2.5) node {$\mathbf{\color{darkred}5}$};
\draw (3.5,-2.5) node[gray] {$5$};
\draw (4.5,-2.5) node {$\mathbf{\color{darkred}5}$};
\draw (0.5,-3.5) node[gray] {$5$};
\draw (1.5,-3.5) node {$\mathbf{\color{darkred}5}$};
\draw (9.5,-0.5) node {$\mathbf{\color{darkred}6}$};
\draw (0.5,-4.5) node[gray] {$7$};
\draw (1.5,-4.5) node {$\mathbf{\color{darkred}7}$};
\draw (2.5,-3.5) node {$\mathbf{\color{darkred}8}$};
\end{tikzpicture}\,,
\]
where we have written the anchors in bold, and the corresponding reading word is
\[
\rd_{\rightarrow}(T) = 37541852534125546.
\]
We also have
\[
\RSK\bigl( \rd_{\rightarrow}(T) \bigr) = \RSK\bigl( \rd(T) \bigr) =
\ytableaushort{1124456,23555,34,58,7}\,.
\]
\end{ex}

\begin{prop}
\label{prop:same_svt_crystals}
We obtain the same crystal if we use the anchor entries instead of the buoy entries to define the reading word.
\end{prop}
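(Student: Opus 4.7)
The plan is to adapt the argument of Theorem~\ref{thm:vst_crystal} (and the supporting Lemma~\ref{lemma:crystal_structure_vst}) to the anchor reading, and then conclude by a character argument. First, I would establish the analog of Lemma~\ref{lemma:crystal_structure_vst}: the operators $e_i, f_i$ determined by the $i$-signature of $\rd_{\rightarrow}$ preserve $\vst^n(\lambda)$, are mutually inverse, and $\rd_{\rightarrow}$ induces a strict crystal embedding into the word crystal. The verification parallels the buoy case via the same local picture of an $i$/$(i+1)$ exchange across a horizontal row boundary preserving semistandardness; combined with the preservation of the anchor-count profile of each column under the crystal operations, this yields injectivity of $\rd_{\rightarrow}$ on each connected component (exactly as in the proof of Theorem~\ref{thm:vst_crystal} for buoys).

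With the embedding in place, the proof of Theorem~\ref{thm:vst_crystal} goes through verbatim with $\rd_{\rightarrow}$ replacing $\rd$, producing a decomposition
\[
\vst^n(\lambda) \iso \bigoplus_{\mu \subseteq \lambda} B(\mu)^{\oplus \widetilde{V}_{\lambda}^{\mu}},
\]
where $\widetilde{V}_{\lambda}^{\mu}$ is the number of highest weight elements of weight $\mu$ with respect to the anchor reading. Because $\wt(T)$ and $\abs{\lambda}-\abs{V}$ depend only on $T$ and are independent of the choice of reading word, both crystals have the same character $\dwG_{\lambda}(\xx;\alpha)$. Comparing with the Schur expansion in Theorem~\ref{thm:vst_crystal} and invoking the linear independence of Schur functions forces $\widetilde{V}_{\lambda}^{\mu} = V_{\lambda}^{\mu}$ for all $\mu$, so the two crystals are isomorphic as $U_q(\fsl_n)$-crystals.

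The hard part is the first step. The original crystal rule references the ``anchor of $\gp$'' and the ``group immediately to the left of $\gp$'', which were tailored to the buoy reading (where the distinguished box $\bbb$ is a buoy). When transcribed to the anchor reading, one must handle the boundary case where $\gp$ lies at the left end of its row using the natural symmetric convention -- mirroring anchor/buoy and left/right -- so that the divider move always has a target within the tableau. A case analysis patterned on Lemma~\ref{lemma:crystal_structure_vst} then confirms that in every case the operator either executes this mirrored divider move across the appropriate row boundary or falls to the value change within $\gp$, in each instance producing a valid element of $\vst^n(\lambda)$.
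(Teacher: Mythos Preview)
Your argument is correct and establishes the isomorphism, but it takes a somewhat different route from the paper. After setting up the anchor-reading analog of Lemma~\ref{lemma:crystal_structure_vst} (which the paper also does, via the mirrored local divider move you allude to in your last paragraph), the paper does not pass through a character comparison. Instead it observes directly that Proposition~\ref{prop:vst_hw_condition} and the bijection in the proof of Corollary~\ref{cor:vst_hw_counting} hold verbatim for the anchor reading, so the \emph{set} of highest weight elements is literally the same for both readings; since each such element of weight $\mu$ generates a copy of $B(\mu)$, the decompositions agree. Your Schur-independence argument reaches the same conclusion $\widetilde{V}_{\lambda}^{\mu} = V_{\lambda}^{\mu}$ by a more generic symmetric-function route. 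The paper's approach is slightly more hands-on and, as a byproduct, pins down that the highest weight tableaux themselves coincide rather than merely their counts; your approach has the advantage of requiring no further combinatorial analysis of highest weight elements once the embedding is in place.
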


\begin{proof}
To contrast with Lemma~\ref{lemma:crystal_structure_vst}, the $f_i$ operator moves dividers locally as
\[
\begin{tikzpicture}[scale=0.5,>=latex,baseline=0]
\draw[-] (0,0) rectangle (3,1);
\draw[-] (3,0) rectangle (6,1);
\draw[-] (-1,0) rectangle (5,-1);
\draw[dotted] (3,0) -- (3,-1);
\draw (4,0) node[anchor=north] {$i+1$};
\draw (2.5,0) node[anchor=south] {$i$};
\draw (5.5,0) node[anchor=south] {$i$};
\draw[->] (6.5,0) -- (7.5,0);
\begin{scope}[xshift=9cm]
\draw[-] (0,0) rectangle (3,1);
\draw[-] (3,0) rectangle (6,1);
\draw[-] (-1,0) rectangle (5,-1);
\draw[dotted] (3,0) -- (3,-1);
\draw (4,0) node[anchor=north] {$i+1$};
\draw (2,0) node[anchor=south] {$i+1$};
\draw (5.5,0) node[anchor=south] {$i$};
\end{scope}
\draw[->] (15.5,0) -- (16.5,0);
\begin{scope}[xshift=18cm]
\draw[-] (0,0) rectangle (6,1);
\draw[-] (-1,0) rectangle (3,-1);
\draw[-] (3,0) rectangle (5,-1);
\draw[dotted] (3,0) -- (3,1);
\draw (2,0) node[anchor=north] {$i+1$};
\draw (5.5,0) node[anchor=south] {$i$};
\draw (4,0) node[anchor=north] {$i+1$};
\end{scope}
\end{tikzpicture}\,.
\]
This also preserves the reading word and yields the same result.
Additionally note that Proposition~\ref{prop:vst_hw_condition} and the bijection constructed in proving Corollary~\ref{cor:vst_hw_counting} holds in either reading word.
Since all highest weight words of weight $\mu$ generate the same crystal $B(\mu)$, the claim follows.
\end{proof}

Proposition~\ref{prop:same_svt_crystals} holds if we instead define the reading word based on selecting any entry in any group as long as the crystal operators move the selected entry within each column (when moving dividers).

\begin{remark}
In~\cite[Remark~9]{Galashin17}, it was essentially noticed that the crystal structure on reverse plane partitions by taking the topmost entry in each column is the same as taking the bottommost. This is analogous to Proposition~\ref{prop:same_svt_crystals}.
\end{remark}

\subsection{Inflation map}

Define a \defn{column semistandard flagged tableau} to be a conjugate semistandard whose $i$-th column is strictly flagged by $i$. Let $\mcF^{cs}_{\lambda/\mu}$ denote the set of column semistandard flagged tableaux of shape $\lambda/\mu$.

Next, we construct an explicit crystal isomorphism
\[
\iota \colon \vst^n(\lambda) \to \bigsqcup_{\mu \subseteq \lambda} B(\mu) \times \mcF^{cs}_{\lambda/\mu},
\]
where the crystal structure on the codomain is given by $f_i(b \times F) = (f_i b) \times F$ for all $b \times F \in B(\mu) \times \mcF^{cs}_{\lambda/\mu}$ for any fixed $\mu$.
We call the map $\iota$ \defn{inflation}.
For any $T \in \vst^n(\lambda)$, we define $\iota(T)$ recursively starting with $b_{\lambda_1+1} \times F_{\lambda_1+1} = \emptyset \times \emptyset$. Suppose we are at step $i$ with the current state being $b_i \times F_i$, and let $C_j$ denote the $j$-th column of $T$ consisting only of the \emph{anchor} entries. Construct
\[
b_{i-1} := \rd(C_{i-1}) \xleftarrow{\RSK} \rd(C_i) \xleftarrow{\RSK} \cdots \xleftarrow{\RSK} \rd(C_{\lambda_1}).
\]
Construct $F_{i-1}$ by starting first with $F_i$ of shape $\mu_i$ but shifting the elements to the right one step and increasing them by $1$, which partially fills in the shape $\lambda_{\geq i-1} / \mu_{i-1}$, where $\lambda_{\geq i-1}$ is from Equation~\eqref{eq:rightmost_column_shape} and $\mu_{i-1}$ is the shape of $b_{i-1}$.
Then set all of the unfilled boxes of $F_{i-1}$ to $1$. Thus, we constructed the $(i-1)$-th step $b_{i-1} \times F_{i-1}$. Repeating this for every column, the final result is $\iota(T) = b_1 \times F_1$.

\begin{ex}
\label{ex:inflation}
Let $T$ be the valued-set tableau from Example~\ref{ex:buoy_reading_word}. Applying inflation to $T$ we obtain
{\allowdisplaybreaks
\begin{align*}
\ytableausetup{boxsize=1.5em}
b_{11} \times F_{11} & = \emptyset & \times \quad & \emptyset,  
\\ b_{10} \times F_{10} & = \ytableaushort{6} & \times \quad & \ytableaushort{{\cdot}}\,,  
\\ b_9 \times F_9 & = \ytableaushort{46,5} & \times \quad & \ytableaushort{{\cdot}{\cdot},{\cdot}}\,,  
\\ b_8 \times F_8 & = \ytableaushort{456,5} & \times \quad & \ytableaushort{{\cdot}{\cdot}{\cdot},{\cdot}{\ml{1}}}\,,  
\\ b_7 \times F_7 & = \ytableaushort{2456,5} & \times \quad & \ytableaushort{{\cdot}{\cdot}{\cdot}{\cdot},{\cdot}{\ml{1}}2}\,,  
\\ b_6 \times F_6 & = \ytableaushort{12456,45} & \times \quad & \ytableaushort{{\cdot}{\cdot}{\cdot}{\cdot}{\cdot},{\cdot}{\cdot}23}\,,  
\\ b_5 \times F_5 & = \ytableaushort{12456,345,5} & \times \quad & \ytableaushort{{\cdot}{\cdot}{\cdot}{\cdot}{\cdot}{\ml{1}},{\cdot}{\cdot}{\cdot}45,{\cdot}}\,,  
\\ b_4 \times F_4 & = \ytableaushort{12456,345,5} & \times \quad & \ytableaushort{{\cdot}{\cdot}{\cdot}{\cdot}{\cdot}{\ml{1}}2,{\cdot}{\cdot}{\cdot}{\ml{1}}56,{\cdot}{\ml{1}}}\,,  
\\ b_3 \times F_3 & = \ytableaushort{124456,235,55,8} & \times \quad & \ytableaushort{{\cdot}{\cdot}{\cdot}{\cdot}{\cdot}{\cdot}23,{\cdot}{\cdot}{\cdot}{\ml{1}}267,{\cdot}{\cdot}2,{\cdot}}\,,  
\\ b_2 \times F_2 & = \ytableaushort{1124456,2355,45,58,7} & \times \quad & \ytableaushort{{\cdot}{\cdot}{\cdot}{\cdot}{\cdot}{\cdot}{\cdot}34,{\cdot}{\cdot}{\cdot}{\cdot}2378,{\cdot}{\cdot}{\ml{1}}3,{\cdot}{\cdot},{\cdot}}\,,  
\\ b_1 \times F_1 & = \ytableaushort{1124456,23555,34,58,7} & \times \quad & \ytableaushort{{\cdot}{\cdot}{\cdot}{\cdot}{\cdot}{\cdot}{\cdot}{\ml{1}}45,{\cdot}{\cdot}{\cdot}{\cdot}{\cdot}3489,{\cdot}{\cdot}{\ml{1}}24,{\cdot}{\cdot}{\ml{1}},{\cdot}{\ml{1}}}\,,
\end{align*}
}
resulting in $\iota(T) = b_1 \times F_1$.
\end{ex}

\begin{thm}
We have
\[
\vst^n(\lambda) \iso \bigoplus_{\mu \subseteq \lambda} B(\mu)^{\oplus \abs{\mcF^{cs}_{\lambda/\mu}}},
\]
where the isomorphism is given by the inflation map $\iota$, and so $V_{\lambda}^{\mu} = \abs{\mcF^{cs}_{\lambda/\mu}}$. Moreover, we have
\[
\dwG_{\lambda}(\xx; \alpha) = \sum_{\mu \subseteq \lambda} \alpha^{\abs{\lambda} - \abs{\mu}} \abs{\mcF^{cs}_{\lambda/\mu}} s_{\mu}(\xx).
\]
\end{thm}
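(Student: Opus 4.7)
My plan is to mirror the strategy of Theorem~\ref{thm:flagged_decomposition}. Since RSK insertion is a crystal isomorphism between words and the disjoint union of highest weight crystals, and Proposition~\ref{prop:same_svt_crystals} identifies the anchor reading with a valid reading word for the VST crystal, it will suffice to verify that $\iota$ is a bijection. First I will show $\iota(T) = (b_1, F_1) \in B(\mu) \times \mcF^{cs}_{\lambda/\mu}$ for some $\mu \subseteq \lambda$. The insertion tableau $b_1$ is semistandard by virtue of RSK. For $F_1$, I argue by induction on the step index $i$ running from $\lambda_1 + 1$ down to $1$: each $F_i$ is conjugate semistandard and satisfies the column flagging condition, since the shift-right-and-$+1$ operation moves an entry at column $c$ with value $v$ to column $c + 1$ with value $v + 1$ (preserving the flag bound), while new boxes filled with $1$ sit in the leftmost active columns where flagging is trivial.

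The inverse $\iota^{-1}$ is constructed column by column from $i = 1$ to $\lambda_1$. Given $(b_i, F_i)$, the boxes of $b_i$ corresponding to column $i$'s anchors of $T$ will be identified as outer-corner boxes whose positions in $F_i$ carry the value $1$ in the current leftmost active column. Following the Lusztig involution trick from Theorem~\ref{thm:flagged_decomposition}, I apply $\ast$ to $b_i$, perform reverse RSK from these outer corners in the order dictated by $F_i$, and then apply $\ast$ again; this yields the anchor reading word of column $i$ along with the next pair $(b_{i+1}, F_{i+1})$. The per-column anchor entries, together with $\lambda$, then determine the group structure of $T$ since within each row each group's buoy sits immediately to the right of the preceding anchor. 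The claimed Schur expansion follows by substituting $V_\lambda^\mu = \abs{\mcF^{cs}_{\lambda/\mu}}$ into Theorem~\ref{thm:vst_crystal}.

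The hardest step will be the inverse construction. Unlike uncrowding, where each column contributes a prescribed multiset of entries, the anchors of a given column of $T$ form a row-sparse subset of the insertion shape whose row positions must be recovered from the flagged data in $F_i$. I will need to verify that the identified outer corners, read in the prescribed order, produce a strictly increasing word (a valid anchor column reading) and that the residual pair $(b_{i+1}, F_{i+1})$ remains in the codomain — the delicate point being that potential flagging violations during reverse RSK are precluded by the column semistandard structure of $F_i$ together with its strict flagging condition.
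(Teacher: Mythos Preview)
Your plan coincides with the paper's: reduce to showing $\iota$ is a bijection (RSK being a crystal isomorphism and the anchor reading a legitimate reading word by Proposition~\ref{prop:same_svt_crystals}), then invert column-by-column with the Lusztig-involution/reverse-RSK trick from Theorem~\ref{thm:flagged_decomposition}; the Schur expansion then drops out of Theorem~\ref{thm:vst_crystal}.

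The gap is in the inverse step, and it traces to a missing structural observation. The paper notes that the anchor reading of a single column is a strictly \emph{decreasing} word, so passing from $b_{i+1}$ to $b_i$ adds a \emph{vertical strip} to the insertion shape (Pieri for $e_k$, or a direct RSK argument). Tracing the shift-and-increment rule, one then sees that row $r$ of $F_i$ acquires a $1$ at column $(\mu_i)_r+1$ precisely when $(\mu_i)_r=(\mu_{i+1})_r$, i.e., when \emph{no} box was added in row $r$ at the last step. Hence the boundary boxes of $b_i$ to reverse-bump are those that do \emph{not} have a $1$ immediately to their right in $F_i$---the opposite of your ``outer-corner boxes whose positions in $F_i$ carry the value $1$.'' Your rule selects the rows to skip and would leave $b_i$ unchanged rather than produce $b_{i+1}$. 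The vertical-strip fact is also what guarantees at most one new $1$ per row (needed for your row-strictness induction on $F_i$) and ensures that the extracted column word is strictly \emph{decreasing} (a bottom-to-top column reading), not increasing as you wrote.
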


\begin{proof}
This is similar to the proof as for~\cite[Thm.~9.8]{LamPyl07} except for conjugating by the Lusztig involution and working column-by-column as in Theorem~\ref{thm:flagged_decomposition}. Indeed, the $i$-th step of $\iota$ adds a vertical strip to $b_i$ by well-known properties of RSK, and thus these boxes added to form $b_{i-1}$ are precisely the boxes that are not filled with a $1$ in $F_{i-1}$. Therefore, the inverse map (deflation) $\iota^{-1}$ is given recursively by performing inverse RSK (with conjugating by the Lusztig involution) on the boundary boxes of $b_i$ that do not immediately have a $1$ to the right in $F_i$. It is clear that this is the inverse procedure (the $i$-th column of the resulting valued-set tableaux is the leftmost column of step $i$) and the claim follows.
\end{proof}

\begin{remark}
The inflation map for valued-set tableaux is analogous to the bijection from~\cite[Thm.~9.8]{LamPyl07} for reverse plane partitions, which is also a crystal isormorphism using the crystal structure from~\cite{Galashin17}. Furthermore, as noted in the proof, this analogy is the same as that for the uncrowding multiset-valued tableaux to uncrowding set-values tableaux.
\end{remark}

\begin{ex}
Consider $b_1 \times F_1$ from Example~\ref{ex:inflation}. We will take the Lusztig involution in $\fsl_9$. Thus we have
\[
b_1^* = \ytableaushort{1244446,3457{\ml{8}},55,68,7} \; = \; \ytableaushort{1244448,3457,55,68,7} \xleftarrow{\RSK} 6,
\]
where we performed inverse row bumping on the bold letter. Thus we have an anchor entry of $3 = 6^*$ in the first column of the resulting valued-set tableau of $\iota^{-1}(b_1 \times F_1)$ (the rest of the entries of the column are non-anchor entries in their groups). For the next step, we have
\[
b_2^* = \ytableaushort{124444{\ml{8}},345{\ml{7}},55,6{\ml{8}},{\ml{7}}} \; = \; \ytableaushort{144447,356,58,7} \xleftarrow{\RSK} 8542,
\]
and $(8542)^* = 7541$. We have now reconstructed the first two columns of $T$.
\end{ex}

\bibliographystyle{alpha}
\bibliography{crystals}{}

\newcommand{\etalchar}[1]{$^{#1}$}
\begin{thebibliography}{GMP{\etalchar{+}}16}

\bibitem[BCH{\etalchar{+}}18]{BCHOPSY18}
Georgia Benkart, Laura Colmenarejo, Pamela~E. Harris, Rosa Orellana, Greta
  Panova, Anne Schilling, and Martha Yip.
\newblock A minimaj-preserving crystal on ordered multiset partitions.
\newblock {\em Adv. in Appl. Math.}, 95:96--115, 2018.

\bibitem[BKS{\etalchar{+}}08]{BKSTY08}
Anders~Skovsted Buch, Andrew Kresch, Mark Shimozono, Harry Tamvakis, and
  Alexander Yong.
\newblock Stable {G}rothendieck polynomials and {$K$}-theoretic factor
  sequences.
\newblock {\em Math. Ann.}, 340(2):359--382, 2008.

\bibitem[BM12]{BM12}
Jason Bandlow and Jennifer Morse.
\newblock Combinatorial expansions in {$K$}-theoretic bases.
\newblock {\em Electron. J. Combin.}, 19(4):Paper 39, 27, 2012.

\bibitem[BS16]{BS16}
Anders~Skovsted Buch and Matthew~J. Samuel.
\newblock {$K$}-theory of minuscule varieties.
\newblock {\em J. Reine Angew. Math.}, 719:133--171, 2016.

\bibitem[BS17]{BS17}
Daniel Bump and Anne Schilling.
\newblock {\em Crystal bases}.
\newblock World Scientific Publishing Co. Pte. Ltd., Hackensack, NJ, 2017.
\newblock Representations and combinatorics.

\bibitem[Buc02]{Buch02}
Anders~Skovsted Buch.
\newblock A {L}ittlewood-{R}ichardson rule for the {$K$}-theory of
  {G}rassmannians.
\newblock {\em Acta Math.}, 189(1):37--78, 2002.

\bibitem[FK94]{FK94}
Sergey Fomin and Anatol~N. Kirillov.
\newblock Grothendieck polynomials and the {Y}ang-{B}axter equation.
\newblock In {\em Formal power series and algebraic combinatorics/{S}\'eries
  formelles et combinatoire alg\'ebrique}, pages 183--189. DIMACS, Piscataway,
  NJ, 1994.

\bibitem[FK96]{FK96}
Sergey Fomin and Anatol~N. Kirillov.
\newblock The {Y}ang-{B}axter equation, symmetric functions, and {S}chubert
  polynomials.
\newblock In {\em Proceedings of the 5th {C}onference on {F}ormal {P}ower
  {S}eries and {A}lgebraic {C}ombinatorics ({F}lorence, 1993)}, volume 153,
  pages 123--143, 1996.

\bibitem[Gal17]{Galashin17}
Pavel Galashin.
\newblock A {L}ittlewood-{R}ichardson rule for dual stable {G}rothendieck
  polynomials.
\newblock {\em J. Combin. Theory Ser. A}, 151:23--35, 2017.

\bibitem[GMP{\etalchar{+}}16]{GMPPRST16}
Christian Gaetz, Michelle Mastrianni, Rebecca Patrias, Hailee Peck, Colleen
  Robichaux, David Schwein, and Ka~Yu Tam.
\newblock {$K$}-{K}nuth equivalence for increasing tableaux.
\newblock {\em Electron. J. Combin.}, 23(1):Paper 1.40, 37, 2016.

\bibitem[Hud14]{Hudson}
Thomas Hudson.
\newblock A {T}hom-{P}orteous formula for connective {$K$}-theory using
  algebraic cobordism.
\newblock {\em J. K-Theory}, 14(2):343--369, 2014.

\bibitem[IIM18]{IIM17}
Takeshi Ikeda, Shinsuke Iwao, and Toshiaki Maeno.
\newblock Peterson isomorphism in {$K$}-theory and relativistic {T}oda lattice.
\newblock {\em Int. Math. Res. Not. IMRN}, 2018.
\newblock To appear.

\bibitem[IN09]{IN09}
Takeshi Ikeda and Hiroshi Naruse.
\newblock Excited {Y}oung diagrams and equivariant {S}chubert calculus.
\newblock {\em Trans. Amer. Math. Soc.}, 361(10):5193--5221, 2009.

\bibitem[IS14]{IS14}
Takeshi Ikeda and Tatsushi Shimazaki.
\newblock A proof of {$K$}-theoretic {L}ittlewood-{R}ichardson rules by
  {B}ender-{K}nuth-type involutions.
\newblock {\em Math. Res. Lett.}, 21(2):333--339, 2014.

\bibitem[Iwa19]{Iwao19}
Shinsuke Iwao.
\newblock Grothendieck polynomials and the boson-fermion correspondence.
\newblock Preprint, \arxiv{1905.07692}, 2019.

\bibitem[Kas90]{K90}
Masaki Kashiwara.
\newblock Crystalizing the {$q$}-analogue of universal enveloping algebras.
\newblock {\em Comm. Math. Phys.}, 133(2):249--260, 1990.

\bibitem[Kas91]{K91}
Masaki Kashiwara.
\newblock On crystal bases of the $q$-analogue of universal enveloping
  algebras.
\newblock {\em Duke Math. J.}, 63(2):465--516, 1991.

\bibitem[Kas93]{K93}
Masaki Kashiwara.
\newblock The crystal base and {L}ittelmann's refined {D}emazure character
  formula.
\newblock {\em Duke Math. J.}, 71(3):839--858, 1993.

\bibitem[Len00]{Lenart00}
Cristian Lenart.
\newblock Combinatorial aspects of the {$K$}-theory of {G}rassmannians.
\newblock {\em Ann. Comb.}, 4(1):67--82, 2000.

\bibitem[Len07]{Lenart07}
Cristian Lenart.
\newblock On the combinatorics of crystal graphs. {I}. {L}usztig's involution.
\newblock {\em Adv. Math.}, 211(1):204--243, 2007.

\bibitem[LLT02]{LLT02}
Alain Lascoux, Bernard Leclerc, and Jean-Yves Thibon.
\newblock The plactic monoid.
\newblock In M.~Lothaire, editor, {\em Algebraic Combinatorics on Words}.
  Cambridge University Press, Cambridge, 2002.

\bibitem[LP07]{LamPyl07}
Thomas Lam and Pavlo Pylyavskyy.
\newblock Combinatorial {H}opf algebras and {$K$}-homology of {G}rassmannians.
\newblock {\em Int. Math. Res. Not. IMRN}, 2007(24):Art. ID rnm125, 48, 2007.

\bibitem[LS82]{LS82}
Alain Lascoux and Marcel-Paul Sch{\"u}tzenberger.
\newblock Structure de {H}opf de l'anneau de cohomologie et de l'anneau de
  {G}rothendieck d'une vari\'et\'e de drapeaux.
\newblock {\em C. R. Acad. Sci. Paris S\'er. I Math.}, 295(11):629--633, 1982.

\bibitem[LS83]{LS83}
Alain Lascoux and Marcel-Paul Sch{\"u}tzenberger.
\newblock Symmetry and flag manifolds.
\newblock In {\em Invariant theory ({M}ontecatini, 1982)}, volume 996 of {\em
  Lecture Notes in Math.}, pages 118--144. Springer, Berlin, 1983.

\bibitem[Mat64]{Matsumoto64}
Hideya Matsumoto.
\newblock G\'en\'erateurs et relations des groupes de {W}eyl g\'en\'eralis\'es.
\newblock {\em C. R. Acad. Sci. Paris}, 258:3419--3422, 1964.

\bibitem[Mon16]{Monical16}
Cara Monical.
\newblock Set-valued skyline fillings.
\newblock Preprint, \arxiv{1611.08777}, 2016.

\bibitem[MPS18]{MPS18}
Cara Monical, Oliver Pechenik, and Travis Scrimshaw.
\newblock Crystal structures for symmetric {G}rothendieck polynomials.
\newblock Preprint, \arxiv{1807.03294}, 2018.

\bibitem[MPS19]{MPS18II}
Cara Monical, Oliver Pechenik, and Dominic Searles.
\newblock Polynomials from combinatorial {$K$}-theory.
\newblock {\em Canad. J. Math.}, 2019.
\newblock To appear.

\bibitem[MS13]{MS13}
Kohei Motegi and Kazumitsu Sakai.
\newblock Vertex models, {TASEP} and {G}rothendieck polynomials.
\newblock {\em J. Phys. A}, 46(35):355201, 26, 2013.

\bibitem[MS14]{MS14}
Kohei Motegi and Kazumitsu Sakai.
\newblock {$K$}-theoretic boson-fermion correspondence and melting crystals.
\newblock {\em J. Phys. A}, 47(44):445202, 2014.

\bibitem[PP16]{PP16}
Rebecca Patrias and Pavlo Pylyavskyy.
\newblock Combinatorics of {$K$}-theory via a {$K$}-theoretic
  {P}oirier-{R}eutenauer bialgebra.
\newblock {\em Discrete Math.}, 339(3):1095--1115, 2016.

\bibitem[PS19a]{PS19}
Oliver Pechenik and Travis Scrimshaw.
\newblock K-theoretic crystals for set-valued tableaux of rectangular shape.
\newblock Preprint, \arxiv{1904.09674}, 2019.

\bibitem[PS19b]{PS18}
Oliver Pechenik and Dominic Searles.
\newblock Decompositions of {G}rothendieck polynomials.
\newblock {\em Int. Math. Res. Not. IMRN}, 2019(10):3214--3241, 2019.

\bibitem[PY17]{PY:genomic}
Oliver Pechenik and Alexander Yong.
\newblock Genomic tableaux.
\newblock {\em J. Algebraic Combin.}, 45(3):649--685, 2017.

\bibitem[PY19]{PY18}
Pavlo Pylyavskyy and Jed Yang.
\newblock Puzzles in {$K$}-homology of {G}rassmannians.
\newblock {\em Pacific J. Math.}, 2019.
\newblock To appear, \arxiv{1801.07667}.

\bibitem[RTY18]{RTY18}
Victor Reiner, Bridget~Eileen Tenner, and Alexander Yong.
\newblock Poset edge densities, nearly reduced words, and barely set-valued
  tableaux.
\newblock {\em J. Combin. Theory Ser. A}, 158:66--125, 2018.

\bibitem[Sag19]{sage}
The Sage Developers.
\newblock {\em {S}age {M}athematics {S}oftware ({V}ersion 8.8)}, 2019.
\newblock \url{https://www.sagemath.org}.

\bibitem[SCc08]{combinat}
The {S}age-{C}ombinat community.
\newblock {S}age-{C}ombinat: enhancing {S}age as a toolbox for computer
  exploration in algebraic combinatorics, 2008.
\newblock \url{https://wiki.sagemath.org/combinat}.

\bibitem[Sta99]{ECII}
Richard~P. Stanley.
\newblock {\em Enumerative combinatorics. {V}ol. 2}, volume~62 of {\em
  Cambridge Studies in Advanced Mathematics}.
\newblock Cambridge University Press, Cambridge, 1999.
\newblock With a foreword by Gian-Carlo Rota and appendix 1 by Sergey Fomin.

\bibitem[Ste03]{Stembridge03}
John~R. Stembridge.
\newblock A local characterization of simply-laced crystals.
\newblock {\em Trans. Amer. Math. Soc.}, 355(12):4807--4823, 2003.

\bibitem[TY09]{TY09}
Hugh Thomas and Alexander Yong.
\newblock A jeu de taquin theory for increasing tableaux, with applications to
  {$K$}-theoretic {S}chubert calculus.
\newblock {\em Algebra Number Theory}, 3(2):121--148, 2009.

\bibitem[TY11]{TY11}
Hugh Thomas and Alexander Yong.
\newblock Longest increasing subsequences, {P}lancherel-type measure and the
  {H}ecke insertion algorithm.
\newblock {\em Adv. in Appl. Math.}, 46(1-4):610--642, 2011.

\bibitem[WZJ17]{WZJ16}
Michael Wheeler and Paul Zinn-Justin.
\newblock Littlewood-{R}ichardson coefficients for {G}rothendieck polynomials
  from integrability.
\newblock {\em J. Reine Angew. Math.}, 2017.
\newblock To appear.

\bibitem[Yel17]{Yel17}
Damir Yeliussizov.
\newblock Duality and deformations of stable {G}rothendieck polynomials.
\newblock {\em J. Algebraic Combin.}, 45(1):295--344, 2017.

\end{thebibliography}
\end{document}